\documentclass{amsart}
\usepackage{graphicx}
\usepackage{amsmath}
\usepackage{amscd}
\usepackage{mathtools}
\usepackage{orcidlink}
\usepackage{enumerate}
\usepackage{tikz-cd}
\usepackage[all,cmtip]{xy}
\usepackage{tikz}
\usepackage{amssymb}
\usepackage{ascmac} 
\usepackage{color}
\usepackage{autobreak}
\usepackage{breqn}
\newtheorem{dfn}{Definition}[section]
\newtheorem{thm}[dfn]{Theorem}
\newtheorem{pro}[dfn]{Proposition}
\newtheorem{lem}[dfn]{Lemma}

\theoremstyle{definition}
\newtheorem{exa}[dfn]{Example}

\newcommand{\mb}{\mathbb}
\newcommand{\mr}{\mathrm}

\newcommand{\mac}{\mathcal}

\title[Automorphisms with Fixed Loci of Codimension at Most Two]{Automorphisms of Smooth Hypersurfaces with Fixed Loci of Codimension at Most Two}
\author{Taro Hayashi}
\author{Ryoichi Suzuki\orcidlink{0000-0001-9979-1882}}
\address{(Taro Hayashi)
Department of Mathematical Sciences,
Ritsumeikan University,
1$-$1$-$1 Nojihigashi, Kusatsu, Shiga, 525$-$8577, Japan}
\email{haya4taro@gmail.com}
\address{(Ryoichi Suzuki)
Department of Business Economics, School of Management, Tokyo University of Science, 1$-$11$-$2 Fujimi, Chiyoda-ku, Tokyo 102$-$0071, Japan}
\email{rsuzukimath@gmail.com}

\date{\today}
\subjclass{Primary 14J50; Secondary 14L30, 14E20}
\keywords{Projective hypersurface; Automorphisms; Fixed points set}
\begin{document}
\maketitle
\begin{abstract}
We study automorphisms of smooth hypersurfaces in projective space $\mathbb{P}^{n+1}$ whose fixed loci have codimension at most two for $n\geq2$. While classifications of possible orders of automorphisms are known, our aim is to explore the relationship between the order of an automorphism and its algebraic and geometric properties. In this paper, we show that the assumption on the fixed locus restricts the possible orders of automorphisms. 
Moreover, when the fixed locus has codimension at most two, we investigate the rationality of quotient spaces associated with automorphisms whose orders are multiples of $d-1$ or $d$, where $d$ denotes the degree of the hypersurface.
\end{abstract}
\section{Introduction}
Throughout this paper, we work over an algebraically closed field $k$ of characteristic zero.
Automorphisms of algebraic varieties reflect intrinsic geometric symmetries and play a central role in algebraic geometry.
For smooth hypersurfaces in projective space, it is known that the automorphism group is finite and, under mild assumptions on the dimension and the degree, consists of projective linear transformations ([\ref{bio:acgh},\ref{bio:mm63}]).
However, beyond these general structural results, explicit descriptions of automorphism groups are available only in a limited number of special cases ([\ref{bio:bb16a},\ref{bio:bc25},\ref{bio:ou19},\ref{bio:wu20},\ref{bio:yz24}]).

An intermediate approach between abstract structural results and complete group-theoretic classifications is the study of the order of automorphisms.
The order provides a refined invariant.
Even when a description of the automorphism group is unavailable, it still allows us to detect symmetry phenomena.
For instance, this phenomenon is studied in [\ref{bio:bb16n},\ref{bio:th21o}].
Most existing studies on automorphism orders of hypersurfaces focus primarily on numerical constraints arising from the degree or the dimension
([\ref{bio:bb16n},\ref{bio:gl11},\ref{bio:gl13},\ref{bio:z22}]).
In particular, the possible orders of automorphisms are determined in
[\ref{bio:bb16n},\ref{bio:z22}]. When $n=1$, 
the orders are known very explicitly in [\ref{bio:bb16n}]. 
Thus, the orders themselves are already known from a numerical viewpoint, and throughout this paper we restrict our attention to the case $n \geq 2$.

In contrast to the existing literature, we incorporate geometric information into the study of automorphism orders by imposing conditions on the dimension of the fixed locus. 
More precisely, we consider automorphisms of smooth projective hypersurfaces whose fixed loci have small codimension. 
The fixed-locus condition restricts them and yields a more refined classification. A related viewpoint appears in~[\ref{bio:th21s},\ref{bio:th25a}], which treats finite automorphism groups acting freely. In particular, when the fixed locus is empty, the order of the automorphism divides the degree of the hypersurface.
Our first main result shows that, under the assumption that the fixed locus has codimension at most two, the possible orders are strongly constrained and can be explicitly determined.
\begin{thm}\label{1.1}
Let $\mac X \subset \mathbb P^{n+1}$ be a smooth hypersurface of degree $d$ for $n\geq2$, 
and let $g\in \mathrm{PGL}(n+2,k)$ be an automorphism of $\mac X$.

We assume that the fixed locus $\mathrm{Fix}(g)$ has codimension one in $\mathcal X$.
Then the order of $g$ divides
\[
d,\quad d-1,\quad \text{or}\quad d-2.
\]
Moreover, if the order of $g$ is at least $3$ and divides $d-2$, then $n=2$.

We assume that the fixed locus $\mathrm{Fix}(g)$ has codimension two in $\mathcal X$.
Then the following statements hold.
\begin{enumerate}[$(i)$]
\item If $n\geq 4$, then the order of $g$ divides
\[
(d-1)d,\quad (d-1)^2,\quad\mr{or}\quad (d-2)d.
\]
\item If $n=3$, then the order of $g$ divides
\[
(d-1)d,\quad (d-1)^2,\quad (d-2)d,\quad d^2-3d+3,\quad \mr{or}\quad (d-2)(d-1).
\]
\item If $n=2$, then the order of $g$ divides
\[
(d-1)^2d,\quad (d-1)^3,\quad (d^2-3d+3)d,\quad (d^2-3d+3)(d-1),
\]
\[
(d-2)(d-1)d,\quad \mr{or}\quad (d-2)(d-1)^2.
\]
\end{enumerate}
\end{thm}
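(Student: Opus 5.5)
We lift $g$ to a diagonal matrix $\mathrm{diag}(\zeta^{a_0},\dots,\zeta^{a_{n+1}})$ in coordinates $x_0,\dots,x_{n+1}$, where $\zeta$ is a primitive $m$-th root of unity, $m=\mathrm{ord}(g)$, and the defining polynomial $F$ is semi-invariant. The fixed locus of $g$ on $\mathbb P^{n+1}$ is the disjoint union of the linear subspaces $L_\lambda=\mathbb P(V_\lambda)$, where $V_\lambda$ is the eigenspace for eigenvalue $\lambda$. Moreover $\mathrm{Fix}(g)=\bigcup_\lambda (L_\lambda\cap\mathcal X)$.

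A linear space $L$ of dimension $r$ meets $\mathcal X$ in dimension $\ge r-1$, and $L\cap \mathcal X$ has dimension $r$ only if $L\subset\mathcal X$. A smooth hypersurface contains no linear space of dimension $>n/2$. Hence codimension one in $\mathcal X$ forces an eigenspace with $\dim L_\lambda=n$, i.e.\ $\dim V_\lambda=n+1$. So after rescaling, $g=\mathrm{diag}(1,\dots,1,\zeta^{a},\zeta^{b})$ with two exceptional eigenvalues, possibly equal. Codimension two forces either $\dim V_\lambda=n$ (so $\dim L_\lambda=n-1$), or $L_\lambda\subset\mathcal X$ of dimension $n-2$. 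The latter requires $n-2\le n/2$, i.e.\ $n\le4$, which explains the special cases in small $n$. Thus in the codimension-two case $g$ has one eigenvalue of multiplicity $n$ (or $n-1$ in the contained case) and at most three others.

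The main tool is smoothness. For each coordinate $x_i$, $F$ must contain a monomial of the form $x_i^{d}$ or $x_i^{d-1}x_j$. Otherwise the coordinate point $e_i$ lies on $\mathcal X$ and is singular there. More generally, if $F$ vanishes to order $\ge2$ along a coordinate linear subspace whose dimension is too large, we get singularities. Write $F=\sum_k F_k(y)\,G_k(x)$, where $x$ are the coordinates of the big eigenspace and $y$ the remaining two to four coordinates. Semi-invariance forces each $F_k$ to be a monomial-weight class in $y$. Smoothness along $L_\lambda\cap\mathcal X$ (or along $L_\lambda$ when $L_\lambda\subset\mathcal X$) forces $F$ to contain terms linear in $y$, or the appropriate pure powers. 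This produces congruences: for each special coordinate $y_j$ there is a monomial $y_j^{d}$, $y_j^{d-1}x$, $y_j^{d-1}y_l$, and so on, of the common weight. In the codimension-one case, with weights $(0,\dots,0,a,b)$ and $F$ having weight $0$ (normalize using the $x$-part, which must contain a term of degree $\ge d-2$ in $x$ so that $\mathcal X\cap L$ is proper), the possibilities are: $da\equiv0$ or $(d-1)a\equiv0$ or $(d-1)a+b\equiv0$, and similarly for $b$. One also needs an $x$-term of degree $d-1$ or $d-2$ times something in $y$, since $F|_{L}$ is nonzero. Solving these small linear systems modulo $m$, with $\gcd(a,b,m)=1$, gives $m\mid d$, $d-1$, or $d-2$. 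The $d-2$ option arises only when $F$ has no term linear in $y$ with $x$-degree $d-1$, so $F\in(y)^2$ restricted to $L$ as a quadratic in $y$. Then $\mathcal X$ is singular where the $x$-coefficients vanish simultaneously, unless $n=2$ or $m\le2$ (a dimension count of $n-2$ common zeros gives a singular point when $n\ge3$).

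For codimension two, I run the same bookkeeping with three special coordinates $y_1,y_2,y_3$ (or the contained case $L_\lambda\subset\mathcal X$ with four coordinates, $n\le4$). Each $y_j$ "points to" another variable through its Fermat/Klein-type monomial, giving a functional graph on the special coordinates, as in the standard classification of orders (Zheng, González-Aguilera–Liendo). Chains and cycles of length $\ell$ yield orders dividing products like $(d-1)^{\ell}$, $(d-1)^{\ell-1}d$, and cycle terms $d^2-3d+3$ for $2$-cycles twisted by a $(d-2)$-factor. The extra factor $d-2$ or $d^2-3d+3$ appears exactly when the pointing chain is forced to pass through a quadratic-in-$y$ term. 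As above, that configuration is allowed only in low dimension ($n\le3$ or $n=2$) because otherwise a dimension count produces a singular point on $L_\lambda\cap\mathcal X$.

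The main obstacle is the careful case analysis showing which graph configurations survive smoothness in each dimension. In particular, I must exclude the $d^2-3d+3$ and $(d-2)(d-1)$ types for $n\ge4$, and the extra $(d-1)$ factors for $n=3$. For this I would first try to show that if a needed monomial has $x$-degree $\le d-2$, then the coefficient polynomials in $x$ have a common zero on $L_\lambda$ once $\dim L_\lambda$ exceeds the number of such coefficients, giving a singular point. That count is what produces the thresholds $n\ge4$, $n=3$, $n=2$.
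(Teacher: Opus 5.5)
There is a genuine gap, and it starts with the codimension-one reduction. Your inference that codimension one forces $\dim L_\lambda=n$ fails for $n=2$. An eigenline ($\dim L_\lambda=n-1=1=n/2$) can lie on the surface, and this is exactly the configuration that produces $d-2$. In the paper this is $g=[D(a,a,1,1)]$ with both skew eigenlines $W(X_0,X_1)$ and $W(X_2,X_3)$ contained in $\mathcal X$, $t=a$, and monomials $X_iX_0^{d-1}$, $X_jX_1^{d-1}$ with $i,j\in\{2,3\}$, giving $a^{d-1}=a$. Conversely, when $\dim V_\lambda=n+1$ there is only \emph{one} exceptional eigenvalue, not two. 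Comparing $X_0^d$ or $X_0^{d-1}X_j$ with the nonzero $X_0$-free part of $F$ then gives $m\mid d$ or $m\mid d-1$ directly.

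The computation you actually describe does not prove the claim. Fixing the weight of $F$ to be $0$ is illegitimate precisely when $L_\lambda\subset\mathcal X$: there is no pure $x^d$ term and the $x^{d-1}y$ terms are forced, so $t$ equals an exceptional eigenvalue. Moreover, the congruences you list ($da\equiv0$, $(d-1)a\equiv0$, $(d-1)a+b\equiv0$ and their mirror images) admit $m=d(d-2)$, $d(d-1)$, $(d-1)^2$. They are the codimension-two computation for $[D(a,b,1_n)]$, not a proof that $m\mid d,\,d-1,\,d-2$. Your description of the $d-2$ case is also off: $F\in(y)^2$ would make $\mathcal X$ singular along all of $L$, whereas in that case the linear terms $F_{1,0}X_0+F_{0,1}X_1$ are present.

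What is missing is the following chain:
\begin{enumerate}[$(1)$]
\item Smoothness along $W(X_0,X_1)$ forces the part of $F$ of degree at most one in $X_0,X_1$ to be nonzero, which pins $t\in\{1,a,b\}$.
\item If $t\neq1$, then $a=b$. Otherwise only $F_{1,0}X_0$ survives, and $F_{1,0}$ vanishes somewhere on $W(X_0,X_1)$, giving a singular point.
\item If $t\neq 1$, then also $n=2$, since $F_{1,0},F_{0,1}$ have a common zero on $\mathbb P^{n-1}$ when $n\ge3$.
\item Only then does one split according to $\mathcal X\cap\{P_0,P_1\}$.
\end{enumerate}

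For codimension two you give only heuristics, and some of them are wrong. There are two special coordinates (not three) when $\dim L_\lambda=n-1$, and three (not four) when $L_\lambda\subset\mathcal X$ has dimension $n-2$. Your claim that a $(d-2)$-factor is allowed only in low dimension contradicts the statement itself. The order $(d-2)d$ is attained for every $n$: take $\mathrm{diag}(\zeta,\zeta^{1-d},1,\dots,1)$, with $\zeta$ a primitive $d(d-2)$-th root of unity, acting on $X_0^{d-1}X_1+X_0X_1^{d-1}+X_2^d+\cdots+X_{n+1}^d$ with $t=1$. Carried over to this setting, your weight-$0$ normalization also rules out every contained case. Those are exactly the cases producing $d^2-3d+3$, $(d-2)(d-1)$ and the whole $n=2$ list; there $t\in\{a,b,c\}$.

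The dimension thresholds rest on smoothness lemmas you do not prove:
\begin{enumerate}[$(i)$]
\item $[D(a,a,a,1_{n-1})]$ with $W(X_0,X_1,X_2)\subset\mathcal X$ needs $n\le4$, and its $d-2$ subcase occurs only for $n=4$;
\item $[D(a,a,c,1_{n-1})]$ forces $n\le3$;
\item $[D(a,b,c,1_{n-1})]$ forces $n=2$.
\end{enumerate}
The lists then come from an exhaustive enumeration over which coordinate points lie on $\mathcal X$ and which monomials $X_i^{d-1}X_j$ occur, each solved against the pinned value of $t$. You defer this as ``the main obstacle'', but it is the proof. As it stands, the proposal is a plan rather than an argument.
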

One geometric aspect of an automorphism is the structure of the quotient variety induced by its action.
In particular, automorphisms whose quotients are rational varieties
are studied in several works $[\ref{bio:bb16n},\ref{bio:th21l},\ref{bio:th23g},\ref{bio:th25g},\ref{bio:th25a},\ref{bio:hs25l}]$,
where various sufficient conditions guaranteeing the rationality of the quotient space are given.
In this paper, we investigate this problem for automorphisms of smooth hypersurfaces whose fixed loci have codimension at most two.
Our second main result focuses on automorphisms whose orders are divisible by $d-2$, $d-1$, or $d$, where $d$ denotes the degree of the hypersurface, and provides sufficient conditions for the rationality of the corresponding quotient varieties.
The restriction to automorphisms of such orders is motivated by the theory of Galois points ([\ref{bio:my00},\ref{bio:y03}]) and by the notion of Galois skew lines $([\ref{bio:th25g}])$.
The problem of detecting the existence and the position of Galois points from automorphisms are studied in [\ref{bio:bb16n},\ref{bio:th21l}]:
the former treats the case $n=1$ under the assumption $\operatorname{ord}(g)=k(d-1)$ or $kd$ with $k\ge2$, while the latter considers arbitrary $n$ assuming that the fixed locus of the automorphism has codimension at most two.
\begin{thm}\label{1.2}
Let $\mathcal X \subset \mathbb P^{n+1}$ be a smooth hypersurface of degree $d$ for $n\ge 2$, and let $g\in \mathrm{PGL}(n+2,k)$ be an automorphism of $\mathcal X$.
We have the following:
\begin{enumerate}[$(i)$]
\item We assume that $\operatorname{ord}(g)\geq3$ and $\mr{ord}(g)\in\{d-2, d-1,d\}$.
If $\operatorname{Fix}(g)$ has codimension one in $\mathcal X$,
then the quotient space $\mathcal X/\langle g\rangle$ is a rational variety. 
\item We assume that $\operatorname{ord}(g)=k(d-1)$ or $kd$ for some integer $k \ge 2$.
If $\operatorname{Fix}(g)$ has codimension one in $\mathcal X$,
then the quotient space $\mathcal X/\langle g\rangle$ is a rational variety. 
\end{enumerate}
Here $\langle g\rangle$ is a cyclic group which is generated by $g$.
\end{thm}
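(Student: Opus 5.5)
Set $m=\mr{ord}(g)$ and lift $g$ to a diagonal matrix $\mathrm{diag}(\zeta^{a_0},\dots,\zeta^{a_{n+1}})$, with $\zeta$ a primitive $m$-th root of unity, acting on $x_0,\dots,x_{n+1}$. The fixed locus of $g$ on $\mathbb P^{n+1}$ is the union of the projective eigenspaces $L_\lambda$. Since $\mathrm{Fix}(g)$ has codimension one in $\mathcal X$, some eigenspace has dimension at least $n$. The case of codimension $0$ would give $g=1$. So there is either a hyperplane eigenspace, in which case $g$ is a homology $\mathrm{diag}(\zeta,1,\dots,1)$, or an eigenspace of dimension $n$ meeting $\mathcal X$ in a divisor. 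In the second case the eigenspace has codimension two in $\mathbb P^{n+1}$, say $\{x_0=x_1=0\}$. Smoothness of $\mathcal X$ should then force $L_\lambda\subset\mathcal X$ or a similar structure that we first have to analyse.

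\textbf{Step 1 (normal forms).} I will use the standard argument from the proof of Theorem \ref{1.1}. Write $F=\sum_j x_0^j F_j(x_1,\dots,x_{n+1})$ and apply smoothness at the points of $L_\lambda\cap\mathcal X$. In the homology case $F=x_0^d+G(x_1,\dots,x_{n+1})$, up to coordinates, and $m\mid d$. Alternatively $F=x_0^{d-1}x_1+G$, and then $m\mid d-1$. In the codimension-two-eigenspace case with $m\ge3$ and $m\mid d-2$ (where $n=2$ by Theorem \ref{1.1}), $x_0,x_1$ span a two-dimensional space on which $g$ acts with weights $(1,\ast)$. I expect $F$ to take a form like $x_0^{d-2}q(x_1,\dots)+\dots$ with controlled monomials.

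\textbf{Step 2 (rationality).} For a homology with $F=x_0^m$-type dependence, the invariant field is generated by $x_0^m$ and $x_1,\dots,x_{n+1}$. For (ii) and for $m\in\{d-1,d\}$ in (i), the quotient $\mathcal X/\langle g\rangle$ is birational to a hypersurface in weighted projective space $\mathbb P(m,1,\dots,1)$, cut out by $F$ rewritten in $y=x_0^m$. When $m=d$ this is $y+G=0$, which is a graph and hence rational. When $m=d-1$ the equation is $yx_1+G=0$, which is linear in $y$, so $y=-G/x_1$ on the open set $x_1\ne0$ and the quotient is birational to $\mathbb P^n$.

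For $m=k(d-1)$ or $kd$ with $k\ge2$ in (ii), the fixed-divisor condition together with Theorem \ref{1.1} forces $g$ to be a product of commuting actions. Then $g^k$ has order $d-1$ or $d$, and $g$ acts on $\mathcal X/\langle g^k\rangle\simeq_{bir}\mathbb P^n$ linearly. A quotient of $\mathbb P^n$ by a finite abelian (cyclic) linear group is rational by Fischer's theorem. This gives rationality, since $\mathcal X/\langle g\rangle=(\mathcal X/\langle g^k\rangle)/\langle g\rangle$. For the case $m=d-2$, $n=2$, I would show the invariant field is generated by $x_0^{d-2}$-type invariants and that the resulting surface equation is linear in one invariant variable. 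That again yields a birational projection to $\mathbb P^2$; alternatively I could use that the quotient is a rational surface via Castelnuovo, with $q=0$, $P_2=0$.

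\textbf{Main obstacle.} I expect the case $m=d-2$ (and the mixed case in (ii)) to be the hard part, because the weight structure is not a pure homology. It has to be established that the chosen invariant appears linearly in the equation, and this requires the explicit monomial analysis from smoothness. I would first try to reduce this case to $\mathbb P(1,1,\dots)$-type projections from the fixed codimension-two linear space.
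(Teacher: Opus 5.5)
There is a genuine gap in (i): the case where $g$ is not a homology is never treated. Your case split is muddled. An $n$-dimensional linear subspace of $\mathbb P^{n+1}$ is a hyperplane, so the second case is really an $(n-1)$-dimensional eigenspace such as $\{x_0=x_1=0\}$, which must then lie in $\mathcal X$.

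The real problem is that your Step~2 computation in $\mathbb P(m,1,\dots,1)$ applies only when $g$ is a homology, and the second case already occurs for $m=d-1$, not just for $m=d-2$. For $d\ge4$, the surface $X_0^d+X_1^d+X_0X_2^{d-1}+X_1X_3^{d-1}=0$ is smooth and contains the line $X_0=X_1=0$. If $\zeta$ is a primitive $(d-1)$-th root of unity, then $g=[\mathrm{diag}(\zeta,\zeta,1,1)]$ is an automorphism of order $d-1$ fixing that line pointwise. So (i) is unproved for $m=d-1$. For $m=d-2$ you only state an expectation; the Castelnuovo alternative would require actually computing $P_2$ of the quotient. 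Your weights ``$(1,\ast)$'' are also wrong, since $g$ scales $x_0$ and $x_1$ by the same root of unity.

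Two steps are missing.
\begin{enumerate}[$(a)$]
\item As in Lemmas~\ref{caseii,lem} and~\ref{caseiii,1}, a codimension-two eigenspace lying in $\mathcal X$ forces $n=2$ and $g=[\mathrm{diag}(a,a,1,1)]$. Unequal scalars on $x_0,x_1$ would force $F_{0,0}\neq0$.
\item Project from the two skew eigenlines $\{x_0=x_1=0\}$ and $\{x_2=x_3=0\}$ to $\mathbb P^1\times\mathbb P^1$. On the joining line $[\lambda s_0:\lambda s_1:u_0:u_1]$, the relation $g^*F=\mu F$ with $\mu\in\{1,a\}$ leaves only two powers of $\lambda$ in $F$. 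These are $\lambda$ and $\lambda^{d}$, or $1$ and $\lambda^{d-1}$, when $m=d-1$, and $\lambda$ and $\lambda^{d-1}$ when $m=d-2$. Hence $\lambda^m$ is a rational function of $([s_0:s_1],[u_0:u_1])$, and $\mathcal X/\langle g\rangle$ is birational to $\mathbb P^1\times\mathbb P^1$.
\end{enumerate}
This is your ``linear in one invariant'' idea carried out in the right coordinates, i.e.\ the Galois-skew-lines structure of Theorem~\ref{back}. The paper reaches these cases through the normal-form computation of Theorem~\ref{caseii,thm} and then cites Theorems~\ref{r1} and~\ref{r4}. For homologies, your invariant-field computation is correct for $m\in\{d-1,d\}$ and is a self-contained substitute for the paper's appeal to Theorem~\ref{r1} in Type~I.

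For (ii), the claim that the fixed divisor and Theorem~\ref{1.1} ``force $g$ to be a product of commuting actions'' is not an argument. Nothing you wrote shows that $\mathcal X/\langle g^k\rangle$ is birational to $\mathbb P^n$ with $g$ acting linearly. In fact no argument is needed. By Theorem~\ref{1.1}, a codimension-one fixed locus forces $\mathrm{ord}(g)$ to divide $d$, $d-1$ or $d-2$. Hence $\mathrm{ord}(g)\le d<2(d-1)\le k(d-1)$ and $\mathrm{ord}(g)<kd$ for $d\ge3$ and $k\ge2$, so (ii) is vacuous as stated. Your passage to $g^k$ followed by Fischer's theorem does have the right shape for the paper's codimension-two Types III and V. There it would justify the step from rationality of $\mathcal X/\langle g^k\rangle$ to rationality of $\mathcal X/\langle g\rangle$, which the paper leaves implicit.
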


\section{Preliminaries}
In this section, we recall basic notions and known results on automorphisms of hypersurfaces that are relevant to this paper.
We fix notation and recall several elementary facts that will be used repeatedly in the proofs of our main results.

Let $\mathcal{X} \subset \mathbb{P}^{n+1}$ be a smooth hypersurface of degree $d$.
Suppose that $n = 1$ with $d \geq 4$, or $n \geq 2$ with $d \geq 3$, excluding the case $(n,d) = (2,4)$.
Then every automorphism of $\mathcal{X}$ is induced by an element of $\mathrm{PGL}(n+2,k)$ $([\ref{bio:acgh},\ \mathrm{Appendix\ A,\ Exercise}\ 18]$, $[\ref{bio:mm63},\ \mathrm{Theorem\ 1\ and\ Theorem\ 2}])$.

For smooth plane curves in $\mathbb P^2$ of degree at least four,
the maximal orders of automorphisms are determined by Badr and Bars.
\begin{thm}\label{bb1}$([\ref{bio:bb16n},\ {\rm Theorem}\ 1\ {\rm and}\ {\rm Theorem}\ 6])$.
Let $X\subset \mathbb P^2$ be a smooth plane curve of degree $d\geq 4$, and let $f$ be an automorphism of $X$.
Then the order of $f$ divides
\[
(d-1)d,\quad (d-1)^2,\quad  (d-2)d,\quad  \mr{or}\quad d^2-3d+3.
\]
\end{thm}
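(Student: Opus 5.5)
Theorem \ref{bb1} is quoted from [\ref{bio:bb16n}], so it is not proved in this paper; if I were to reprove it, I would use the usual diagonalization and monomial argument.

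\textbf{Step 1: normal form.} Since $d\geq 4$, every automorphism is induced by $\mathrm{PGL}(3,k)$. A finite-order element is diagonalizable, so I would choose coordinates with $f=\mathrm{diag}(1,\zeta^{a},\zeta^{b})$, where $\zeta$ is a primitive $m$-th root of unity and $m=\mathrm{ord}(f)$. Because $f$ has order exactly $m$ in $\mathrm{PGL}$, the residues $a,b$ generate $\mathbb Z/m$. The defining form $F$ is then a semi-invariant: every monomial $x^{i}y^{j}z^{l}$ occurring in $F$ has the same weight $ja+lb \bmod m$.

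\textbf{Step 2: the three vertices.} Smoothness of $X$ at, or away from, each coordinate vertex $P_x=[1:0:0]$ (and similarly $P_y,P_z$) forces $F$ to contain either $x^{d}$ or a monomial $x^{d-1}y$ or $x^{d-1}z$. Otherwise all partial derivatives vanish at $P_x$. So each vertex supplies a monomial of one of the forms $x_i^{d}$ or $x_i^{d-1}x_j$, all of one common weight.

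\textbf{Step 3: case analysis.} I would enumerate the combinatorial types of the triple of monomials, up to permuting coordinates, and solve the resulting linear congruences for $(a,b)$ modulo $m$.
\begin{enumerate}[$(i)$]
\item If $x^d,y^d,z^d$ all occur, then $da\equiv db\equiv 0$, so $m\mid d$.
\item If two pure powers occur together with $z^{d-1}x$, then $m\mid d(d-1)$.
\item The chain $x^d,\ y^{d-1}z,\ z^{d-1}y$ gives $m\mid d(d-2)$. Here the two equations give $(d-2)(a-b)\equiv 0$ together with $da\equiv 0$.
\item The chain $x^{d-1}y,\ y^{d-1}z,\ z^{d-1}y$ gives $m\mid (d-1)^2$, and similar chains are handled the same way.
\item The cyclic case $x^{d-1}y,\ y^{d-1}z,\ z^{d-1}x$ gives the equations $a\equiv (d-1)a+b\equiv (d-1)b$. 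From these, $b\equiv-(d-2)a$ and $(d^2-3d+3)a\equiv 0$. Since $a$ then generates $\mathbb Z/m$, this yields $m\mid d^2-3d+3$.
\end{enumerate}
In each case, the conclusion that $m$ divides the modulus obtained uses that $a,b$ generate $\mathbb Z/m$.

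\textbf{Main obstacle.} The hard part is the bookkeeping in Step 3. First, all configurations must be listed, including degenerate ones where two of the three chosen monomials coincide, for example $y^{d-1}z$ serving two vertices. Second, the case of repeated eigenvalues (homologies, where $a=0$ or $a=b$) must be treated separately. In those cases one sees directly that $m\mid d$ or $m\mid d-1$, using that $F$ must contain a pure power or a near-pure power in the isolated variable. Finally, one must check that every resulting modulus divides one of the four listed numbers.
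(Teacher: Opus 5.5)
Steps 1 and 2 are fine, and you are right that the paper only cites this result from Badr--Bars without proof. The gap is in Step 3: the congruences from one monomial per coordinate vertex do not, on their own, put $m$ in the stated list.

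Your case (iv) is miscomputed. The weights of $x^{d-1}y$, $y^{d-1}z$, $z^{d-1}y$ are $a$, $(d-1)a+b$, $a+(d-1)b$. These give $b\equiv -(d-2)a$ and $(d-1)b\equiv 0$, hence only $m\mid (d-1)(d-2)$. Indeed $(a,b)=(1,-(d-2))$ with $m=(d-1)(d-2)$ satisfies all three congruences and has $\gcd(a,b,m)=1$. The triples $x^{d-1}y,\ y^{d-1}x,\ z^{d-1}x$ and $x^{d-1}y,\ y^{d-1}x,\ z^{d-1}y$ likewise give only $m\mid (d-1)(d-2)$. For $d\ge 5$ this number divides none of the four listed ones; for $d=5$, $12$ divides none of $20,16,15,13$. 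So your final check fails exactly in these configurations.

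The missing ingredient is a further use of smoothness. A smooth plane curve is irreducible, so $F$ is not divisible by any coordinate. In each bad triple the three monomials share a variable ($y$, $x$, $y$ respectively). Hence $F$ must also contain a monomial in the other two variables, of the common weight.
\begin{enumerate}[$(i)$]
\item For $x^{d-1}y,\ y^{d-1}z,\ z^{d-1}y$: a monomial $x^{d-l}z^{l}$ gives $lb\equiv a$. Since $\gcd(a,m)=1$, this means $m\mid 1+l(d-2)$.
\item For $x^{d-1}y,\ y^{d-1}x,\ z^{d-1}x$: here $a\equiv (d-1)b$ and $\gcd(b,m)=1$. A monomial $y^{j}z^{d-j}$ gives $m\mid 1+j(d-2)$.
\end{enumerate}
In both cases $\gcd(m,d-2)=1$, which together with $m\mid (d-1)(d-2)$ yields $m\mid d-1$.
\begin{enumerate}[$(i)$]
\setcounter{enumi}{2}
\item For $x^{d-1}y,\ y^{d-1}x,\ z^{d-1}y$: a monomial $x^{d-l}z^{l}$ gives $(d-1)a\equiv l(d-1)b\equiv 0$. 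Combined with $(d-2)a\equiv 0$ this forces $a\equiv 0$, and then $(d-1)b\equiv 0$ gives $m\mid d-1$.
\end{enumerate}
With this step added, enumerating all $27$ vertex triples works. When the $2\times 2$ system is diagonal, as for $x^d,y^d,z^d$, use its last invariant factor rather than its determinant, e.g.\ $d$ rather than $d^2$.

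Some smaller points:
\begin{enumerate}[$(a)$]
\item A monomial $x_i^{d-1}x_j$ can only serve the vertex $P_i$, so the ``coinciding monomials'' degeneracy you mention cannot occur.
\item Homologies need no separate treatment. The congruence argument applies to any $(a,b)$ with $\gcd(a,b,m)=1$.
\item In (iii) the relations are $b\equiv -(d-1)a$ and $d(d-2)a\equiv 0$, not $da\equiv 0$. Your conclusion $m\mid d(d-2)$ is still correct.
\end{enumerate}
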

Moreover, Badr and Bars show that, for each extremal case, a smooth plane curve admitting an automorphism of the corresponding maximal order is uniquely determined up to isomorphism. They also determine the full automorphism group of each such curve.
In higher dimensions, Zheng determined the maixaml possible orders of linear automorphisms of smooth hypersurfaces.
\begin{thm}$([\ref{bio:z22},\ \mr{Theorem}\ 4.8])$.
Let $d\ge 3$ and $n\ge 1$ be integers.
Let $k$ be the order of a linear automorphism of a smooth hypersurface of degree $d$
and dimension $n$ in $\mathbb P^{n+1}$.
Then $k$ is a factor of one of the following integers:
\begin{enumerate}[$(i)$]
\item$\frac{|1-(1-d)^{n+2}|}{d}$;
\item$(d-1)^{n+1}$;
\item$|1-(1-d)^a| \quad \text{for } 1\le a\le n+1$;
\item$\mathrm{lcm}\bigl(|1-(1-d)^{a_1}|,\dots,|1-(1-d)^{a_t}|\bigr)$,
where $t\ge 2$, $1\le a_1<\cdots<a_t$, and $\sum_{i=1}^t a_i\le n+2$;
\item$\mathrm{lcm}\bigl(|1-(1-d)^{a_1}|,\dots,|1-(1-d)^{a_t}|,(d-1)^{\,b-1}\bigr)$,
where $t\ge 1$, $1\le a_1<\cdots<a_t$, $b\ge 2$, and $\sum_{i=1}^t a_i+b\le n+2$.
\end{enumerate}
\end{thm}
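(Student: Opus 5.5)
The plan is to reduce the statement to a combinatorial analysis of the weights of a diagonalized automorphism, following the standard method for smooth hypersurfaces with a cyclic symmetry.

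\textbf{Setup.} Let $g$ be a linear automorphism of order $m$ of the smooth hypersurface $\mathcal X=\{F=0\}$. First lift $g$ to $\mathrm{GL}(n+2,k)$ and diagonalize it as $x_i\mapsto \zeta^{e_i}x_i$, where $\zeta$ is a primitive $m$-th root of unity (rescaling the lift if necessary). Since $\mathcal X$ is $g$-invariant, $F$ is semi-invariant: every monomial of $F$ has the same weight $c$ modulo $m$. Smoothness then gives the key input. For each $i$, the point $P_i=[0:\dots:1:\dots:0]$ cannot be a singular point of $\mathcal X$, so $F$ must contain a monomial $x_i^{d-1}x_{j}$ for some index $j=\sigma(i)$, possibly $j=i$. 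This yields a map $\sigma\colon\{0,\dots,n+1\}\to\{0,\dots,n+1\}$ satisfying the congruences
\[
(d-1)e_i+e_{\sigma(i)}\equiv c \pmod m .
\]

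\textbf{Structure of $\sigma$.} The functional graph of $\sigma$ decomposes into disjoint components, each consisting of one cycle with rooted trees hanging off it. I would analyse the weights component by component.

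\textbf{Cycles.} On a cycle of length $a$, iterating the relation $e_{\sigma(i)}\equiv c-(d-1)e_i$ around the cycle gives
\[
\bigl(1-(1-d)^a\bigr)e_i\equiv c\cdot(\text{explicit factor}) \pmod m .
\]
After subtracting a fixed weight, which we may do since $g$ is only defined projectively, this shows that the differences of weights on the cycle are killed by $|1-(1-d)^a|$.

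\textbf{Trees.} For a vertex at depth $\ell$ above a cycle, the relation expresses $(d-1)e_i$ in terms of the weight one step closer to the cycle. Inducting along a chain of length $b$ (counting the cycle vertex it lands on), the weights are determined up to a factor $(d-1)^{b-1}$. This is the source of that factor in case $(v)$.

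\textbf{Assembling the bound.} The order of $g$ in $\mathrm{PGL}$ equals the order of the weight differences modulo $m$, so it divides the lcm of the contributions of all components. The sizes of the components sum to $n+2$, which gives the constraints $\sum a_i\le n+2$ and $\sum a_i+b\le n+2$. The extremal cases come out as follows.
\begin{itemize}
\item A single cycle through all $n+2$ vertices gives $|1-(1-d)^{n+2}|$. The extra division by $d$ comes from $1-(1-d)=d$: one factor $d$ is absorbed by the projective scalar, since $\sum$ over the cycle relation shows $c$ is determined up to this factor. This is case $(i)$.
\item A single long chain feeding into a fixed point $x_j^d$ gives $(d-1)^{n+1}$. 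This is case $(ii)$.
\item Mixed configurations give cases $(iii)$--$(v)$.
\end{itemize}

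\textbf{Main obstacle.} I expect the hardest step to be handling forests where several trees attach to different cycles. The difficulty is showing that only one tree-chain length $b$ contributes, which is why case $(v)$ has a single $(d-1)^{b-1}$ factor rather than several. I would do this by comparing the lcm bookkeeping and noting that $(d-1)^{b-1}$ for the longest chain dominates all the others, because each shorter chain gives a power of $(d-1)$ dividing it. Tracking the projective normalization carefully, so that the constant $c$ and the division by $d$ in case $(i)$ are handled correctly, is the other delicate point.
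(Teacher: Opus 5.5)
The paper does not prove this statement; it quotes it from Zheng [\ref{bio:z22}, Theorem 4.8]. Your argument therefore has to stand on its own, and as written it has a genuine gap. Your setup is the natural starting point: diagonalize, use non-singularity at the coordinate points $P_i$ to get monomials $x_i^{d-1}x_{\sigma(i)}$, and study the functional graph of $\sigma$. The failing step is ``mixed configurations give cases $(iii)$--$(v)$''. It is not merely unproved; it is false for the constraints you have derived, because smoothness at the $n+2$ coordinate points alone is too weak to produce this list.

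Here is a counterexample to that step. Take $n=2$, $d\ge 4$, and $\sigma\colon 0\mapsto 1\mapsto 2\mapsto 0$, $3\mapsto 0$, i.e.\ a $3$-cycle carrying one tree vertex. Set $c=0$ and $m=(d-1)\bigl((d-1)^3+1\bigr)=(d-1)d(d^2-3d+3)$.
\begin{enumerate}
\item Every solution of your congruences $(d-1)e_i+e_{\sigma(i)}\equiv 0$ modulo $m$ is a multiple of $(e_0,e_1,e_2,e_3)=(-(d-1),(d-1)^2,-(d-1)^3,1)$.
\item The constant solutions form a subgroup of order $d$.
\item Hence this weight vector defines an element of $\mathrm{PGL}(4,k)$ of order $(d-1)(d^2-3d+3)$.
\end{enumerate}
For $n=2$ the list reduces to the divisors of $(d-2)(d^2-2d+2)$, $(d-1)^3$, $d(d^2-3d+3)$, $d(d-1)^2$ and $d(d-1)(d-2)$. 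Using $d^2-3d+3=(d-1)(d-2)+1$, one checks that $(d-1)(d^2-3d+3)$ divides none of them. For $d=4$ it is $21$, against $20,27,28,36,24$.

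Your own bookkeeping shows the problem. A component consisting of an $a$-cycle with a tree of depth $h=n+2-a$ gives $\frac{|1-(1-d)^a|}{d}(d-1)^h$. With $b$ ``counting the cycle vertex it lands on'', such a component has $a+b=n+3$, so it is not covered by $(v)$. The paper's Type~VI analysis also uses only the coordinate points; it meets exactly this configuration and records the bound $(d^2-3d+3)(d-1)$ in Theorem~\ref{casevi,thm}.

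What excludes such configurations is smoothness along higher-dimensional coordinate strata, which your proposal never uses. Suppose $\Pi_I=\{x_j=0,\ j\notin I\}$ lies in $\mathcal X$. Then the singular points of $\mathcal X$ on $\Pi_I$ are the common zeros of the forms $\partial F/\partial x_j|_{\Pi_I}$, $j\notin I$, each of degree $d-1$ on $\Pi_I\cong\mathbb P^{|I|-1}$. So for every $I$, smoothness forces one of two things:
\begin{enumerate}
\item a monomial of weight $c$ in the variables $x_i$, $i\in I$, alone; or
\item for at least $|I|$ distinct $j\notin I$, a monomial of weight $c$ that is linear in $x_j$ and otherwise involves only the $x_i$, $i\in I$.
\end{enumerate}
In the example with $d=4$, the weights are $(81,9,57,1)$ modulo $84$. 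Take $I=\sigma^{-1}(0)=\{2,3\}$. No quartic monomial in $x_2,x_3$ has weight $0$. The only weight-zero monomials $x_2^px_3^qx_j$ with $j\in\{0,1\}$ are $x_2^3x_0$ and $x_3^3x_0$. So every quartic invariant under this element is singular at some point of the line $\{x_0=x_1=0\}$.

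The proof therefore needs a combinatorial analysis that exploits these stratum conditions, typically with $I$ the set of preimages of a vertex. Its job is to control trees attached to cycles of length $a\ge 2$, so that powers of $d-1$ enter only in the way recorded by $(ii)$ and $(v)$. That is where the substance of the theorem lies. The issue you single out, several chain lengths, is by contrast immediate, since an lcm of powers of $d-1$ is just the largest one.
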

Zheng’s theorem gives a complete list of the maximal orders from a numerical viewpoint.
The aim of the present paper is to refine this classification by incorporating geometric information, namely the structure of fixed loci.

We fix homogeneous coordinates $[X_0:\cdots:X_{n+1}]$ on the projective space $\mathbb{P}^{n+1}$.
For integers $0 \le r \le n+1$,
we set
\[
W(X_0,\ldots,X_l)
:=\{[X_0:\cdots:X_{n+1}]\in\mathbb P^{n+1}\mid X_0=\cdots=X_l=0\}.
\]
Let $P^r, P^{n-r} \subset \mathbb{P}^{n+1}$ be projective subspaces of dimensions $\dim P^r = r$ and $\dim P^{n-r} = n-r$, respectively, such that
$P^r \cap P^{n-r} = \emptyset$.
After a suitable change of homogeneous coordinates on $\mathbb{P}^{n+1}$, we may assume that 
$P^r = W(X_{n-r+1}, \ldots, X_{n+1})$ and 
$P^{n-r} = W(X_{0}, \ldots, X_{n-r})$.
Consider the projection map 
\begin{align*}
\mathbb{P}^{n+1} \setminus (P^r \cup P^{n-r})
&\longrightarrow \mathbb{P}^r \times \mathbb{P}^{n-r}, \\
[X_0:\cdots:X_{n+1}]
&\longmapsto
\bigl([X_0:\cdots:X_r], [X_{r+1}:\cdots:X_{n+1}]\bigr).
\end{align*}
Let $X \subset \mathbb{P}^{n+1}$ be a smooth hypersurface of degree $d\geq3$ for $n\geq 2$. 
Restricting this projection to $X$, we obtain a rational map
\[
f_{P^r,P^{n-r}} \colon X \dashrightarrow \mathbb{P}^r \times \mathbb{P}^{n-r}
\]
whose degree is given by
\[
\deg\bigl(f_{P^r,P^{n-r}}\bigr)=
\begin{cases}
d-2, & \text{if } P^r \subset X \text{ and } P^{n-r} \subset X,\\
d-1, & \text{if } P^r \subset X \text{ and } P^{n-r} \not\subset X,\\
d,   & \text{if } P^r \not\subset X \text{ and } P^{n-r} \not\subset X.
\end{cases}
\]
Note that when $r=0$, the map $f_{P^r,P^{n-r}}$ coincides with the projection from the point $P^0$.
By $[\ref{bio:th25g},\ \mathrm{Theorem}\ 3.5]$, if $(n,d) \neq (2,4)$ and the rational map
$f_{P^r,P^{n-r}}$ is Galois, then its Galois group is cyclic and is generated
by the automorphism given by the matrix
\[
\begin{pmatrix}
e_m I_{r+1} & 0 \\
0 & I_{n-r+1}
\end{pmatrix}
\]
where $e_m$ denotes a primitive $m$-th root of unity and
$m = \deg\bigl(f_{P^r,P^{n-r}}\bigr)$.
The following result is straightforward, and hence its proof is omitted.
\begin{thm}\label{back}
Let $\mac X \subset \mathbb{P}^{n+1}$ be a smooth hypersurface of degree $d$ with $n \geq 2$.
Let $P^r = W(X_{n-r+1}, \ldots, X_{n+1})$ and $P^{n-r} = W(X_{0}, \ldots, X_{n-r})$ be projective spaces, and let $f_{P^r,P^{n-r}} \colon \mac X \dashrightarrow \mathbb{P}^r \times \mathbb{P}^{n-r}$
be the rational map determined by $P^r$ and $P^{n-r}$.
Let $m$ denote the degree of this rational map.
If $\mac X$ admits an automorphism given by the matrix
$\begin{pmatrix}
e_m I_{r+1} & 0 \\
0 & I_{n-r+1}
\end{pmatrix}$,
then the rational map $f_{P^r,P^{n-r}}$ is Galois.
In particular, $\mathcal X / \langle g \rangle$ is a rational variety.
\end{thm}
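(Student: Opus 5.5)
The plan is to show that the cyclic group $G=\langle g\rangle$, where $g$ is the automorphism with matrix $\mathrm{diag}(e_m I_{r+1}, I_{n-r+1})$, acts on $\mathcal X$ by deck transformations of $f_{P^r,P^{n-r}}$. We then compare the degree of $f_{P^r,P^{n-r}}$ with $|G|=m$, which forces the extension of function fields to be Galois with group $G$.

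\emph{Step 1.} In the chosen coordinates, $f_{P^r,P^{n-r}}$ sends a point to the pair consisting of the first block of coordinates and the second block. The automorphism $g$ multiplies the first block by the scalar $e_m$ and fixes the second block. Since scalar multiplication of one block does not change the point of the corresponding projective factor, we get $f_{P^r,P^{n-r}}\circ g=f_{P^r,P^{n-r}}$ wherever both sides are defined. So $G$ acts on the fibres, and $k(\mathbb P^r\times\mathbb P^{n-r})\subset k(\mathcal X)^G\subset k(\mathcal X)$.

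\emph{Step 2.} Since $g$ has order exactly $m$ in $\mathrm{PGL}(n+2,k)$ and acts faithfully on $\mathcal X$, Artin's theorem gives $[k(\mathcal X):k(\mathcal X)^G]=m$. (Faithfulness holds because $\mathcal X$ spans $\mathbb P^{n+1}$, so a nontrivial projective transformation cannot fix $\mathcal X$ pointwise.)

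\emph{Step 3.} By the degree formula, $[k(\mathcal X):k(\mathbb P^r\times\mathbb P^{n-r})]=\deg f_{P^r,P^{n-r}}=m$. Combining this with Step 2 gives $k(\mathcal X)^G=k(\mathbb P^r\times\mathbb P^{n-r})$. Hence the extension is Galois with group $G$, that is, $f_{P^r,P^{n-r}}$ is Galois.

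\emph{Step 4 (rationality).} The equality of fields from Step 3 says that the quotient $\mathcal X/G$, whose function field is $k(\mathcal X)^G$, is birational to $\mathbb P^r\times\mathbb P^{n-r}$. The latter is rational, so $\mathcal X/\langle g\rangle$ is a rational variety.

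The only point needing care is whether the degree formula for $f_{P^r,P^{n-r}}$ (the values $d$, $d-1$, $d-2$) is valid in the present setting. This formula is recalled above, and I would simply invoke it. If one wanted to verify it directly, the fibre over a general point $([a],[b])$ is the set of points $[sa:tb]$ lying on $\mathcal X$ and outside $P^r\cup P^{n-r}$. These are the roots of the binary form $F(sa,tb)$ of degree $d$ in $(s:t)$, after removing the roots $s=0$ or $t=0$ that occur when the corresponding subspace lies in $\mathcal X$. A general such line meets $\mathcal X$ transversally, so the count is $m$. The rest of the argument is formal.
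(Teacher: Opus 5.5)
Your proof is correct and is the straightforward argument the paper omits. Since $g$ scales one coordinate block, it preserves the fibres of $f_{P^r,P^{n-r}}$, so $k(\mathbb P^r\times\mathbb P^{n-r})\subseteq k(\mathcal X)^{\langle g\rangle}$; then $[k(\mathcal X):k(\mathcal X)^{\langle g\rangle}]=m=\deg f_{P^r,P^{n-r}}$ forces equality, which gives both the Galois property and the birationality of $\mathcal X/\langle g\rangle$ with $\mathbb P^r\times\mathbb P^{n-r}$.

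Your final paragraph is not needed, because the statement defines $m$ as $\deg f_{P^r,P^{n-r}}$. If you did want to verify the values $d,d-1,d-2$, note that the lines $[sa:tb]$ form a special family rather than general lines. So transversality should come from generic smoothness in characteristic zero, together with smoothness of $\mathcal X$ along $P^r$ and $P^{n-r}$, which makes the forced roots $s=0$ and $t=0$ simple.
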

Note that when $r=0$, the point $P^0$ is called a Galois point for $\mathcal X$ ([\ref{bio:my00},\ref{bio:y03}]).

We now briefly recall several results from previous studies on the rationality of quotient varieties arising from automorphisms of smooth hypersurfaces.
\begin{thm}\label{r1}$([\ref{bio:th21l},\ \mr{Theorem}\ 1.7,\  \mr{Theorem}\ 1.8,\ \mr{and}\ \mr{Theorem}\ 1.9])$.
Let $\mathcal X \subset \mathbb P^{n+1}$ be a smooth hypersurface of degree $d\ge 4$, 
and let $g\in \mathrm{PGL}(n+2,k)$ be an automorphism of $\mathcal X$.
\begin{enumerate}[$(i)$] 
\item We assume ${\rm ord}(g)=d-1$.
If one of the following holds, then $\mathcal X/\langle g\rangle$  is rational variety. 
\begin{enumerate}[$(a)$]
\item $n=1$ and $|{\rm Fix}(g)|\neq 2$;
\item $n=2$ and ${\rm Fix}(g)$ contains a non-rational curve;
\item $n\ge 3$ and ${\rm Fix}(g)$ has codimension one in $\mathcal X$.
\end{enumerate}
\item We assume ${\rm ord}(g)=d$.
If one of the following holds, then $\mac X/\langle g\rangle \cong \mathbb P^n$.
\begin{enumerate}[$(a)$]
\item $n=1$ and ${\rm Fix}(g)\neq \emptyset$;
\item $n\ge 2$ and ${\rm Fix}(g)$ has codimension one in $\mathcal X$.
\end{enumerate}
\end{enumerate}
\end{thm}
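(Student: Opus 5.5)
Since Theorem~\ref{r1} is quoted from [\ref{bio:th21l}], I would only need to reconstruct its proof. The plan is to turn each fixed-locus hypothesis into a normal form for the matrix of $g$ and for the defining polynomial $F$ of $\mathcal X$. Then I would recognise $\mathcal X/\langle g\rangle$ as the target of a Galois projection and apply Theorem~\ref{back}. I take $r=0$, so $f_{P^0,P^n}$ is projection from a point.

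First, diagonalise a lift of $g$ as $\mathrm{diag}(\lambda_0,\dots,\lambda_{n+1})$. The fixed locus of $g$ on $\mathbb P^{n+1}$ is the disjoint union of the projectivised eigenspaces $E_\lambda$, so $\mathrm{Fix}(g)=\bigcup_\lambda (E_\lambda\cap\mathcal X)$. If this has dimension $n-1$, then some $E_\lambda$ satisfies one of two conditions. Either $E_\lambda$ is a hyperplane, or $E_\lambda\cong\mathbb P^{n-1}$ and $E_\lambda\subset\mathcal X$. A smooth hypersurface of dimension $n$ contains no linear space of dimension $>n/2$, so the second case needs $n-1\le n/2$, that is $n\le 2$. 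For $n\ge3$ codimension one therefore forces a hyperplane eigenspace. For $n=2$ the second case gives a line in $\mathrm{Fix}(g)$. In $(i)(b)$ the hypothesis of a non-rational fixed curve then forces a non-rational curve $E_\lambda\cap\mathcal X$ with $E_\lambda$ a hyperplane. So in all cases with $n\ge2$ we may assume $g=\mathrm{diag}(\zeta,1,\dots,1)$ with $\zeta$ a primitive $m$-th root of unity, where $m=\mathrm{ord}(g)$.

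Next, write $F=\sum_{j=0}^d X_0^jF_{d-j}(X_1,\dots,X_{n+1})$. Invariance of $F$ up to a scalar means that only powers $X_0^j$ with a fixed residue $j \bmod m$ occur. Smoothness of $\mathcal X$ forces either $X_0^d$ or $X_0^{d-1}X_i$ to appear, and excludes $F$ divisible by $X_0$.
\begin{itemize}
\item If $m=d$, then $F=X_0^d+G(X_1,\dots,X_{n+1})$. Hence $P^0=[1:0:\cdots:0]\notin\mathcal X$ and projection from $P^0$ has degree $d$.
\item If $m=d-1$, then $F=X_0^{d-1}L+G$ with $L$ linear. Hence $P^0\in\mathcal X$ and projection from $P^0$ has degree $d-1$.
\end{itemize}
In both cases $g$ is exactly the matrix of Theorem~\ref{back}, so the projection is Galois with group $\langle g\rangle$. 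Consequently $\mathcal X/\langle g\rangle$ is birational to $\mathbb P^n$. When $m=d$ the projection is a morphism, so it is actually a finite map and $\mathcal X/\langle g\rangle\cong\mathbb P^n$. This settles $(i)(b)$, $(i)(c)$ and $(ii)(b)$.

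For $n=1$ I would argue by Riemann--Hurwitz, since the quotient of a curve is rational exactly when it has genus $0$. In $(ii)(a)$, a fixed point together with a bound on the eigenvalue pattern should again force $g=\mathrm{diag}(\zeta,1,1)$ with $F=X_0^d+G$, giving a Galois point. In $(i)(a)$ I would list the possible eigenvalue configurations of an order-$(d-1)$ automorphism of a smooth plane curve. One then checks that every configuration other than the one with exactly two fixed points has the normal form $X_0^{d-1}L+G$, or a form whose quotient has genus $0$ by a direct Hurwitz count. I expect this case-by-case check for $(i)(a)$ to be the main obstacle. The configuration with $|\mathrm{Fix}(g)|=2$ is exactly where the quotient can have positive genus, and isolating it requires careful bookkeeping of which monomials survive invariance while keeping the curve smooth.
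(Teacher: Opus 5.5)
Your argument for $(i)(b)$, $(i)(c)$, and $(ii)(b)$ with $n\ge 3$ is sound. It is essentially the paper's Type~I computation (Theorem~\ref{casei,thm}) followed by Theorem~\ref{back}; the paper itself only quotes Theorem~\ref{r1}. There are, however, two gaps.

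\textbf{Gap 1: $(ii)(b)$ with $n=2$.} Your sentence ``so in all cases with $n\ge2$ we may assume $g=\mathrm{diag}(\zeta,1,\dots,1)$'' is not justified here. The hypothesis is only $\dim\mathrm{Fix}(g)=1$. Your dichotomy leaves open that every one-dimensional component of $\mathrm{Fix}(g)$ is a line $\mathbb P(W)\subset\mathcal X$ with $\dim W=2$, and the non-rational-curve argument you use in $(i)(b)$ is not available. You must exclude this case by a monomial computation. Normalise $g=[D(a,b,I_2)]$ with $W(X_0,X_1)\subset\mathcal X$. If $a\ne b$, semi-invariance kills one of $F_{1,0},F_{0,1}$, and the zeros on $W(X_0,X_1)$ of the other are singular points (Lemma~\ref{caseiii,1}). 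If $a=b$, the linear part forces $t=a$. After a coordinate change on the $a$-eigenline $W(X_2,X_3)$, you get one of two monomials. If that line is not in $\mathcal X$, the monomial $X_0^d$ appears and gives $a^{d-1}=1$. If it is in $\mathcal X$, some $X_0^{d-1}X_j$ with $j\in\{2,3\}$ appears and gives $a^{d-2}=1$. Either way $\mathrm{ord}(g)$ divides $d-1$ or $d-2$, which is incompatible with $\mathrm{ord}(g)=d\ge4$ (compare Lemma~\ref{caseii,lem} and Theorem~\ref{caseii,thm}). Only after this are you in the homology case.

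\textbf{Gap 2: the case $n=1$.} Here you give only a plan: $(ii)(a)$ rests on ``should again force'', and in $(i)(a)$ the decisive check is left open. The missing step is short. Suppose $g$ is not a homology, and write $g=[\mathrm{diag}(w_0,w_1,w_2)]$ with pairwise distinct $w_i$, so that $\mathrm{Fix}(g)\subset\{P_0,P_1,P_2\}$. If $P_i\notin\mathcal X$, the monomial $X_i^d$ appears, hence $t=w_i^d$. If $P_i\in\mathcal X$, some $X_i^{d-1}X_{j(i)}$ with $j(i)\ne i$ appears, hence $t=w_i^{d-1}w_{j(i)}$.

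If $\mathrm{ord}(g)=d-1$, then $w_i^{d-1}=1$, so these conditions read $t=w_i$ and $t=w_{j(i)}$. Thus $t=w_k$ for a unique $k$, $P_k\notin\mathcal X$, and the other two coordinate points lie on $\mathcal X$, so $|\mathrm{Fix}(g)|=2$. Hence $|\mathrm{Fix}(g)|\ne2$ forces a homology, whose centre lies on $\mathcal X$ with $F=X_0^{d-1}L+G$.

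If $\mathrm{ord}(g)=d$, then $w_i^d=1$. So $P_i\notin\mathcal X$ gives $t=1$, while $P_i\in\mathcal X$ gives $t=w_{j(i)}/w_i\ne1$. Hence either $\mathrm{Fix}(g)=\emptyset$, or all three $P_i$ lie on $\mathcal X$. In the latter case there are two possibilities for $i\mapsto j(i)$. If it is a $3$-cycle, then $t^3=1$ and $\mathrm{ord}(g)=3$. If its graph contains a $2$-cycle, then $t=-1$ and two weights coincide. Both are impossible for $d\ge4$. So $\mathrm{Fix}(g)\ne\emptyset$ forces a homology with $F=X_0^d+G$.

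No Hurwitz count is needed: for $\mathrm{ord}(g)=d-1$, non-homology configurations with $|\mathrm{Fix}(g)|\in\{0,1,3\}$ never occur.
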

\begin{thm}\label{r2}$([\ref{bio:bb16n},\ {\rm Corollary}\ 24\ {\rm and}\ {\rm Corollary}\ 32])$.
Let $\mathcal X \subset \mathbb P^{2}$ be a smooth plane curve of degree $d\ge 4$, 
and let $g\in \mathrm{PGL}(3,k)$ be an automorphism of $\mathcal X$.
If ${\rm ord}(g)=k(d-1)$ or $kd$ for $k\ge 2$, then $\mac X/\langle g\rangle\cong \mathbb P^1$.
\end{thm}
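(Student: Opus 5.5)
The plan is to reduce to the two Galois-point cases of Theorem~\ref{r1}(ii)(a) and (i)(a) by passing to a suitable power of $g$. Since $\mathcal X$ is a curve, $\mathcal X/\langle g\rangle$ is a smooth projective curve. If for some $h\in\langle g\rangle$ we know $\mathcal X/\langle h\rangle\cong\mathbb P^1$, then $\mathcal X/\langle g\rangle$ is the quotient of $\mathbb P^1$ by the finite cyclic group $\langle g\rangle/\langle h\rangle$. It is therefore dominated by $\mathbb P^1$, hence $\cong\mathbb P^1$ by Lüroth's theorem. So it suffices to find a power $h=g^j$ of order $d$ or $d-1$ whose quotient is rational. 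The natural candidate is $h=g^k$, of order $d$ (resp.\ $d-1$).

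First I would diagonalize $g=\mathrm{diag}(1,\zeta^{a},\zeta^{b})$ with $\zeta$ a primitive $\mathrm{ord}(g)$-th root of unity, and write the defining form $F$ as a sum of $g$-semi-invariant monomials, all having the same character. Smoothness forces, for each coordinate $X_i$, a monomial $X_i^d$ or $X_i^{d-1}X_j$ ($j\ne i$) to occur in $F$; otherwise the coordinate point $[e_i]$ would be a singular point. Each such forced monomial gives a congruence between $a$ and $b$ modulo $\mathrm{ord}(g)=kd$ or $k(d-1)$. I would run through the finitely many combinatorial patterns: for each $i$, either the pure power $X_i^d$ occurs or an index $j(i)$ is chosen, giving the Fermat-type, Klein-type and mixed cases. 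In each pattern, comparing the congruences shows that the hypothesis $k\ge2$ forces $g^k$ to be a homology, i.e.\ conjugate to $\mathrm{diag}(1,1,e_d)$ or $\mathrm{diag}(1,1,e_{d-1})$. In the cases where the congruences would force $\mathrm{ord}(g)$ to divide $d^2-3d+3$ or $(d-2)d$ instead, we get a contradiction with $\mathrm{ord}(g)=kd$ or $k(d-1)$, $k\ge2$, by Theorem~\ref{bb1} and elementary gcd considerations (e.g.\ $\gcd(d^2-3d+3,d)\mid 3$).

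Once $g^k$ is a homology $\mathrm{diag}(1,1,e_m)$ with $m=d$ or $d-1$ preserving $\mathcal X$, its center $[0:0:1]$ is a point with the property of Theorem~\ref{back} for $r=0$. Then projection from it is Galois, and $\mathcal X/\langle g^k\rangle\cong\mathbb P^1$. Equivalently, for $m=d$ the homology has a nonempty fixed locus (the $d$ points on its axis), so we can cite Theorem~\ref{r1}(ii)(a). For $m=d-1$ we invoke Theorem~\ref{r1}(i)(a), checking $|\mathrm{Fix}(g^k)|\neq2$: the axis meets $\mathcal X$ in $d\ge4$ points. The Lüroth step then finishes the argument.

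The main obstacle is the middle step: proving that $g^k$ is genuinely a homology rather than some other diagonal element of order $d$ or $d-1$. For instance, $\mathrm{diag}(1,\zeta_d,\zeta_d^2)$ also acts on Fermat-type curves and has quotient of positive genus. The case analysis must use $k\ge2$ essentially, and I would first try to organize it by which of the three coordinate points lie on $\mathcal X$, since this determines whether pure powers $X_i^d$ appear.
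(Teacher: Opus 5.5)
Your overall strategy is sound. The paper gives no proof of this statement; it is quoted from Badr--Bars. Your mechanism is to show that $g^k$ is a homology, so that $\mathcal X/\langle g^k\rangle\cong\mathbb P^1$, and then descend to $\mathcal X/\langle g\rangle$ via L\"uroth. This is exactly how the paper handles the analogous cases in Theorems~\ref{caseiii,thm} and~\ref{casev,thm}. The descent step and the final appeal to Theorem~\ref{r1}(i)(a),(ii)(a) are fine. Cite these rather than Theorem~\ref{back}, which is stated only for $n\ge 2$.

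There is, however, one step that fails as written. You assert that the patterns forcing $\mathrm{ord}(g)\mid(d-2)d$ contradict $\mathrm{ord}(g)=kd$ or $k(d-1)$ with $k\ge2$. The gcd argument only excludes $k(d-1)$ there, because $\gcd(d-1,(d-2)d)=1$. The case $kd$ with $k\mid d-2$ genuinely occurs. For instance, $X_0^d+X_1^{d-1}X_2+X_1X_2^{d-1}=0$ is smooth for $d\ge4$. It admits $g=[\mathrm{diag}(1,\zeta,\zeta^{1-d})]$, with $\zeta$ a primitive $d(d-2)$-th root of unity, and $g$ has order $d(d-2)=kd$ with $k=d-2\ge2$.

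So this pattern must be treated like the others, and your framework does handle it. The forced monomials are $X_0^d$, $X_1^{d-1}X_2$, $X_2^{d-1}X_1$, which give $g=[\mathrm{diag}(1,\lambda,\lambda^{1-d})]$ with $\mathrm{ord}(g)=\mathrm{ord}(\lambda)$. If this order equals $kd$, then $\lambda^{k(1-d)}=\lambda^k$. Hence $g^k=[\mathrm{diag}(1,\lambda^k,\lambda^k)]$ is a homology of order $d$ with center $P_0\notin\mathcal X$. Only the $d^2-3d+3$ (Klein-type) pattern is genuinely vacuous.

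Relatedly, do not rely on Theorem~\ref{bb1} to match each pattern to a number on its list. Suppose all three coordinate points lie on $\mathcal X$ and the forced monomials form a $2$-cycle plus a tail, e.g.\ $X_0^{d-1}X_1$, $X_1^{d-1}X_0$, $X_2^{d-1}X_0$. Then the congruences alone only give $\mathrm{ord}(g)\mid(d-1)(d-2)$. You can close this case in either of two ways:
\begin{enumerate}[$(1)$]
\item Use irreducibility: some monomial must avoid the common variable $X_0$, and this forces $g$ to be a homology.
\item Check directly that $g^k=[\mathrm{diag}(1,1,\lambda_2^k)]$ is a homology of order $d-1$ when $\mathrm{ord}(g)=k(d-1)$. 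The order $kd$ is impossible there, since $\gcd(d,(d-1)(d-2))\mid 2$.
\end{enumerate}
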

In particular, in all cases covered by the above theorems, the hypersurface $\mathcal X$ admits a Galois point $P$, and the automorphism $g$ generates the Galois group of the projection from $P$.
Related phenomena for smooth curves on $\mathbb P^1\times \mathbb P^1$ are investigated in [\ref{bio:hs25l}].
In [\ref{bio:th23g},\ref{bio:th25a}],
groups consisting of automorphisms whose quotients are projective spaces are also studied.
The following theorem is also related to the notion of Galois skew lines in $\mathbb P^3$. 
Under these conditions, the quotient map is birationally equivalent to the projection from a pair of skew lines to $\mathbb P^1\times \mathbb P^1$, with a Galois group $\langle g\rangle$. See~[\ref{bio:th25g}] for details.
\begin{thm}\label{r4}$([\ref{bio:th25g},\ \mr{Theorem}\ 1.7])$.
Let $\mac X\subset \mathbb P^3$ be a smooth surface of degree $d\geq 5$, and let $g\in \mathrm{PGL}(4,k)$ be an automorphism of $\mathcal X$.
If one of the following holds, then $\mathcal X/\langle g\rangle$  is a rational surface.
\begin{enumerate}[$(a)$]
\item  ${\rm ord}(g)=d-2$ and ${\rm dim}\,{\rm Fix}(g)=1$;
\item ${\rm ord}(g)=d-1$ and ${\rm Fix}(g)$ contains a line;
\item  ${\rm ord}(g)=d$, ${\rm dim}\,{\rm Fix}(g)=0$, and $|{\rm Fix}(g)|\geq d+3$.
\end{enumerate}
\end{thm}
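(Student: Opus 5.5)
The plan is to reduce all three cases to Theorem \ref{back}. In each case I will show that, in suitable coordinates, a lift of $g$ to $\mathrm{GL}(4,k)$ has the form $\mathrm{diag}(e_m,e_m,1,1)$, where $m=\mathrm{ord}(g)$. I will also show that the two skew lines $P^1=W(X_2,X_3)$ and $P^{1\prime}=W(X_0,X_1)$ are positioned relative to $\mathcal X$ so that $\deg f_{P^1,P^{1\prime}}=m$. Theorem \ref{back} then makes $f_{P^1,P^{1\prime}}$ Galois with group $\langle g\rangle$. Hence $\mathcal X/\langle g\rangle$ is birational to $\mathbb P^1\times\mathbb P^1$, which is rational.

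The setup is the same throughout. Diagonalize a lift of $g$ as $\mathrm{diag}(\lambda_0,\dots,\lambda_3)$. Then $\mathrm{Fix}(g)$ is the union of the sets $\mathcal X\cap\mathbb P(V_\lambda)$ over the eigenspaces $V_\lambda$. The defining form $F$ of $\mathcal X$ is semi-invariant, so every monomial occurring in $F$ carries the same character. Smoothness enters in the standard way: for each coordinate point $p_i$ there must be a monomial of the form $X_i^{d}$ or $X_i^{d-1}X_j$ in $F$, since otherwise $\mathcal X$ is singular at $p_i$. Along a line contained in $\mathcal X$, the terms of $F$ that are linear in the complementary variables must have no common zero on the line.

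Case (c) is the cleanest, so I treat it first. An eigenspace of dimension $3$ would give a fixed curve, which contradicts $\dim\mathrm{Fix}(g)=0$. The possible eigenspace types are therefore $(1,1,1,1)$, $(2,1,1)$ and $(2,2)$.
\begin{itemize}
\item Type $(1,1,1,1)$ gives $|\mathrm{Fix}(g)|\le 4$.
\item Type $(2,1,1)$ gives at most $d+2$ fixed points. The line eigenspace is not contained in $\mathcal X$, so it meets $\mathcal X$ in at most $d$ points, and the two isolated eigenpoints add at most $2$ more.
\end{itemize}
The hypothesis $|\mathrm{Fix}(g)|\ge d+3$ therefore forces type $(2,2)$ with neither eigenline contained in $\mathcal X$. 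After scaling, $g=\mathrm{diag}(\zeta,\zeta,1,1)$, and $\mathrm{ord}(g)=d$ makes $\zeta$ a primitive $d$-th root of unity. Since neither line lies in $\mathcal X$, $\deg f=d$, and Theorem \ref{back} applies.

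Cases (a) and (b) need more work, and I expect forcing the block form $\mathrm{diag}(e_m,e_m,1,1)$ to be the main obstacle.

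In case (b), the fixed line $\ell\subset\mathcal X$ must lie in an eigenspace. That eigenspace cannot be a plane: then $\mathcal X$ would contain a fixed plane section, and a monomial check with $m=d-1\ge4$ should rule this out or reduce it to the Galois-point case. Take $\ell=W(X_2,X_3)$ with eigenvalue $1$. Because $\ell\subset\mathcal X$ and $\mathcal X$ is smooth along $\ell$, we can write
\[
F=X_2A(X_0,X_1)+X_3B(X_0,X_1)+(\text{terms of degree}\ge2\text{ in }X_2,X_3),
\]
where $A$ and $B$ have degree $d-1$ and no common zero on $\ell$. Invariance then forces $\lambda_2=\lambda_3$ whenever both $A$ and $B$ are nonzero, and in any case I plan to force this via the coordinate-point criterion applied at $p_2$ and $p_3$. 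With $\lambda:=\lambda_2=\lambda_3$, the complementary line $\ell'=W(X_0,X_1)$ is an eigenline with eigenvalue $\lambda$. It cannot lie in $\mathcal X$: if it did, $F$ would contain no pure $X_2,X_3$ terms, and the monomials $X_iX_j^{d-1}$ needed for smoothness at $p_2,p_3$ would force $\lambda^{d-2}=1$ together with $\lambda^{d-1}=1$, so $\lambda=1$, which is absurd. So $\deg f=d-1$, the monomials of $F$ force $\lambda^{d-1}=1$, and $\mathrm{ord}(g)=d-1$ makes $\lambda$ primitive.

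Case (a) runs the same way. The fixed curve comes from eigenlines contained in $\mathcal X$ or from an eigenplane. Using $m=d-2\ge3$, I would show by the same monomial bookkeeping that both eigenlines of a $(2,2)$ decomposition lie in $\mathcal X$ and that $g=\mathrm{diag}(\zeta,\zeta,1,1)$ with $\zeta$ of order $d-2$. Then $\deg f=d-2$, and Theorem \ref{back} finishes the proof. Within this case, excluding the eigenplane configuration and the $(2,1,1)$ configuration by these monomial congruences is where I expect the most casework.
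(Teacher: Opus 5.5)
Your proposal is correct and follows the mechanism the paper attributes to [\ref{bio:th25g}]. The paper quotes this theorem without proof, noting that in each case $\mathcal X\to\mathcal X/\langle g\rangle$ is birational to the Galois projection from a pair of skew lines to $\mathbb P^1\times\mathbb P^1$ with group $\langle g\rangle$; this is exactly your reduction to Theorem~\ref{back}, with $\deg f=d-2$, $d-1$, $d$ in cases (a), (b), (c).

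The eigenspace bookkeeping you sketch matches the paper's own Type I--III computations (Theorem~\ref{casei,thm}, Lemmas~\ref{caseii,lem} and~\ref{caseiii,1}). Two points to tighten. First, $A,B\neq0$ already follows from their having no common zero on $\ell$, since each has degree $d-1\ge1$; no coordinate-point argument is needed. Second, in (b) the eigenplane case is excluded outright: since $F_{d-1}=0$ in Type I, the fixed plane section is a smooth plane curve and so contains no line.
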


For variables $X_1,\ldots, X_m$ and $d\geq 0$, let 
$k[X_1,\ldots ,X_m]_d$ be the $k$-vector space of forms of degree $d$ in the variables $X_1,\ldots, X_m$.
Let $\mathbb P^{n+1}$ have homogeneous coordinates $[X_0:\cdots:X_{n+1}]$.
For each $i\in\{0,\ldots,n+1\}$, let
\[
P_i:=[0:\cdots:0:\underbrace{1}_{\text{$i$-th entry}}:0:\cdots:0]
\]
be the $i$-th coordinate point.
Let $\mathcal X\subset \mathbb P^{n+1}$ be a hypersurface defined by a homogeneous
polynomial
\[
F(X_0,\ldots,X_{n+1})=0
\]
of degree $d$.
If $P_i\notin \mathcal X$, then $F(X_0,\ldots,X_{n+1})$ contains the monomial $X_i^d$.
If $P_i\in \mathcal X$ and $\mathcal X$ is smooth at $P_i$, then
$F(X_0,\ldots,X_{n+1})$ contains a monomial of the form $X_i^{d-1}X_j$ for some $j\ne i$.

For $A=(a_{ij})\in{\rm GL}(n+2,k)$ and
$F(X_0,\ldots,X_{n+1})\in k[X_0,\ldots,X_{n+1}]_d$, 
we define the action of $A$ on $F(X_0,\ldots,X_{n+1})$ as follows:
\[ A^{\ast}F(X_0,\ldots,X_{n+1}):=F\left(\sum_{i=1}^{n+2}a_{1i}X_{i-1},\ldots,\sum_{i=1}^{n+2}a_{n+2\,i}X_{i-1}\right).\]
Let $\mac X\subset \mb P^{n+1}$ be a hypersurface of degree $d$ defined by $F(X_0,\ldots,X_{n+1})=0$.
 If there exists $t \in k^{\times}$ such that
\[A^{\ast}F(X_0,\ldots,X_{n+1})=tF(X_0,\ldots,X_{n+1}),\] then $A$ induces an automorphism of $\mac X$.

Let $g\in \mathrm{PGL}(n+2,k)$, and choose a representative
$A\in \mathrm{GL}(n+2,k)$.
We write $r(g)$ for the number of distinct eigenvalues of $A$.
This number is well-defined, i.e. independent of the choice of representative,
since replacing $A$ by a scalar multiple does not change its set of eigenvalues.
Let ${\rm ord}(g)=:m$ and set $r:=r(g)$. Then
\[
  r \leq \min\{m,\,n+2\}.
\]
Let \( \lambda_1,\ldots,\lambda_r \) be the distinct eigenvalues of \( A \), 
and let \( W_i \subset k^{n+2} \) be the corresponding eigenspace for $\lambda_i$.
We denote by \( \mathbb{P}(W_i)\subset \mathbb{P}^{n+1} \) the projective 
subspace associated with \( W_i \).
If $g$ is an automorphism of a hypersurface $\mathcal X\subset\mathbb P^{n+1}$, then 
\[
  \mathrm{Fix}(g) = \bigcup_{i=1}^r \mathbb{P}(W_i)\cap\mac X.
\]
If the codimension $\mathrm{codim}\,\mathrm{Fix}(g)$ of $\mathrm{Fix}(g)$ is at most $2$, then there exists some \( i \) such that
$\dim \bigl(\mathbb{P}(W_i)\cap \mathcal X\bigr) \geq n-2$,
and consequently $\dim \mathbb P(W_i)\ge n-2$.
Since $\dim \mathbb P(W_i)=\dim W_i-1$, this implies $\dim W_i\ge n-1$.
As $\sum_{i=1}^r \dim W_i=n+2$, it follows that $r\le 4$.
By multiplying \( A \) with a suitable element of \( k^{\times} \), 
we may assume that one of the eigenvalues of \( A \) is equal to \( 1 \).
When \(n=2\), i.e., in the case of \(\mathbb{P}^3\), we always have \(r \leq 4\).
The study of automorphisms of smooth projective hypersurfaces with fixed loci of codimension at most $2$ reduces to the study of automorphisms of smooth hypersurfaces in \(\mathbb{P}^3\).

For $a_1,\ldots,a_m\in k^{\times}$, we write
$D_{n+2}(a_1,\ldots,a_m,1_{n+2-m})$
for the diagonal matrix of size $(n+2)\times(n+2)$
whose diagonal entries are $a_1,\ldots,a_m$
followed by $n+2-m$ entries equal to $1$.
Let
\[
g=[D_{n+2}(a_1,\ldots,a_m,1_{n+2-m})]\in \mathrm{PGL}(n+2,k)
\]
be an automorphism of a smooth hypersurface $\mathcal X\subset \mathbb P^{n+1}$
such that ${\rm Fix}(g)$ has codimension at most two in $\mathcal X$,
and let $F$ be an irreducible component of ${\rm Fix}(g)$ of codimension
at most $2$ in $\mathcal X$.
Then necessarily $m\le 3$.
After permuting the diagonal entries, we may assume that $g$ and $F$ are given by one
of the following normal forms:
\begin{center}
\renewcommand{\arraystretch}{1.3}
\begin{tabular}{cll}
\textbf{Type I}   & $D_{n+2}(a_1,1_{n+1})$, 
                    & $F\subset\mathcal X\cap W(X_0)$;\\
\textbf{Type II}  & $D_{n+2}(a_1,a_1,1_{n})$, 
                    & $F\subset \mathcal X\cap W(X_0,X_1)$;\\
\textbf{Type III} & $D_{n+2}(a_1,a_2,1_{n})$, 
                    & $F\subset \mathcal X\cap W(X_0,X_1)$;\\
\textbf{Type IV}  & $D_{n+2}(a_1,a_1,a_1,1_{n-1})$, 
                    & $F=\mathcal X\cap W(X_0,X_1,X_2)$;\\
\textbf{Type V}   & $D_{n+2}(a_1,a_1,a_2,1_{n-1})$, 
                    & $F=\mathcal X\cap W(X_0,X_1,X_2)$;\\
\textbf{Type VI}  & $D_{n+2}(a_1,a_2,a_3,1_{n-1})$, 
                    & $F=\mathcal X\cap W(X_0,X_1,X_2)$.
\end{tabular}
\end{center}
Here $a_1,a_2,a_3\in k^{\times}$ are pairwise distinct.

In the above classification, the codimension of ${\rm Fix}(g)$ in $\mathcal X$ 
is described as follows.
For Type I, ${\rm Fix}(g)$ has codimension $1$ in $\mathcal X$.
For Types II and III, ${\rm Fix}(g)$ has codimension $1$ or $2$ in $\mathcal X$.
For Types IV--VI, ${\rm Fix}(g)$ has codimension $2$ in $\mathcal X$.
\section{Proof of Theorems \ref{1.1} and \ref{1.2}}
We prove Theorems~\ref{1.1} and~\ref{1.2} by considering separately
each of the diagonal normal forms listed in Section~3.
\subsection{Type I}
We first consider Type~I.
We write the defining equation of $\mathcal{X}$ as
\[
F(X_0,\ldots,X_{n+1})
= \sum_{i=0}^{d} F_{d-i}(X_1,\ldots,X_{n+1})X_0^{i}
\]
where $F_{d-i}(X_1,\ldots,X_{n+1})\in k[X_1,\ldots,X_{n+1}]_{d-i}$ for $i=0,\ldots,d$.
Since $g=[D(a,I_{n+1})]$ is an automorphism of $\mathcal{X}$, there exists $t\in k^{\times}$ such that
\[
D(a,I_{n+1})^*F(X_0,\ldots,X_{n+1})
= t\,F(X_0,\ldots,X_{n+1}).
\]
Since $[D(a,I_{n+1})]^*X_0=aX_0$ and $[D(a,I_{n+1})]^*X_i=X_i$ for $i=1,\ldots,n+1$, we obtain 
\begin{equation}\label{casei}
\sum_{i=0}^{d} a^iF_{d-i}(X_1,\ldots,X_{n+1})X_0^{i}=t
\sum_{i=0}^{d}F_{d-i}(X_1,\ldots,X_{n+1})X_0^{i}.
\end{equation}
\begin{pro}\label{casei,pro}
Let $\mathcal X \subset \mathbb P^{n+1}$ be a smooth hypersurface of degree $d$.
We assume that $\mathcal{X}$ admits an automorphism $g:=[D(a,I_{n+1})]$ where $a\in k^{\times}\setminus\{1\}$.
If $n\geq 3$, then ${\rm Fix}(g)$ does not contain a projective subspace of dimension $n-1$ or $n-2$.
\end{pro}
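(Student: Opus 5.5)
The plan is to use the eigenspace description of $\mathrm{Fix}(g)$ from Section 2 to show that the only positive-dimensional part of $\mathrm{Fix}(g)$ is a smooth hypersurface of degree $d$ in $W(X_0)\cong\mathbb P^n$, and then to rule out linear subspaces of the two relevant dimensions on it.

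\textbf{Step 1: the defining equation.} In \eqref{casei}, the coefficient of $X_0^0$ is $F_d$. We have $F_d\neq 0$: otherwise $X_0$ divides $F$, so $\mathcal X$ would be reducible, which contradicts smoothness. Comparing the $X_0^0$-terms therefore gives $t=1$. Then $(a^i-1)F_{d-i}=0$ for every $i$. Let $m:=\mathrm{ord}(a)\geq 2$. Then
\[
F=F_d(X_1,\ldots,X_{n+1})+X_0^{m}F_{d-m}+X_0^{2m}F_{d-2m}+\cdots .
\]

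\textbf{Step 2: the fixed locus.} The eigenspaces of $D(a,I_{n+1})$ give
\[
\mathrm{Fix}(g)=\bigl(\mathcal X\cap W(X_0)\bigr)\cup\bigl(\{P_0\}\cap\mathcal X\bigr).
\]
On $W(X_0)\cong\mathbb P^n$ with coordinates $X_1,\dots,X_{n+1}$, the locus $\mathcal X\cap W(X_0)$ is the hypersurface $Y:=\{F_d=0\}$.

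\textbf{Step 3: $Y$ is smooth.} At a point with $X_0=0$, every term of $\partial F/\partial X_0$ carries a factor $X_0^{m-1}$ with $m\geq 2$, so $\partial F/\partial X_0$ vanishes there. Also $\partial F/\partial X_j=\partial F_d/\partial X_j$ there for $j\geq 1$. So a singular point of $Y$ would be a singular point of $\mathcal X$, and hence $Y$ is a smooth hypersurface of degree $d$ in $\mathbb P^n$. Since $n\geq 3$, $Y$ has dimension $n-1\geq 2$, and a smooth hypersurface of dimension at least $1$ in $\mathbb P^n$ is irreducible. It is therefore an irreducible variety of degree $d\geq 3$, in particular not linear.

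\textbf{Step 4: excluding linear subspaces.} Dimension $n-1$: a $\mathbb P^{n-1}$ inside $\mathrm{Fix}(g)$ would have to lie in $Y$, because $\{P_0\}$ is a point. Since $Y$ is irreducible of dimension $n-1$, it would then equal that $\mathbb P^{n-1}$, contradicting $d\geq 3$. Dimension $n-2$: a $\mathbb P^{n-2}$ inside $\mathrm{Fix}(g)$ again lies in $Y$. I would use the standard bound that a smooth hypersurface of dimension $N$ contains no linear subspace of dimension $>N/2$. Here $N=n-1$, and $n-2>(n-1)/2$ holds exactly when $n>3$, so this settles $n\geq 4$.

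\textbf{Main obstacle: the case $n=3$.} Here $Y$ is a smooth surface in $\mathbb P^3$, and such surfaces can contain lines. For example, take $F=X_0^3+X_1^3+X_2^3+X_3^3+X_4^3$ with $a$ a primitive cube root of unity; then $Y$ is the Fermat cubic surface, which contains $27$ lines. So the literal containment statement appears to fail for $n=3$. I would therefore expect the proposition to be meant in the sense used in the Type classification of Section 2: no irreducible component of $\mathrm{Fix}(g)$ is a projective subspace of dimension $n-1$ or $n-2$. In that form Steps 1–3 finish the proof for all $n\geq 3$. The components of $\mathrm{Fix}(g)$ are the non-linear irreducible $Y$ of dimension $n-1$ and possibly the point $P_0$, which has dimension $0<n-2$.
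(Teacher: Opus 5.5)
Your analysis is correct, and on the decisive point it is more accurate than the paper's own proof.

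\textbf{Where the paper's proof goes wrong.} The paper starts as your Steps 1 and 3 do: $F_d\neq 0$, hence $t=1$ and $F_{d-1}=0$ by \eqref{casei}. It then reduces to the case $W(X_0,X_i,X_j)\subset\mathrm{Fix}(g)$ and asserts $F_d=G\,X_iX_j$ with $\deg G=d-2$. From this it gets a singular point on $\{G=0\}\cap W(X_0,X_i,X_j)$ for every $n\ge 3$. That factorization does not follow. The inclusion $W(X_0,X_i,X_j)\subset\mathcal X$ only gives $F_d\in(X_i,X_j)$, i.e.\ $F_d=G_1X_i+G_2X_j$. Because $F_{d-1}=0$, common zeros of $G_1,G_2$ on $W(X_0,X_i,X_j)\cong\mathbb P^{n-2}$ are singular points of $\mathcal X$. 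However, such common zeros are only guaranteed when $n-2\ge 2$.

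\textbf{Comparison of routes.} The repaired computation above is exactly the proof of the bound you quote: a smooth hypersurface of dimension $N$ contains no linear space of dimension $>N/2$, applied to $Y=\{F_d=0\}\subset W(X_0)$. So for $n\ge 4$ your route and the corrected paper route agree in substance. Working through the smooth irreducible hypersurface $Y$ also buys you something extra. It settles the $(n-1)$-dimensional case directly by irreducibility, rather than deducing it from the $(n-2)$-dimensional one.

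\textbf{The case $n=3$.} Your Fermat example is a genuine counterexample. Take $F=\sum_{i=0}^{4}X_i^3$ and $a$ a primitive cube root of unity. Then $g$ preserves $F$, and the eigenspace for the eigenvalue $1$ is $W(X_0)$. The line $\{X_0=X_1+X_2=X_3+X_4=0\}$ lies in $\mathcal X\cap W(X_0)\subset\mathrm{Fix}(g)$ and has dimension $1=n-2$.

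So the proposition holds as stated for subspaces of dimension $n-1$ (indeed for all $n\ge 2$ once $d\ge 2$). For dimension $n-2$ it holds only when $n\ge 4$. The paper's proof misses the $n=3$ failure because of the incorrect factorization. Your component-wise version is correct for all $n\ge 3$ and is the natural repair: no irreducible component of $\mathrm{Fix}(g)$ is a linear space of dimension $n-1$ or $n-2$.
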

\begin{proof}
We assume that $n\geq3$.
It suffices to show that $\operatorname{Fix}(g)$ does not contain
a projective subspace of dimension $n-2$.
Since $\mathcal{X}$ is smooth, $F_d(X_1,\ldots,X_{n+1})\neq 0$.
Moreover, since $a\neq1$ and $F_d(X_1,\ldots,X_{n+1})\neq 0$, the equation~$(\ref{casei})$ implies that $F_{d-1}(X_1,\ldots,X_{n+1})=0$.
We assume that $W(X_0,X_i,X_j)\subset {\rm Fix}(g)$ for $1\leq i<j$.
Then
\[
F_d(X_1,\ldots,X_{n+1})=G(X_1,\ldots,X_{n+1})X_iX_j
\]
for some $G(X_1,\ldots,X_{n+1})\in k[X_1,\ldots,X_{n+1}]_{d-2}$.
Since $n\geq3$, the intersection
\[
\{G(X_1,\ldots,X_{n+1})=0\}\cap W(X_0,X_i,X_j).
\]
is nonempty.
Since $F_{d-1}(X_1,\ldots,X_{n+1})=0$,
every point in this intersection is a singular point of $\mathcal X$. 
This contradicts the smoothness of $\mathcal X$. 
Therefore, if $n\geq 3$, then ${\rm Fix}(g)$ does not contain a linear subspace of dimension $n-1$ or $n-2$.
\end{proof}
\begin{thm}\label{casei,thm}
Let $\mathcal{X}\subset \mathbb{P}^{n+1}$ be a smooth hypersurface of degree $d$ for $n\geq2$.
We assume that $\mathcal{X}$ admits an automorphism $g:=[D(a,I_{n+1})]$ where $a\in k^{\times}\setminus\{1\}$.
Then the order $\mr{ord}(g)$ of $g$ divides 
\[
d\quad \mr{or}\quad d-1.
\]
Moreover, if $\operatorname{ord}(g) = d$ or $d-1$, then
$\mathcal{X} / \langle g \rangle$ is a rational variety.
\end{thm}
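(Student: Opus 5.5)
The plan is to compare coefficients in equation~(\ref{casei}) and use smoothness of $\mathcal X$ to pin down which $X_0$-powers can occur. Since $\mathcal X$ is smooth, $F$ is not divisible by $X_0$, so $F_d(X_1,\ldots,X_{n+1})\neq 0$. Comparing the $X_0^0$-parts in (\ref{casei}) then forces $t=1$, and for every $i$ with $F_{d-i}\neq 0$ we get $a^i=1$. Next I would show that $F_{d-i}\neq 0$ for some $i\in\{d-1,d\}$. If $F_0=0$, then $P_0\in\mathcal X$, and smoothness at $P_0$ requires a monomial $X_0^{d-1}X_j$ with $j\neq 0$, i.e.\ $F_1\neq 0$. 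If $F_0\neq 0$, the monomial $X_0^d$ appears. Hence $a^d=1$ or $a^{d-1}=1$. Because $g=[D(a,I_{n+1})]$ has order equal to the multiplicative order of $a$ (no scalar ambiguity, as the representative has the eigenvalue $1$ with multiplicity $n+1\ge 2$), $\operatorname{ord}(g)$ divides $d$ or $d-1$.

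For the rationality statement, suppose first that $\operatorname{ord}(g)=d$, so that $a=e_d$. I would place this in the setting of Theorem~\ref{back} with $r=0$ and $P^0=P_0$. The only point to check is $P_0\notin\mathcal X$. If $P_0\in\mathcal X$, then $F_1\neq 0$ and so $a^{d-1}=1$; together with $a^d=1$ this gives $a=1$, a contradiction. Thus $P_0\notin\mathcal X$ and the projection from $P_0$ has degree $d$. Since $g$ is given by the matrix $\begin{pmatrix} e_d & 0\\ 0& I_{n+1}\end{pmatrix}$, Theorem~\ref{back} shows the projection is Galois with group $\langle g\rangle$. Therefore $\mathcal X/\langle g\rangle$ is birational to $\mathbb P^n$ and hence rational.

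Now suppose $\operatorname{ord}(g)=d-1$, so that $a=e_{d-1}$. Since $a^d=a\neq1$, the coefficient $F_0$ must vanish, so $P_0\in\mathcal X$. The projection from $P_0$ then has degree $d-1$, and Theorem~\ref{back} again applies with $m=d-1$, showing that $\mathcal X/\langle g\rangle$ is rational.

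The main obstacle I expect is in the bookkeeping rather than the ideas. One must justify carefully that $t=1$ via $F_d\neq0$, and that the order of $[D(a,I_{n+1})]$ in $\mathrm{PGL}$ equals the order of $a$. One must also handle degenerate small cases such as $d-1=1$, where $\operatorname{ord}(g)\ge2$ is impossible, and make sure Theorem~\ref{back} is used exactly in its stated matrix form. The smoothness-at-$P_0$ argument is the key input that rules out both $X_0^{d}$ and $X_0^{d-1}X_j$ being absent.
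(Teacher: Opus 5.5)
Your proof is correct. The divisibility part coincides with the paper's argument: $F_d\neq0$ forces $t=1$, and then the $X_0^{d}$ or $X_0^{d-1}$ coefficient gives $a^d=1$ or $a^{d-1}=1$, according to whether $P_0\notin\mathcal X$ or $P_0\in\mathcal X$.

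For the rationality statement you take a different route. The paper simply quotes Proposition~\ref{casei,pro} together with the earlier Galois-point result Theorem~\ref{r1}. You instead first read off the position of $P_0$ from the order:
\begin{itemize}
\item[] $P_0\notin\mathcal X$ if $\operatorname{ord}(g)=d$, since otherwise $a^{d-1}=a^{d}=1$;
\item[] $P_0\in\mathcal X$ if $\operatorname{ord}(g)=d-1$, since $a^{d}=a\neq 1$ kills $F_0$.
\end{itemize}
Thus the projection from $P_0$ has degree exactly $\operatorname{ord}(g)$, and you then apply Theorem~\ref{back} with $r=0$.

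This is more self-contained and more uniform than the paper's citation. It does not need the hypothesis $d\ge4$ of Theorem~\ref{r1}. Nor does it need a separate treatment of the case $n=2$, $\operatorname{ord}(g)=d-1$, where Theorem~\ref{r1} requires $\operatorname{Fix}(g)$ to contain a non-rational curve. That fact would still have to be checked, e.g.\ from $F_{d-1}=0$ and the resulting smoothness of the plane curve $\{F_d=0\}\subset W(X_0)$. The paper's version is shorter only because it outsources exactly this Galois-point identification to Hayashi's earlier work. Geometrically, both arguments show that $P_0$ is a Galois point whose Galois group is $\langle g\rangle$.
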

\begin{proof}
Since $\mathcal{X}$ is smooth, we have $F_d(X_1,\ldots,X_{n+1})\neq 0$.
By the equation~$(\ref{casei})$,
\[F_d(X_1,\ldots, X_{n+1})=tF_d(X_1,\ldots, X_{n+1}).\]
It follows that 
\[
t=1.
\]

We assume that $P_0\notin \mathcal{X}$. Then $F_0(X_1,\ldots,X_{n+1})\neq 0$. 
Comparing the coefficients of $X_0^{d}$ in \eqref{casei}, we obtain
\[
a^d=1.
\]
Therefore, $\mr{ord}(g)$ divides $d$.

We assume that $P_0\in \mathcal{X}$. Then $F_0(X_1,\ldots,X_{n+1})=0$.
Since $\mathcal X$ is smooth at $P_0$, $F_1(X_1,\ldots,X_{n+1})\neq0$. 
Comparing the coefficients of $X_0^{d-1}$ in \eqref{casei}, we obtain
\[
a^{d-1}=1.
\]
Hence, $\mr{ord}(g)$ divides $d-1$.

We assume that $\operatorname{ord}(g)=d$ or $d-1$.  
By Proposition~\ref{casei,pro} and Theorem~\ref{r1}, $\mathcal{X}/\langle g\rangle$ is a rational variety.
\end{proof}
By Theorem~\ref{casei,thm}, under Type I, the order $\operatorname{ord}(g)$ divides $d$ or $d-1$. 
Consequently, if $\operatorname{ord}(g)\ge 3$, then $\operatorname{ord}(g)$ cannot divide $d-2$.
In particular, the case $\operatorname{ord}(g)=d-2$ does not occur.

\subsection{Types II and III}
We next consider Types~II and~III, which can be treated simultaneously,
since both correspond to diagonal actions affecting only the coordinates $X_0$ and $X_1$.
We write the defining equation of $\mathcal{X}$ as
\[
F(X_0,\ldots,X_{n+1})
= \sum_{i,j=0}^{d} F_{i,j}(X_2,\ldots,X_{n+1})X_0^{i}X_1^j
\]
where $F_{i,j}(X_2,\ldots,X_{n+1})\in k[X_2,\ldots,X_{n+1}]_{d-i-j}$ for $i,j=0,\ldots,d$.
 We assume that $\mathcal{X}$ has an automorphism $[D(a,b,I_{n})]$ where $a,b\in k^{\times}\backslash\{1\}$.
There exists $t \in k^{\times}$ such that $D(a,b,I_n)^{*} F(X_0,\ldots,X_{n+1})= tF(X_0,\ldots,X_{n+1})$, and hence
\begin{equation}\label{casesii,iii}
\sum_{i,j=0}^{d} a^ib^jF_{i,j}(X_2,\ldots,X_{n+1})X_0^{i}X_1^j
= t\sum_{i,j=0}^{d} F_{i,j}(X_2,\ldots,X_{n+1})X_0^{i}X_1^j.
\end{equation}
We define 
\begin{equation*}
\begin{split}
F_{u\,_{\underline{0,1}}}(X_0,\ldots,X_{n+1}):=&\sum_{i+j=u}F_{i,j}(X_2,\ldots,X_{n+1})\, X_0^iX_1^j
\end{split}
\end{equation*}
for $u=0,\ldots, d$.
\begin{lem}\label{casesii,iii,lem1}
We have that $\sum_{u=0,1} F_{u\,_{\underline{0,1}}}(X_0,\ldots,X_{n+1})\neq0$.
\end{lem}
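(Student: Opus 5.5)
The claim is that $F_{0\,_{\underline{0,1}}}+F_{1\,_{\underline{0,1}}}$, the part of $F$ of degree at most one in $X_0,X_1$, is not identically zero. I will argue by contradiction using smoothness of $\mathcal X$ along the linear subspace $W(X_0,X_1)$. Suppose $F_{0\,_{\underline{0,1}}}=F_{1\,_{\underline{0,1}}}=0$, that is, $F_{0,0}=F_{1,0}=F_{0,1}=0$. Then every monomial of $F$ has total degree at least two in $X_0,X_1$, and so
\[
F=\sum_{u\geq 2}F_{u\,_{\underline{0,1}}}\in (X_0,X_1)^2 .
\]

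First, $F_{0,0}=0$ gives $W(X_0,X_1)\subset\mathcal X$. Second, for any $G\in (X_0,X_1)^2$ every partial derivative $\partial G/\partial X_l$ lies in $(X_0,X_1)$. This holds for $l\geq 2$ because differentiating in $X_l$ does not lower the $(X_0,X_1)$-degree. It holds for $l=0,1$ because differentiating lowers that degree by exactly one, leaving degree at least one. Hence all partials of $F$ vanish on $W(X_0,X_1)$.

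Since $W(X_0,X_1)\cong\mathbb P^{n-1}$ is nonempty ($n\geq 2$), every one of its points is a singular point of $\mathcal X$. This contradicts smoothness and proves the lemma.

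There is no real obstacle here. The only point to check is that $W(X_0,X_1)$ is nonempty, which it is for $n\geq 0$ as a linear subspace of $\mathbb P^{n+1}$. Unlike Proposition~\ref{casei,pro}, no dimension count with an auxiliary hypersurface is needed, because the vanishing is forced on all of $W(X_0,X_1)$. The automorphism hypothesis \eqref{casesii,iii} is not used; the lemma is purely a smoothness statement, and it will later be combined with \eqref{casesii,iii} to pin down $t$ and the order of $g$.
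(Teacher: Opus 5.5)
Your proposal is correct and follows the same argument as the paper. If the part of $F$ of degree at most one in $X_0,X_1$ vanishes, then $F\in(X_0,X_1)^2$, so $W(X_0,X_1)\subset\mathcal X$ and every point of it is singular, contradicting smoothness; you spell out the partial-derivative check that the paper leaves implicit. One small slip that does not affect the proof: $W(X_0,X_1)$ is nonempty only for $n\ge 1$ (it is empty in $\mathbb P^1$), not for all $n\ge 0$, but this is harmless under the standing hypothesis $n\ge 2$.
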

\begin{proof}
If $\sum_{u=0,1} F_{u\,_{\underline{0,1}}}(X_0,\ldots,X_{n+1})=0$, then $W(X_0,X_1)\subset \mathcal X$.
Then every point of $W(X_0,X_1)$ is a singular point of $\mathcal{X}$.
This contradicts the smoothness of $\mathcal{X}$.
\end{proof}
After possibly permuting the coordinates $X_0$ and $X_1$, it suffices to consider the following cases:
\begin{itemize}
\item \(\mathcal{X} \cap \{P_0, P_1\} = \emptyset\);
\item \(\mathcal{X} \cap \{P_0, P_1\} = \{P_1\}\);
\item \(\mathcal{X} \cap \{P_0, P_1\} = \{P_0, P_1\}\).
\end{itemize}
In what follows, we work under this setup and focus on Types~II and~III.
\subsection{Type II}
We now consider Type II.
Throughout this subsection, we assume that $a=b$.
\begin{lem}\label{caseii,lem}
Let $\mathcal{X}\subset \mathbb{P}^{n+1}$ be a smooth hypersurface of degree $d$ for $n\geq2$.
We assume that $\mathcal{X}$ admits an automorphism $g:=[D(a,a,I_{n})]$ where $a\in k^{\times}\setminus\{1\}$.
Then one of the following holds:
\begin{enumerate}[$(i)$]
\item \( t = a \), $n=2$, and 
\[\sum_{u=0,1} F_{u\,_{\underline{0,1}}}(X_0,\ldots,X_{n+1})= F_{1,0}(X_2,\ldots,X_{n+1})X_0 +F_{0,1}(X_2,\ldots,X_{n+1})X_1.\]
\item \( t = 1 \) and \[\sum_{u=0,1} F_{u\,_{\underline{0,1}}}(X_0,\ldots,X_{n+1})= F_{0,0}(X_2,\ldots,X_{n+1}).\]
\end{enumerate}
\end{lem}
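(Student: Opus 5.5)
With $a=b$, equation \eqref{casesii,iii} reads
\[
a^{u}F_{u\,_{\underline{0,1}}}=t\,F_{u\,_{\underline{0,1}}}\qquad (u=0,\ldots,d),
\]
because every monomial $X_0^iX_1^j$ with $i+j=u$ is multiplied by $a^u$. Hence $F_{u\,_{\underline{0,1}}}\neq0$ forces $a^u=t$. Since $a\neq1$, at most one of $F_{0\,_{\underline{0,1}}}=F_{0,0}$ and $F_{1\,_{\underline{0,1}}}=F_{1,0}X_0+F_{0,1}X_1$ can be nonzero. By Lemma~\ref{casesii,iii,lem1}, at least one of them is nonzero. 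So we get exactly two cases.

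\textbf{Case $F_{0,0}\neq0$.} Then $t=a^0=1$ and $F_{1\,_{\underline{0,1}}}=0$. This is statement $(ii)$, and nothing more needs to be checked.

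\textbf{Case $F_{0,0}=0$.} Then $F_{1\,_{\underline{0,1}}}\neq0$, so $t=a$, and the sum reduces to $F_{1,0}X_0+F_{0,1}X_1$. It remains to prove $n=2$, and this is the main point. Here $W(X_0,X_1)\cong\mathbb P^{n-1}$ (coordinates $X_2,\ldots,X_{n+1}$) lies in $\mathcal X$. At a point $p\in W(X_0,X_1)$ the partial derivatives $\partial F/\partial X_k$ for $k\ge2$ vanish, since every monomial of $F$ involves $X_0$ or $X_1$. The only nonzero contributions to $\partial F/\partial X_0$ and $\partial F/\partial X_1$ at $p$ come from the terms linear in $X_0,X_1$, giving
\[
\frac{\partial F}{\partial X_0}(p)=F_{1,0}(p),\qquad \frac{\partial F}{\partial X_1}(p)=F_{0,1}(p).
\]
Smoothness therefore requires that the two forms $F_{1,0},F_{0,1}\in k[X_2,\ldots,X_{n+1}]_{d-1}$ have no common zero on $\mathbb P^{n-1}$.

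For $d\ge2$ these forms have positive degree, so they are either nonconstant or zero. If $n-1\ge2$, two such forms in at least three variables always have a common zero, since two hypersurfaces in $\mathbb P^{n-1}$ with $n-1\ge2$ intersect (and a zero form vanishes everywhere). This contradicts smoothness, so $n\le2$. Combined with the standing hypothesis $n\ge2$, we get $n=2$, which is statement $(i)$.

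The only delicate step is the Jacobian computation along $W(X_0,X_1)$ together with the dimension count. The degenerate possibility $d=1$ (or constant $F_{1,0},F_{0,1}$) is excluded by the paper's standing assumption $d\ge3$.
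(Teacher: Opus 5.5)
Your proof is correct and follows essentially the same route as the paper. You compare the parts of \eqref{casesii,iii} of degree $0$ and $1$ in $X_0,X_1$ and use Lemma~\ref{casesii,iii,lem1} to get exactly one of $t=1$ (with $F_{1,0}X_0+F_{0,1}X_1=0$) or $t=a$ (with $F_{0,0}=0$). When $t=a$ and $n\ge3$, the common zeros of $F_{1,0}$ and $F_{0,1}$ on $W(X_0,X_1)$ are singular points of $\mathcal X$; your explicit Jacobian computation along $W(X_0,X_1)$ simply spells out this step, which the paper leaves implicit.
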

\begin{proof}
By the equation (\ref{casesii,iii}),
\[
\sum_{0\leq i+j\leq 1} a^{i+j}F_{i,j}(X_2,\ldots,X_{n+1})X_0^{i}X_1^j
= t\sum_{0\leq i+j\leq 1}F_{i,j}(X_2,\ldots,X_{n+1})X_0^{i}X_1^j.
\]
By Lemma \ref{casesii,iii,lem1}, $\sum_{u=0,1} F_{u\,_{\underline{0,1}}}(X_0,\ldots,X_{n+1})\neq0$.
By the equation (\ref{casesii,iii}), we have that $t=a$ or $t=1$.
Moreover,
if $t=a$ (resp. $t=1$), then $F_{0,0}(X_2,\ldots,X_{n+1})=0$ (resp. $F_{1,0}(X_2,\ldots,X_{n+1})X_0 +F_{0,1}(X_2,\ldots,X_{n+1})X_1=0$).

We assume that $t=a$.
Then
\[
\begin{split}
F(X_0,\ldots ,X_{n+1})=&\sum_{2\leq i+j\leq d}F_{i,j}(X_2,\ldots,X_{n+1})X^i_0X^j_1\\
&\ \ +F_{1,0}(X_2,\ldots,X_{n+1})X_0 +F_{0,1}(X_2,\ldots,X_{n+1})X_1.
\end{split}
\]
If $n\geq 3$, then the intersection
\[
\{F_{1,0}(X_2,\ldots,X_{n+1})=0\}\cap \{F_{0,1}(X_2,\ldots,X_{n+1})=0\}\cap W(X_0,X_1)
\]
is nonempty.
Every point in this intersection is a singular point of $\mathcal X$. 
This contradicts the smoothness of $\mathcal X$. 
Thus, $n=2$.
\end{proof}
\begin{pro}\label{caseii,pro}
Let $\mathcal X \subset \mathbb P^{n+1}$ be a smooth hypersurface of degree $d$ for $n\geq2$.
We assume that $\mathcal{X}$ admits an automorphism $g:=[D(a,a,I_{n})]$ where $a\in k^{\times}\backslash\{1\}$.
If $n\geq 3$, then ${\rm Fix}(g)$ does not contain a projective subspace of dimension $n-2$ or $n-1$.
\end{pro}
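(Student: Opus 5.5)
The plan is to follow the pattern of Proposition~\ref{casei,pro}. First I determine where a linear subspace of dimension $n-2$ or $n-1$ inside ${\rm Fix}(g)$ can sit, and then I derive a singular point of $\mathcal X$ from it.

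For $g=[D(a,a,I_n)]$ with $a\neq 1$, the eigenspaces are $\mathbb P(W_1)=\{X_2=\cdots=X_{n+1}=0\}\cong\mathbb P^1$ and $\mathbb P(W_2)=W(X_0,X_1)\cong\mathbb P^{n-1}$. Hence ${\rm Fix}(g)=(\mathbb P(W_1)\cap\mathcal X)\cup(W(X_0,X_1)\cap\mathcal X)$. Since $n\geq 3$, the first piece has dimension at most $1<n-2$. So any projective subspace $L$ of dimension at least $n-2$ contained in ${\rm Fix}(g)$ must lie in $W(X_0,X_1)\cap\mathcal X$. It therefore suffices to rule out $\dim L=n-2$, because a subspace of dimension $n-1$ contains one of dimension $n-2$.

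By Lemma~\ref{caseii,lem}, since $n\geq 3$ we are in case $(ii)$: $t=1$ and the part of $F$ of degree at most one in $X_0,X_1$ is just $F_{0,0}(X_2,\ldots,X_{n+1})$. In particular $F_{1,0}=F_{0,1}=0$. Now suppose $L\subset W(X_0,X_1)\cap\mathcal X$ with $\dim L=n-2$. After a linear change of the coordinates $X_2,\ldots,X_{n+1}$ alone, which commutes with $g$ and preserves the shape of $F$, we may take $L=W(X_0,X_1,X_j)$ for some $j\geq 2$. Then $F_{0,0}$ vanishes on $L$, so $F_{0,0}=G\,X_j$ with $G\in k[X_2,\ldots,X_{n+1}]_{d-1}$.

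At a point $p\in L$ we have $\partial F/\partial X_0(p)=F_{1,0}(p)=0$ and $\partial F/\partial X_1(p)=F_{0,1}(p)=0$. Every term with $i+j\geq 2$ contributes zero to the partials at $p$, since $X_0=X_1=0$ there. The partials in $X_2,\ldots,X_{n+1}$ reduce to those of $G X_j$, which at $p$ equal $G(p)\,\partial X_j/\partial X_\bullet$. So all partials vanish at every point of $\{G=0\}\cap L$. Since $L\cong\mathbb P^{n-2}$ and $n-2\geq 1$, this intersection is nonempty; if $G$ were identically zero on $L$ it would be all of $L$. Such a point is singular on $\mathcal X$, which contradicts smoothness and gives the claim.

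The main obstacle is the dimension count for the nonemptiness of $\{G=0\}\cap L$, which needs $\dim L\geq 1$ and hence exactly $n\geq 3$. A second point to check is that the coordinate change placing $L$ at $W(X_0,X_1,X_j)$ keeps $g$ diagonal; it does, because $g$ acts as the identity on $X_2,\ldots,X_{n+1}$.
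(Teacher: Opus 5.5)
Your reduction step has a genuine gap, and it cannot be repaired because the statement itself fails at $n=3$. You claim that $\mathbb P(W_1)\cap\mathcal X$ has dimension at most $1<n-2$. For $n=3$, however, $n-2=1$, so the strict inequality is false. The line $\mathbb P(W_1)=W(X_2,\ldots,X_{n+1})$ is fixed pointwise by $g$. If it lies on $\mathcal X$, it is a projective subspace of dimension $n-2$ inside $\mathrm{Fix}(g)$, and your singular-point argument on $W(X_0,X_1)$ never sees it.

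This does happen. Take $d\ge 3$, let $a$ be a primitive $(d-1)$-th root of unity, and let $\mathcal X\subset\mathbb P^4$ be defined by
\[
X_0^{d-1}X_2+X_1^{d-1}X_3+X_2^{d}+X_3^{d}+X_4^{d}=0 .
\]
The partial derivatives $(d-1)X_0^{d-2}X_2$, $(d-1)X_1^{d-2}X_3$, $X_0^{d-1}+dX_2^{d-1}$, $X_1^{d-1}+dX_3^{d-1}$, $dX_4^{d-1}$ have only the trivial common zero, so $\mathcal X$ is smooth. The automorphism $g=[D(a,a,I_3)]$ preserves the equation with $t=1$, consistent with Lemma~\ref{caseii,lem}. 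Every monomial contains one of $X_2,X_3,X_4$, so the $g$-fixed line $\{X_2=X_3=X_4=0\}$ lies on $\mathcal X$. Hence for $n=3$, $\mathrm{Fix}(g)$ can contain a projective subspace of dimension $n-2$.

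Your argument does establish the following:
\begin{enumerate}[$(i)$]
\item the full statement for $n\ge 4$, since there $n-2\ge 2>\dim\mathbb P(W_1)$ forces any such subspace into $W(X_0,X_1)$;
\item for $n=3$, the weaker facts that $\mathrm{Fix}(g)$ contains no plane and that $W(X_0,X_1)\cap\mathcal X$ contains no line.
\end{enumerate}
On that part your route is the paper's. The paper starts directly from $W(X_0,X_1,X_i)\subset\mathrm{Fix}(g)$, which tacitly makes the same reduction and so has the same oversight at $n=3$. It then writes $F_{0,0}=GX_i$ and produces a singular point. You intersect $\{G=0\}$ with $W(X_0,X_1,X_i)$, which is the correct intersection. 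The paper's $\{G=0\}\cap W(X_0,X_1)$ would not yield singular points where $X_i\neq 0$.
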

\begin{proof}
We assume that $n\geq3$.
It suffices to show that $\operatorname{Fix}(g)$ does not contain a projective subspace of dimension $n-2$.
We assume that $W(X_0,X_1,X_i)\subset{\rm Fix}(g)$ for $2\leq i$.
By Lemma \ref{caseii,lem}, $\sum_{u=0,1} F_{u\,_{\underline{0,1}}}(X_0,\ldots,X_{n+1})= F_{0,0}(X_2,\ldots,X_{n+1})$.
Since $W(X_0,X_1,X_i)\subset {\rm Fix}(g)$, 
\[
F_{0,0}(X_2,\ldots,X_{n+1})=G(X_2,\ldots,X_{n+1})X_i
\]
for some $G(X_2,\ldots,X_{n+1})\in k[X_2,\ldots,X_{n+1}]_{d-1}$.
Since $n\geq3$, the intersection
\[
\{G(X_2,\ldots,X_{n+1})=0\}\cap W(X_0,X_1)
\]
is nonempty.
Since $F_{1\,_{\underline{0,1}}}(X_0,\ldots,X_{n+1})=0$,
every point in this intersection is a singular point of $\mathcal X$. 
This contradicts the assumption that $\mathcal X$ is smooth. 
Therefore, if $n\geq3$, then ${\rm Fix}(g)$ does not contain a linear subspace of dimension $n-1$ or $n-2$.
\end{proof}
\begin{thm}\label{caseii,thm}
Let $\mathcal{X}\subset \mathbb{P}^{n+1}$ be a smooth hypersurface of degree $d$ for $n\geq2$.
We assume that $\mathcal{X}$ admits an automorphism $g:=[D(a,a,I_{n})]$ where $a\in k^{\times}\setminus\{1\}$.
Then the order $\mr{ord}(g)$ of $g$ divides one of the integers
\[
d,\quad d-1,\quad \text{or}\quad d-2.
\]
Moreover, the case where $\mr{ord}(g)\geq 3$ and $\mr{ord}(g)$ divides $d-2$ occurs only when $n=2$.
If $\mr{ord}(g)\geq 3$ and $\operatorname{ord}(g)\in\{d,d-1,d-2\}$, then $\mathcal{X} / \langle g \rangle$ is a rational variety.
\end{thm}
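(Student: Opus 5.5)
We follow the pattern of Theorem~\ref{casei,thm} and use Lemma~\ref{caseii,lem}, which gives two possibilities: $t=a$ (which forces $n=2$) or $t=1$. In each case we compare coefficients of suitable monomials in \eqref{casesii,iii}, splitting according to the three configurations of $\mathcal X\cap\{P_0,P_1\}$ fixed above. Since $a=b$, the monomial $X_0^iX_1^j$ is multiplied by $a^{i+j}$. Every comparison therefore has the form $a^{i+j}=t$ for some monomial whose coefficient $F_{i,j}$ is nonzero.

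\emph{Case $t=1$.} If $P_0\notin\mathcal X$, then $F_{d,0}\neq 0$, and comparing the coefficients of $X_0^d$ gives $a^d=1$, so $\mathrm{ord}(g)\mid d$. Now suppose $P_0,P_1\in\mathcal X$. Smoothness at $P_1$ gives a monomial $X_1^{d-1}X_j$ with $j\neq1$. If $j\ge2$, the coefficient comparison gives $a^{d-1}=1$. If $j=0$, the monomial is $X_0X_1^{d-1}$ and gives $a^{d}=1$. The remaining configuration $\mathcal X\cap\{P_0,P_1\}=\{P_1\}$ is handled the same way using $P_0\notin\mathcal X$. In all subcases $\mathrm{ord}(g)$ divides $d$ or $d-1$.

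\emph{Case $t=a$, $n=2$.} Again, if $P_0\notin\mathcal X$ then $a^d=a$, so $a^{d-1}=1$. If $P_0,P_1\in\mathcal X$, smoothness at $P_1$ gives either $X_1^{d-1}X_j$ with $j\ge 2$, which yields $a^{d-1}=a$ and hence $a^{d-2}=1$, or $X_0X_1^{d-1}$, which yields $a^{d-1}=1$. So the order divides $d-1$ or $d-2$. The divisor $d-2$ arises only in this branch, and only with $\mathrm{ord}(g)\geq 3$ being meaningful, since orders $\le 2$ divide $d$ or $d-1$ anyway. This gives the claim that this case occurs only for $n=2$.

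For rationality, we identify $g$ with the generator of a Galois map $f_{P^r,P^{n-r}}$ and invoke Theorem~\ref{back} (equivalently Theorems~\ref{r1} and~\ref{r4}). Here $g=[D(a,a,I_n)]$, so we take $P^1=W(X_2,\ldots,X_{n+1})$, the line spanned by $P_0,P_1$, and $P^{n-1}=W(X_0,X_1)$.

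When $t=1$ and $\mathrm{ord}(g)=d$, neither subspace lies in $\mathcal X$. Indeed $F_{0,0}\neq 0$ by Lemma~\ref{caseii,lem}, and the equality $a^{i+j}=1$ together with $\mathrm{ord}(a)=d$ forces $i+j\in\{0,d\}$. Hence $F=F_{0,0}(X_2,\ldots)+F_{d}(X_0,X_1)$ with $F_d\neq 0$ by smoothness. So $f$ has degree $d$ and $g$ is exactly the matrix in Theorem~\ref{back}.

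When $\mathrm{ord}(g)=d-1$ and $t=1$, the same reasoning forces $i+j\in\{0,d-1\}$. Thus $F=F_{0,0}+\sum_{i+j=d-1}F_{i,j}X_0^iX_1^j$ with the $F_{i,j}$ linear, so $P^1\subset\mathcal X$ and $P^{n-1}\not\subset\mathcal X$, and the degree is $d-1$.

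When $t=a$ and $\mathrm{ord}(g)=d-2$ with $n=2$, only $i+j\in\{1,d-1\}$ survive. Both lines then lie in $\mathcal X$ and the degree is $d-2$, which is the skew lines situation of Theorem~\ref{r4}(a). The remaining case $t=a$, $\mathrm{ord}(g)=d-1$ is handled similarly, with $i+j\in\{1,d\}$.

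The main obstacle is the bookkeeping in this last step. For each subcase we must verify that the surviving monomials give exactly the containment pattern of $P^1$ and $P^{n-1}$ matching the degree $m=\mathrm{ord}(g)$, and we must use $\mathrm{ord}(g)\ge3$ to exclude coincidences such as $d\equiv 1$ modulo the order. Once the pattern is established, Theorem~\ref{back} applies directly.
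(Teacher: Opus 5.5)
Your proposal is correct. For the divisibility part you use the same ingredients as the paper, in the reverse order. The paper first reads off $t$ from the top-degree monomials forced by $\mathcal X\cap\{P_0,P_1\}$, excluding the mixed configurations, and only then invokes Lemma~\ref{caseii,lem}. You fix $t\in\{1,a\}$ by Lemma~\ref{caseii,lem} first, so that a single monomial ($X_0^d$, or $X_jX_1^{d-1}$ from smoothness at $P_1$) suffices.

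The rationality part is where you genuinely depart from the paper. The paper cites ``the assumption on $\operatorname{Fix}(g)$'' (the codimension-one hypothesis of Theorem~\ref{1.2}, which is not part of this statement), Proposition~\ref{caseii,pro}, and the external Theorems~\ref{r1} and~\ref{r4}. You instead use $\operatorname{ord}(g)$ and $t$ to force the exact shape of $F$, e.g.\ $F=F_{0,0}+F_d(X_0,X_1)$ when $\operatorname{ord}(g)=d$. You then read off which of $W(X_2,\ldots,X_{n+1})$ and $W(X_0,X_1)$ lie in $\mathcal X$, and apply Theorem~\ref{back}. This is self-contained, exhibits the Galois map explicitly, and needs no hypothesis on the fixed locus. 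It also reaches cases the cited theorems do not cover as stated:
\begin{itemize}
\item In your $\operatorname{ord}(g)=d$ normal form, $\operatorname{Fix}(g)$ consists of finitely many points on the line together with $\{F_{0,0}=0\}\subset W(X_0,X_1)$. So it has codimension two, and Theorem~\ref{r1}(ii)(b) does not apply.
\item For $\operatorname{ord}(g)=d-1$ with $n\ge3$, the fixed locus again has codimension two.
\item For $\operatorname{ord}(g)=d-1$ with $n=2$, the only fixed curve is a line, so Theorem~\ref{r1}(i)(b) is unavailable, and Theorem~\ref{r4}(b) needs $d\ge5$.
\end{itemize}
So drop the parenthetical ``equivalently Theorems~\ref{r1} and~\ref{r4}''; Theorem~\ref{back} is what does the work. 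What the paper's citation buys is brevity and a direct link to the Galois-point literature.

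A few points should be made explicit.
\begin{enumerate}
\item For the statement about $d-2$, you need $\gcd(d,d-2)\mid 2$ and $\gcd(d-1,d-2)=1$. Then an order $\ge3$ dividing $d-2$ cannot come from the branches giving $d$ or $d-1$, so it must come from $t=a$ and hence $n=2$. Your remark that orders $\le 2$ divide $d$ or $d-1$ is the converse and does not give this.
\item By your first part, $\operatorname{ord}(g)=d$ forces $t=1$, and $\operatorname{ord}(g)=d-2\ge 3$ forces $t=a$; say so before the case split.
\item In the subcase $t=a$, $\operatorname{ord}(g)=d-1$, you get $W(X_2,X_3)\not\subset\mathcal X$, using $F_d(X_0,X_1)\neq 0$ by smoothness, and $W(X_0,X_1)\subset\mathcal X$. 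This is the mirror of the pattern listed before Theorem~\ref{back}. Apply that theorem with the two lines exchanged, writing $g=[D(1,1,a^{-1},a^{-1})]$.
\end{enumerate}
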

\begin{proof}
We first assume that \(\mathcal{X} \cap \{P_0, P_1\} = \emptyset\).
Then $F(X_0,\ldots, X_{n+1})$ contains the monomials $X_0^{d}$ and $X_1^{d}$. 
By the equation $\eqref{casesii,iii}$, we have 
\[
t=a^{d}.
\]
By Lemma~\ref{caseii,lem},
\begin{equation*}
a^{d}=
\left\{
\begin{aligned}
a\ \ \mr{and}\ \ n=2&\quad(\mr{case}~(i)\ \mr{of\ Lemma}~\ref{caseii,lem}),\\
1&\quad(\mr{case}~(ii)\ \mr{of\ Lemma}~\ref{caseii,lem}).
\end{aligned}
\right.
\end{equation*}
Consequently, $a^d=a$ and $n=2$ or $a^d=1$.
Thus, 
$\mr{ord}(g)$ divides $d-1$ and $n=2$, or $\mr{ord}(g)$ divides $d$.
\\

Next, we assume that $\mathcal{X} \cap \{P_0, P_1\} = \{P_1\}$.  
Then $F(X_0,\ldots,X_{n+1})$ contains the monomials $X_0^{d}$ and 
$X_i X_1^{d-1}$ for some $i\neq 1$.
By equation~\eqref{casesii,iii}, we obtain
\[
t=a^{d}.
\]
If $i\neq 0$, then the equation~\eqref{casesii,iii} implies $t=a^{d-1}$.
By $t=a^d$, we have $a=1$. This contradicts that $a\neq 1$. 
Thus $i=0$.
Since $t=a^d$, the argument reduces to the case where $\mathcal X \cap \{P_0, P_1\} = \emptyset$.
Hence, $\mr{ord}(g)$ divides $d-1$ and $n=2$, or $\mr{ord}(g)$ divides $d$.
\\

Finally, we assume that $\mathcal{X} \cap \{P_0, P_1\} = \{P_0,P_1\}$. 
Then $F(X_0,\ldots,X_{n+1})$ contains
the monomials $X_i X_0^{d-1}$ and $X_j X_1^{d-1}$ for $i\neq 0$ and $j\neq 1$.
If $i=1$ and $j\neq 0$, then the equation~\eqref{casesii,iii} gives 
\[
a^{d}=a^{d-1},
\]
which contradicts the assumption $a\neq 1$. 
Thus, the case $i=1$ and $j\neq 0$ cannot occur.  
Similarly, the case $i\neq 1$ and $j=0$ is impossible.  
Hence, there are two possibilities:
\[
(i,j)=(1,0)\qquad\text{or}\qquad i\neq 1\ \text{and}\ j\neq 0.
\]

We assume that $(i,j)=(1,0)$. By the equation~\eqref{casesii,iii}, 
\[
t=a^{d}.
\]
The remaining argument is reduced to the case where $\mathcal X \cap \{P_0, P_1\} = \emptyset$.
Hence, $\mr{ord}(g)$ divides $d-1$ and $n=2$, or $\mr{ord}(g)$ divides $d$.

We assume that $i\neq 1$ and $j\neq 0$.
By the equation~\eqref{casesii,iii}, we have
\[
t=a^{d-1}.
\]
By Lemma~\ref{caseii,lem},
\begin{equation*}
a^{d-1}=
\left\{
\begin{aligned}
a\ \ \mr{and}\ \ n=2&\quad(\mr{case}~(i)\ \mr{of\ Lemma}~\ref{caseii,lem}),\\
1&\quad(\mr{case}~(ii)\ \mr{of\ Lemma}~\ref{caseii,lem}).
\end{aligned}
\right.
\end{equation*}
Hence, $\mr{ord}(g)$ divides $d-2$ and $n=2$ or $\mr{ord}(g)$ divides $d-1$. 

In particular, since $\gcd(d-2,d)=1$ or $2$, it follows that if $\operatorname{ord}(g)\ge 3$ and $\operatorname{ord}(g)$ divides $d-2$, then necessarily $n=2$.

We assume that $\operatorname{ord}(g)=d$ or $d-1$. By the assumption on $\operatorname{Fix}(g)$ and Proposition~\ref{caseii,pro}, Theorem~\ref{r1} implies that $\mathcal{X}/\langle g\rangle$ is a rational variety.
If $\operatorname{ord}(g)=d-2$ and $\mr{ord}(g)\geq 3$, then by the assumption on $\operatorname{Fix}(g)$ and Theorem~\ref{r4}, $\mathcal{X}/\langle g\rangle$ is a rational surface.
\end{proof}

\subsection{Type III}
We now consider Type III and assume that $a\neq b$.
\begin{lem}\label{caseiii,1}
Let $\mathcal{X}\subset \mathbb{P}^{n+1}$ be a smooth hypersurface of degree $d$ where $n\geq2$.
We assume that $\mathcal{X}$ admits an automorphism $g:=[D(a,b,I_{n})]$ where $a,b\in k^{\times}\backslash\{1\}$ with $a\ne b$. 
Then $t = 1$ and 
\[\sum_{u=0,1} F_{u\,_{\underline{0,1}}}(X_0,\ldots,X_{n+1})= F_{0,0}(X_2,\ldots,X_{n+1}).\]
\end{lem}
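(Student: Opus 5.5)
Restricting equation~\eqref{casesii,iii} to the terms with $i+j\le 1$ gives
\[
F_{0,0}+aF_{1,0}X_0+bF_{0,1}X_1=t\bigl(F_{0,0}+F_{1,0}X_0+F_{0,1}X_1\bigr),
\]
so $F_{0,0}=0$ unless $t=1$, $F_{1,0}=0$ unless $t=a$, and $F_{0,1}=0$ unless $t=b$. The values $1,a,b$ are pairwise distinct, since $a,b\neq 1$ and $a\neq b$. Hence at most one of $F_{0,0}$, $F_{1,0}X_0$, $F_{0,1}X_1$ is nonzero. By Lemma~\ref{casesii,iii,lem1}, exactly one of them is nonzero. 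It remains to rule out $t=a$ and $t=b$.

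Suppose $t=a$. Then $F_{0,0}=0$ and $F_{0,1}=0$, so on $W(X_0,X_1)$ we have $F=0$ and
\[
\frac{\partial F}{\partial X_k}=0\quad (k\ge 2),\qquad \frac{\partial F}{\partial X_1}=F_{0,1}=0,\qquad \frac{\partial F}{\partial X_0}=F_{1,0}(X_2,\ldots,X_{n+1}).
\]
Here $F_{1,0}$ is a form of degree $d-1\ge 1$ in $n\ge 2$ variables. Its zero locus in $W(X_0,X_1)\cong\mathbb P^{n-1}$ is therefore nonempty, of dimension $n-2\ge 0$. Every point of this zero locus is a singular point of $\mathcal X$, which contradicts smoothness. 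This uses $d\ge 2$, which holds in the setting of the paper ($d\ge 3$). The case $t=b$ is identical after swapping $X_0$ and $X_1$.

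Therefore $t=1$, $F_{1,0}=F_{0,1}=0$, and the stated identity holds. The only point needing care is the existence of a common zero, and it is immediate: unlike Lemma~\ref{caseii,lem}, only one form has to vanish, so a point exists already for $n=2$. This is why no exceptional case $n=2$ appears here.
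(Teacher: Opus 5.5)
Your proof is correct and follows the argument the paper intends: the paper omits the details and defers to the proof of Lemma~\ref{caseii,lem}. That argument compares coefficients to get $t\in\{1,a,b\}$ with exactly one surviving low-order term. It then rules out $t=a$ (resp.\ $t=b$) by finding singular points on $W(X_0,X_1)\subset\mathcal X$ at the zeros of $F_{1,0}$ (resp.\ $F_{0,1}$). Your observation that a single form of positive degree always has a zero on $\mathbb P^{n-1}$ for $n\ge 2$ correctly explains why no exceptional $n=2$ case arises here, unlike in Lemma~\ref{caseii,lem}.
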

\begin{proof}
The argument is similar to that in the proof of Lemma~\ref{caseii,lem}.
We omit the details.
\end{proof}
By Lemma \ref{caseiii,1}, we see that $\mr{Fix}(g)$ has codimesion two in $\mac X$.  
\begin{pro}\label{caseiii,pro}
Let $\mathcal X \subset \mathbb P^{n+1}$ be a smooth hypersurface of degree $d$ for $n\geq3$.
We assume that $\mathcal{X}$ admits an automorphism $g:=[D(a,b,I_{n})]$ where $a,b\in k^{\times}\backslash\{1\}$ with $a\neq b$.
Then ${\rm Fix}(g)$ does not contain a projective subspace of dimension $n-2$ or $n-1$.
\end{pro}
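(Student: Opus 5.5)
The plan is to follow the proof of Proposition~\ref{caseii,pro} closely, with Lemma~\ref{caseiii,1} taking the place of Lemma~\ref{caseii,lem}.

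First we locate the fixed locus. The eigenspaces of $D(a,b,I_n)$ are the coordinate points $P_0$, $P_1$ and the linear space $W(X_0,X_1)\cong\mathbb P^{n-1}$. Hence
\[
\operatorname{Fix}(g)=(\{P_0,P_1\}\cap\mathcal X)\cup(\mathcal X\cap W(X_0,X_1)).
\]
By Lemma~\ref{caseiii,1} we have $t=1$ and $\sum_{u=0,1}F_{u\,_{\underline{0,1}}}=F_{0,0}(X_2,\ldots,X_{n+1})$, so $F_{1,0}=F_{0,1}=0$. By Lemma~\ref{casesii,iii,lem1} this forces $F_{0,0}\neq 0$. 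Therefore $W(X_0,X_1)\not\subset\mathcal X$, and $\mathcal X\cap W(X_0,X_1)$ is the hypersurface $\{F_{0,0}=0\}$ in $W(X_0,X_1)\cong\mathbb P^{n-1}$.

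A projective subspace of dimension $n-2\geq1$ contained in $\operatorname{Fix}(g)$ cannot lie in the finite set $\{P_0,P_1\}$. By irreducibility it must therefore lie in $\mathcal X\cap W(X_0,X_1)$. A subspace of dimension $n-1$ would have to equal $W(X_0,X_1)$, which is excluded. So it suffices to rule out dimension $n-2$. After a linear change of the coordinates $X_2,\ldots,X_{n+1}$, which commutes with $g$, we may assume the subspace is $W(X_0,X_1,X_i)$ for some $i\geq2$. Then $X_i$ divides $F_{0,0}$, say $F_{0,0}=G\,X_i$ with $G\in k[X_2,\ldots,X_{n+1}]_{d-1}$.

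Next we derive a contradiction from the singular points. Since $n\geq3$, the locus $\{G=0\}\cap W(X_0,X_1,X_i)$ is a hypersurface in $\mathbb P^{n-2}$ with $n-2\geq1$, so it is nonempty. Take a point $p$ in it. All partials of $F$ with respect to $X_0$ and $X_1$ at $p$ come from the terms $F_{1,0}X_0+F_{0,1}X_1$, and these vanish. The partials with respect to $X_2,\ldots,X_{n+1}$ at $p$ equal the partials of $F_{0,0}=GX_i$, and these vanish because $G(p)=0$ and $X_i(p)=0$. Thus $p$ is a singular point of $\mathcal X$, contradicting smoothness.

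The main point to verify is the reduction of an arbitrary $(n-2)$-plane in $W(X_0,X_1)$ to a coordinate one. This works because $g$ acts as the identity on $X_2,\ldots,X_{n+1}$, so any linear change of those variables preserves the diagonal form of $g$. With that in place, the remaining steps carry over from the Type~II argument.
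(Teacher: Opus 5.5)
Your proof is correct and is the argument the paper intends: the paper omits this proof, saying it follows Proposition~\ref{caseii,pro}, with Lemma~\ref{caseiii,1} supplying $F_{1,0}=F_{0,1}=0$. You also make explicit two points the paper's Type~II argument leaves implicit: the reduction of an arbitrary $(n-2)$-plane in $W(X_0,X_1)$ to a coordinate one, and the need to take the singular point in $\{G=0\}\cap W(X_0,X_1,X_i)$ rather than merely in $\{G=0\}\cap W(X_0,X_1)$.
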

\begin{proof}
The argument follows the same lines as the proof of Proposition~\ref{caseii,pro}. We omit the details.
\end{proof}
\begin{pro}\label{caseiii,pro2}
Let $\mathcal X \subset \mathbb P^{3}$ be a smooth hypersurface of degree $d$.
We assume that $\mathcal{X}$ admits an automorphism $g:=[D(a,b,I_2)]$ where $a,b\in k^{\times}\backslash\{1\}$ with $a\neq b$.
Then $|{\rm Fix}(g)|\geq d$.
\end{pro}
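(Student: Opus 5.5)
The plan is to count only the fixed points of $g$ lying on the eigenline for the eigenvalue $1$, and to show there are already $d$ of them. Since $g=[D(a,b,I_2)]$ with $a,b,1$ pairwise distinct, its eigenspaces are the two coordinate points $P_0$, $P_1$ and the line $L:=W(X_0,X_1)$. Hence
\[
{\rm Fix}(g)=\bigl(\{P_0,P_1\}\cup L\bigr)\cap\mathcal X,
\]
and it suffices to show that $L\cap\mathcal X$ consists of $d$ distinct points. In particular, if $L\not\subset\mathcal X$, the set ${\rm Fix}(g)$ is finite, so counting its elements makes sense.

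The first step is to invoke Lemma~\ref{caseiii,1}. It gives $t=1$ and
\[
\sum_{u=0,1}F_{u\,_{\underline{0,1}}}=F_{0,0}(X_2,X_3),
\]
so $F_{1,0}=F_{0,1}=0$. By Lemma~\ref{casesii,iii,lem1} this sum is nonzero, hence $F_{0,0}\neq 0$. Restricting $F$ to $L$ gives exactly $F_{0,0}(X_2,X_3)$, which is a nonzero binary form of degree $d$. Therefore $L\not\subset\mathcal X$, and $L\cap\mathcal X$ is the zero set of $F_{0,0}$ on $L\cong\mathbb P^1$, consisting of at most $d$ points.

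The main step is to show that $F_{0,0}$ has no repeated root. Suppose $p\in L$ is a multiple root of $F_{0,0}$, and compute all partial derivatives of $F$ at $p$.
\begin{itemize}
\item For $\partial F/\partial X_0$ and $\partial F/\partial X_1$: at a point with $X_0=X_1=0$, only the terms of $F$ of degree exactly one in $(X_0,X_1)$ contribute. These are $F_{1,0}X_0+F_{0,1}X_1$, which vanish by Lemma~\ref{caseiii,1}. So both derivatives are $0$ at $p$.
\item For $\partial F/\partial X_2$ and $\partial F/\partial X_3$: at $p$ these coincide with $\partial F_{0,0}/\partial X_2$ and $\partial F_{0,0}/\partial X_3$, since every other term of $F$ contains $X_0$ or $X_1$ and so vanishes identically on $L$. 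Both vanish at $p$ because $p$ is a multiple root of the binary form $F_{0,0}$.
\end{itemize}
Thus $p$ is a singular point of $\mathcal X$, contradicting smoothness.

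Hence $F_{0,0}$ has $d$ distinct roots, so $|L\cap\mathcal X|=d$ and $|{\rm Fix}(g)|\geq d$. The only real content is the vanishing of $F_{1,0}$ and $F_{0,1}$, which is supplied by Lemma~\ref{caseiii,1}. This is the step I would check most carefully, by rechecking the coefficient comparison in~\eqref{casesii,iii} for the monomials of total degree at most one in $X_0,X_1$.
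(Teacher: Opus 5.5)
Your proposal is correct and follows the paper's own proof: Lemma~\ref{caseiii,1} removes the linear terms $F_{1,0}X_0+F_{0,1}X_1$, so a repeated linear factor of $F_{0,0}(X_2,X_3)$ would give a singular point of $\mathcal X$ on $W(X_0,X_1)$, which forces $d$ distinct fixed points on that line. You additionally write out the partial-derivative check and the finiteness of ${\rm Fix}(g)$, both of which the paper leaves implicit.
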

\begin{proof}
We write $F_{0,0}(X_2,X_3)= u \prod_{i=1}^d (a_i X_2 - b_i X_3)$ where $u,a_i,b_i \in k$.
We assume that $[0:0:b_i:a_i]=[0:0:b_j:a_j]$ for some $i\neq j$.
By Lemma \ref{caseiii,1}, $\sum_{u=0,1} F_{u_{\underline{0,1}}}(X_0,X_1,X_2,X_3)= F_{0,0}(X_2,X_3)$.
Hence the point $[0:0:b_i:a_i]$ is a singular point of $\mathcal{X}$.
This contradicts the smoothness of $\mathcal{X}$.
Therefore, we conclude that
$[0:0:b_i:a_i]\neq[0:0:b_j:a_j]$ for $i\neq j$, and consequently,
$|\mathrm{Fix}(g)| \geq d$.
\end{proof}
\begin{thm}\label{caseiii,thm}
Let $\mathcal{X}\subset \mathbb{P}^{n+1}$ be a smooth hypersurface of degree $d$ for $n\geq2$.
We assume that $\mathcal{X}$ admits an automorphism $g:=[D(a,b,I_{n})]$ where $a,b\in k^{\times}\setminus\{1\}$ with $a\neq b$.
Then the order of $g$ divides one of the integers
\[
(d-1)d,\quad(d-1)^2,\quad \text{or}\quad (d-2)d.
\]
If $\operatorname{ord}(g) = kd$ or $k(d-1)$ for $k\geq2$, then $\mathcal{X} / \langle g \rangle$ is a rational variety.
\end{thm}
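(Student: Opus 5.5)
The order bound follows from Lemma~\ref{caseiii,1} and the monomials forced by smoothness at $P_0,P_1$. The rationality statement reduces, through a suitable power of $g$, to the Galois-point situation of Theorem~\ref{back} and Theorem~\ref{r1}.

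\emph{Step 1 (order bound).} By Lemma~\ref{caseiii,1} we have $t=1$. Since $\operatorname{ord}(g)=\operatorname{lcm}(\operatorname{ord}(a),\operatorname{ord}(b))$, it suffices to bound $a$ and $b$. We split into the three configurations of $\mathcal X\cap\{P_0,P_1\}$ listed above and read off relations from monomials of $F$ via \eqref{casesii,iii} with $t=1$.
\begin{enumerate}[$(1)$]
\item If $P_0,P_1\notin\mathcal X$, then $X_0^d$ and $X_1^d$ occur in $F$. Hence $a^d=b^d=1$ and $\operatorname{ord}(g)\mid d$.
\item If $\mathcal X\cap\{P_0,P_1\}=\{P_1\}$, then $X_0^d$ and $X_iX_1^{d-1}$ occur in $F$ for some $i\neq1$, so $a^d=1$.
\begin{itemize}
\item If $i\ge2$, then $b^{d-1}=1$, and $\operatorname{ord}(g)\mid d(d-1)$.
\item If $i=0$, then $ab^{d-1}=1$, so $b^{d(d-1)}=1$, and again $\operatorname{ord}(g)\mid d(d-1)$.
\end{itemize}
\item If $P_0,P_1\in\mathcal X$, then $X_iX_0^{d-1}$ and $X_jX_1^{d-1}$ occur in $F$ for some $i\neq 0$, $j\neq 1$.
\begin{itemize}
\item If $i,j\ge2$, then $a^{d-1}=b^{d-1}=1$, so $\operatorname{ord}(g)\mid d-1$.
\item If $i=1$ and $j\ge2$, then $b^{d-1}=1$ and $a^{d-1}b=1$, so $a^{(d-1)^2}=1$ and $\operatorname{ord}(g)\mid (d-1)^2$. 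The case $i\ge 2$, $j=0$ is symmetric.
\item If $(i,j)=(1,0)$, then $b=a^{-(d-1)}$ and $ab^{d-1}=1$. This gives $a^{(d-1)^2-1}=a^{d(d-2)}=1$, hence $b^{d(d-2)}=1$ as well, and $\operatorname{ord}(g)\mid d(d-2)$.
\end{itemize}
\end{enumerate}
This yields the three integers $(d-1)d$, $(d-1)^2$, $(d-2)d$ in the statement.

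\emph{Step 2 (rationality).} Assume $\operatorname{ord}(g)=kd$ or $k(d-1)$ with $k\ge2$. The plan is to find, in each subcase of Step 1, a power $h=g^{s}$ of Type~I, say $h=[D(1,c,I_n)]$ (or with the roles of $X_0,X_1$ exchanged). We want $\operatorname{ord}(h)=d$ with $P_1\notin\mathcal X$, or $\operatorname{ord}(h)=d-1$ with $P_1\in\mathcal X$.

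For example, in case $(2)$ with $i=0$, the power $g^{d}=[D(1,b^{d},I_n)]$ kills $a$. It fixes the hyperplane section $\{X_1=0\}$ pointwise, and $P_1\in\mathcal X$. By Theorem~\ref{casei,thm} (with Proposition~\ref{casei,pro} and Theorem~\ref{r1}), together with Theorem~\ref{back}, $\mathcal X/\langle h\rangle$ is then birational to $\mathbb P^n$ via the projection $\pi_{P_1}$ from $P_1$.

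Since $g$ is diagonal, it commutes with $h$ and descends through $\pi_{P_1}$ to the diagonal automorphism $[D(a,I_n)]$ on $\mathbb P^n$ (coordinates $X_0,X_2,\dots,X_{n+1}$). Therefore
\[
\mathcal X/\langle g\rangle\ \sim_{\mathrm{bir}}\ \mathbb P^n/\langle [D(a,I_n)]\rangle .
\]
The right-hand side is a quotient of $\mathbb P^n$ by a diagonal cyclic group, hence a toric variety, hence rational. The same pattern should apply in the other subcases, using $g^{d-1}$, $g^{d}$ or $g^{d-2}$ according to which monomials are present.

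\emph{Main obstacle.} The hardest step is guaranteeing that the chosen power $h$ has \emph{full} order $d$ or $d-1$. Only then is $\pi_P$ Galois with group exactly $\langle h\rangle$; otherwise $\mathcal X/\langle h\rangle$ is only an intermediate cover. This is where the hypothesis $k\ge 2$ must be used.

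The first thing I would try is to exploit the coupling relations such as $ab^{d-1}=1$, or $b=a^{-(d-1)}$. Combined with $\operatorname{ord}(g)=kd$ or $k(d-1)$ and the coprimality $\gcd(d,d-1)=1$, these should force $\operatorname{ord}(b^{d})=d-1$ (respectively $\operatorname{ord}(a^{d-1})=d$).

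The subcases where $a$ and $b$ are uncoupled (e.g.\ case $(2)$ with $i\ge2$) are the delicate ones. There $\operatorname{ord}(a)\mid d$ and $\operatorname{ord}(b)\mid d-1$ are independent, so the factorization $kd=\operatorname{ord}(a)\operatorname{ord}(b)$ needs a separate argument to show that one factor is full. If that fails, I would instead use Lemma~\ref{caseiii,1} and smoothness to rule out such orders directly.
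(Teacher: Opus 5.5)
Your Step~1 is correct and matches the paper's case analysis. Your descent argument (passing from $\langle h\rangle$ to $\langle g\rangle$ via a diagonal, hence toric, quotient) is also sound, and more careful than the paper's ``hence''.

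\textbf{The gap.} The gap is in Step~2. The plan of always finding a power of $g$ of Type~I, centred at a Galois \emph{point}, fails in the coupled configurations when $\operatorname{ord}(g)=kd$. Two subcases show this.

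\emph{The configuration $\{P_0,P_1\}$ with $(i,j)=(1,0)$.} Here $b=a^{1-d}$, so $\operatorname{ord}(a)=\operatorname{ord}(g)=kd$ with $k\mid d-2$, and hence $\gcd(kd,d-1)=1$. Then $a^s=1$ and $b^s=a^{s(1-d)}=1$ each force $kd\mid s$. So \emph{no} nontrivial power of $g$ is of Type~I. This case really occurs: for $d\ge4$, take $F=X_0^{d-1}X_1+X_0X_1^{d-1}+G(X_2,\dots,X_{n+1})$ with $G$ smooth, $a$ a primitive $d(d-2)$-th root of unity, and $b=a^{1-d}$.

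\emph{The configuration $\{P_1\}$ with $i=0$ and $\operatorname{ord}(g)=kd$.} The only Type~I powers are powers of $g^d=[D(1,b^d,I_n)]$, which has order $k$. This is not $d-1$ unless $k=d-1$. An example is $d=5$, $F=X_0^5+X_0X_1^4+X_0^3X_1^2+G$, with $b$ a primitive $10$-th root of unity and $a=b^{-4}$.

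So your worked example with $g^d$ only covers the branch $\operatorname{ord}(g)=k(d-1)$. Your fallback of excluding such orders by smoothness cannot succeed, since they occur.

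\textbf{The missing ingredient.} What you need is the Galois \emph{line}, i.e.\ Theorem~\ref{back} with $r=1$, which is what the paper uses. In both subcases above, $g^k=[D(c,c,I_n)]$ with $c$ a primitive $d$-th root of unity.
\begin{itemize}
\item The line $W(X_2,\dots,X_{n+1})$ is not contained in $\mathcal X$, since $X_0^d$ (respectively $X_0^{d-1}X_1$) occurs in $F$.
\item $W(X_0,X_1)\not\subset\mathcal X$, since $F_{0,0}\neq0$ by Lemmas~\ref{casesii,iii,lem1} and~\ref{caseiii,1}.
\end{itemize}
Hence $f_{P^1,P^{n-1}}\colon\mathcal X\dashrightarrow\mathbb P^1\times\mathbb P^{n-1}$ has degree $d$ and is Galois with group $\langle g^k\rangle$. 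Your descent argument then applies verbatim to the induced diagonal action on $\mathbb P^1\times\mathbb P^{n-1}$.

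\textbf{The uncoupled case.} The case you call delicate is immediate. There $\operatorname{ord}(a)\mid d$ and $\operatorname{ord}(b)\mid d-1$ are coprime, so $\operatorname{ord}(g)=\operatorname{ord}(a)\operatorname{ord}(b)$. If $\operatorname{ord}(g)=k(d-1)$, this forces $\operatorname{ord}(b)=d-1$ and $\operatorname{ord}(a)=k$ with $\gcd(k,d-1)=1$. Hence $g^k=[D(1,b^k,I_n)]$ has order $d-1$. The case $kd$ is symmetric: $g^k=[D(a^k,I_{n+1})]$ has order $d$ and $P_0\notin\mathcal X$.
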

\begin{proof}
We first assume that \(\mathcal{X} \cap \{P_0, P_1\} = \emptyset\).
Then $F(X_0,\ldots, X_{n+1})$ contains the monomials $X_0^{d}$ and $X_1^{d}$. 
By the equation $\eqref{casesii,iii}$ and Lemma~\ref{caseiii,1}, we have 
\[
t=a^{d}=b^{d}=1.
\]
Then $\mr{ord}(g)$ divides $d$.
\\

Next, we assume that \(\mathcal{X} \cap \{P_0, P_1\} = \{P_1\}\).
Then $F(X_0,\ldots, X_{n+1})$ contains the monomials $X_0^{d}$, and $X_iX_1^{d-1}$ for $i\neq1$. 

We assume that $i=0$.
By the equation $\eqref{casesii,iii}$ and Lemma~\ref{caseiii,1}, 
\[
t=a^{d}=ab^{d-1}=1.
\]
By substituting \(a=b^{1-d}\) into \(a^d=1\), we get $b^{(d-1)d}=1$.
Since
\[
a=b^{1-d}\quad \mr{and}\quad b^{(d-1)d}=1,
\]
$\mr{ord}(g)$ divides $(d-1)d$.
In addition, if $\operatorname{ord}(g) =k(d-1)$ for $k\geq2$, then $g^k=[D(1,b^k,I_n)]$.
Theorem~\ref{r1} implies that $\mathcal{X} / \langle g^k \rangle$ is rational, and hence $X/\langle g\rangle$ is rational.
We assume that $\operatorname{ord}(g) = kd$ for $k\geq2$.
Then $g^k=[D(b^k,b^k,I_n)]$.
Since $P_0\not\in\mac X$, $W(X_2,\ldots,X_{n+1})\not\subset \mac X$.
By Proposition~\ref{caseiii,pro}, $W(X_0,X_1)\not\subset \mac X$.
By Theorem \ref{back}, $\mathcal{X} / \langle g^k \rangle$ is rational, and hence $X/\langle g\rangle$ is rational.

We assume that $i\neq0$.
By the equation $\eqref{casesii,iii}$ and Lemma~\ref{caseiii,1}, 
\[
t=a^{d}=b^{d-1}=1.
\]
Then $\mr{ord}(g)$ divides $(d-1)d$.
In addition, if $\operatorname{ord}(g) =k(d-1)$ (resp. $kd$) for $k\geq2$, then  $g^k=[D(1,b^k,I_n)]$ (resp. $g^k=[D(a^k,I_{n+1})]$).
Theorem~\ref{r1} implies that $\mathcal{X} / \langle g^k \rangle$ is rational, and hence $X/\langle g\rangle$ is rational.
\\

Finally, we assume that \(\mathcal{X} \cap \{P_0, P_1\} = \{P_0,P_1\}\).
Then $F(X_0,\ldots, X_{n+1})$ contains the monomials $X_iX_0^{d-1}$ and $X_jX_1^{d-1}$ for $i\neq0$ and $j\neq1$. 

We assume that $(i,j)=(1,0)$.
By the equation $\eqref{casesii,iii}$ and Lemma~\ref{caseiii,1},
\[
t=ba^{d-1}=ab^{d-1}=1.
\]
By substituting \(a=b^{1-d}\) into \(ba^{d-1}=1\), we get $b^{(d-2)d}=1$.
Since
\[
a=b^{1-d}\quad \mr{and}\quad b^{(2-d)d}=1,
\]
$\mr{ord}(g)$ divides $(d-2)d$.
In addition, if $\operatorname{ord}(g) = kd$ for $k\geq2$.
Then $g^k=[D(b^k,b^k,I_n)]$.
Since $P_0\not\in\mac X$, we see that $W(X_2,\ldots,X_{n+1})\not\subset \mac X$.
By Proposition~\ref{caseiii,pro}, $W(X_0,X_1)\not\subset \mac X$.
By Theorem \ref{back}, $\mathcal{X} / \langle g^k \rangle$ is rational, and hence $\mathcal{X} / \langle g \rangle$ is rational.

We assume that $i=1$ and $j\neq0$.
By the equation $\eqref{casesii,iii}$ and Lemma~\ref{caseiii,1},
\[
t=ba^{d-1}=b^{d-1}=1.
\]
By substituting \(b=a^{1-d}\) into \(b^{d-1}=1\), we get $a^{(d-1)^2}=1$.
Since
\[
a^{(d-1)^2}=1\quad \mr{and}\quad b=a^{1-d},
\]
$\mr{ord}(g)$ divides $(d-1)^2$.
In addition, if $\operatorname{ord}(g) =k(d-1)$ for $k\geq2$, then  $g^k=[D(a^k,I_{n+1})]$.
Theorem~\ref{r1} implies that $\mathcal{X} / \langle g^k \rangle$ is rational, and hence $X/\langle g\rangle$ is rational.

If $i \neq 1$ and $j = 0$, then by interchanging $X_0$ and $X_1$ the situation is reduced to the case $i = 1$ and $j \neq 0$. In particular, $\mr{ord}(g)$ divides $(d-1)^2$.
In particular, if $\operatorname{ord}(g) =k(d-1)$ for $k\geq2$, then $X/\langle g\rangle$ is rational.

We assume that $i\neq1$ and $j\neq 0$.
By the equation $\eqref{casesii,iii}$ and Lemma~\ref{caseiii,1},
\[
t=a^{d-1}=b^{d-1}=1.
\]
Then $\mr{ord}(g)$ divides $d-1$.
\end{proof}

\subsection{Types IV, V, and VI}
Next, we turn to Types IV, V, and VI.
In these cases, the fixed locus ${\rm Fix}(g)$ has codimension $2$ in $\mathcal X$.
We write the defining equation of $\mathcal{X}$ as
\[
F(X_0,\ldots,X_{n+1})
= \sum_{i,j,k=0}^{d} F_{i,j,k}(X_3,\ldots,X_{n+1})X_0^{i}X_1^jX_2^k
\]
where $F_{i,j,k}(X_3,\ldots,X_{n+1})\in k[X_3,\ldots,X_{n+1}]_{d-i-j-k}$ for $i,j,k=0,\ldots,d$.
We assume that $\mathcal{X}$ admits an automorphism $[D(a,b,c,I_{n-1})]$ where $a,b,c\in k^{\times}\backslash\{1\}$. 
Then $D(a,b,c,I_{n-1})^{*} F(X_0,\ldots,X_{n+1})
= tF(X_0,\ldots,X_{n+1})$ where some $t \in k^{\times}$, and hence
\begin{equation}\label{casesiv,v,vi,u}
\sum_{i,j,k=0}^{d} a^ib^jc^kF_{i,j,k}(X_3,\ldots,X_{n+1})X_0^iX_1^jX_2^k
= t\sum_{i,j,k=0}^{d} F_{i,j,k}(X_3,\ldots,X_{n+1})X_0^iX_1^jX_2^k.
\end{equation}
For $u=0,\ldots, d$,
we define 
\begin{equation*}
\begin{split}
F_{u\,_{\underline{0,1,2}}}(X_0,\ldots,X_{n+1}):=&\sum_{i+j+k=u}F_{i,j,k}(X_3,\ldots,X_{n+1})X_0^iX_1^jX_2^k.
\end{split}
\end{equation*}
\begin{lem}\label{casesiv,v,vi,lem1}
We have that $\sum_{u=0,1} F_{u\,_{\underline{0,1,2}}}(X_0,\ldots,X_{n+1})\neq0$.
\end{lem}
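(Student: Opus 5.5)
The plan is to argue by contradiction, exactly as in Lemma~\ref{casesii,iii,lem1}. Suppose that
\[
\sum_{u=0,1} F_{u\,_{\underline{0,1,2}}}(X_0,\ldots,X_{n+1})=0.
\]
Then every coefficient $F_{i,j,k}(X_3,\ldots,X_{n+1})$ with $i+j+k\le 1$ vanishes. In other words, each monomial appearing in $F$ has total degree at least $2$ in the variables $X_0,X_1,X_2$.

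First, I will show that $W(X_0,X_1,X_2)\subset\mathcal X$. Restricting $F$ to $X_0=X_1=X_2=0$ leaves only $F_{0,0,0}(X_3,\ldots,X_{n+1})$, which is zero by assumption. So $F$ vanishes identically on $W(X_0,X_1,X_2)$.

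Second, I will show that every point of $W(X_0,X_1,X_2)$ is a singular point of $\mathcal X$. Each monomial of $F$ is divisible by a product of two of $X_0,X_1,X_2$, or by the square of one of them. Hence all first partial derivatives $\partial F/\partial X_l$ for $0\le l\le n+1$ are still divisible by at least one of $X_0,X_1,X_2$. Therefore they all vanish on $W(X_0,X_1,X_2)$, and $\mathcal X$ is singular there.

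Third, I need $W(X_0,X_1,X_2)\neq\emptyset$. This linear subspace has dimension $n-2\ge 0$ because $n\ge2$, so it contains at least one point. That point is singular on $\mathcal X$, contradicting smoothness, and the lemma follows.

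There is no real obstacle. The only point needing care is the nonemptiness of $W(X_0,X_1,X_2)$, which is guaranteed precisely by the standing hypothesis $n\geq 2$. In the boundary case $n=2$, the subspace $W(X_0,X_1,X_2)$ is the single coordinate point $P_3$, and the argument still applies to it.
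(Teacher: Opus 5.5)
Your proof is correct and takes the same approach as the paper, which omits the details and refers to Lemma~\ref{casesii,iii,lem1}: if the sum vanished, then $W(X_0,X_1,X_2)\subset\mathcal X$ and every point of it would be singular. You also correctly fill in the details the paper leaves implicit, namely that all first partials vanish on $W(X_0,X_1,X_2)$ and that this subspace is nonempty because $n\ge 2$.
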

\begin{proof}
The argument is similar to that in the proof of Lemma~\ref{casesii,iii,lem1}.
We omit the details.
\end{proof}
\subsection{Type IV}
We now treat Type IV and assume that $a=b=c$.
\begin{lem}\label{caseiv,lem1}
Let $\mathcal{X}\subset \mathbb{P}^{n+1}$ be a smooth hypersurface of degree $d$ for $n\geq2$.
We assume that $W(X_0,X_1,X_2)\subset \mathcal X$ and $\mathcal{X}$ admits an automorphism $[D(a,a,a,I_{n-1})]$ where $a\in k^{\times}\backslash\{1\}$.
Then $n\leq 4$, $t = a$, and
\[
\begin{split}
\sum_{u=0,1} F_{u\,_{\underline{0,1,2}}}(X_0,\ldots,X_{n+1})=&F_{1,0,0}(X_3,\ldots,X_{n+1})X_0+F_{0,1,0}(X_3,\ldots,X_{n+1})X_1\\
&+F_{0,0,1}(X_3,\ldots,X_{n+1})X_2.
\end{split}
\]
\end{lem}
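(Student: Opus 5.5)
The plan is to imitate the proofs of Lemma~\ref{caseii,lem} and Lemma~\ref{caseiv,lem1}'s Type~II analogue. We restrict the relation \eqref{casesiv,v,vi,u} to the part of total degree at most one in $X_0,X_1,X_2$. Since the diagonal entries on $X_0,X_1,X_2$ all equal $a$, a monomial $F_{i,j,k}X_0^iX_1^jX_2^k$ is multiplied by $a^{i+j+k}$. Comparing the two sides degree by degree in $(X_0,X_1,X_2)$ therefore gives two identities:
\[
F_{0,0,0}=t\,F_{0,0,0},\qquad aF_{1_{\underline{0,1,2}}}=t\,F_{1_{\underline{0,1,2}}}.
\]

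The first step uses the hypothesis $W(X_0,X_1,X_2)\subset\mathcal X$. This hypothesis says exactly that $F_{0,0,0}(X_3,\ldots,X_{n+1})=0$. By Lemma~\ref{casesiv,v,vi,lem1}, the sum $F_{0_{\underline{0,1,2}}}+F_{1_{\underline{0,1,2}}}$ is nonzero, so $F_{1_{\underline{0,1,2}}}\neq0$. The second identity then forces $t=a$. The degree-$\le1$ part of $F$ in $X_0,X_1,X_2$ is then exactly $F_{1,0,0}X_0+F_{0,1,0}X_1+F_{0,0,1}X_2$, which is the displayed formula.

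The remaining step, and the only one with content, is the bound $n\le4$. At a point $p\in W(X_0,X_1,X_2)$, every monomial of degree at least two in $X_0,X_1,X_2$ has vanishing partial derivatives. The partials with respect to $X_3,\ldots,X_{n+1}$ of the linear part $F_{1,0,0}X_0+F_{0,1,0}X_1+F_{0,0,1}X_2$ also vanish there, since each term carries a factor $X_0$, $X_1$ or $X_2$. So the gradient of $F$ at $p$ is
\[
\bigl(F_{1,0,0}(p),F_{0,1,0}(p),F_{0,0,1}(p),0,\ldots,0\bigr).
\]
Hence $p$ is a singular point of $\mathcal X$ whenever $F_{1,0,0}$, $F_{0,1,0}$ and $F_{0,0,1}$ all vanish at $p$. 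These are three homogeneous forms of degree $d-1\ge1$ on $W(X_0,X_1,X_2)\cong\mathbb P^{n-2}$. If $n-2\ge3$, that is $n\ge5$, their common zero locus is nonempty by the projective dimension theorem. This would contradict smoothness, so $n\le4$.

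A small care point is that some of the three forms may be identically zero; this only enlarges the common zero locus, so the argument is unaffected. I expect no real obstacle; the main thing to check is the gradient computation along $W(X_0,X_1,X_2)$, which is routine.
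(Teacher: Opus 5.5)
Your proof is correct and follows essentially the same route as the paper. The paper first gets $t\in\{1,a\}$ and then rules out $t=1$ using $W(X_0,X_1,X_2)\subset\mathcal X$, whereas you read off $F_{0,0,0}=0$ directly and conclude $t=a$; the bound $n\le 4$ is the same dimension count on the common zeros of $F_{1,0,0},F_{0,1,0},F_{0,0,1}$ in $W(X_0,X_1,X_2)\cong\mathbb P^{n-2}$, and your explicit gradient computation fills in a step the paper only asserts (it needs $d\ge 2$, which holds in the paper's setting).
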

\begin{proof}
As in Lemma~\ref{caseii,lem}, we see that $t = a$ or $t = 1$.  
We suppose that $t=1$. Then $\sum_{u=0,1} F_{u_{\underline{0,1,2}}}(X_0,\ldots,X\,_{n+1})=F_{0,0,0}(X_3,\ldots,X_{n+1})$.
Since $F_{0,0,0}(X_3,\ldots,X_{n+1})$ $\neq0$,
this contradicts that $W(X_0, X_1, X_2) \subset \mathcal{X}$.
Thus, $t=a$.
If $n\geq5$, then the intersection
\[
\begin{split}
\{F_{1,0,0}(X_3,\ldots,X_{n+1})=0\}\cap \{&F_{0,1,0}(X_3,\ldots,X_{n+1})=0\}\\
&\cap \{F_{0,0,1}(X_3,\ldots,X_{n+1})=0\}\cap W(X_0,X_1,X_2)
\end{split}
\]
is nonempty.
Then every point in this intersection is a singular point of $\mathcal X$.
This contradicts the smoothness of $\mathcal X$. 
Thus, $n\leq 4$.
\end{proof}
\begin{lem}\label{caseiv,lem2}
Let $\mathcal{X}\subset \mathbb{P}^{n+1}$ be a smooth hypersurface of degree $d$ for $n\geq2$.
We assume that $\mathcal{X}$ admits an automorphism $g:=[D(a,a,a,I_{n-1})]$ where $a\in k^{\times}\backslash\{1\}$. 
Then 
\[
\sum_{u=d-1,d} F_{u\,_{\underline{0,1,2}}}(X_0,\ldots,X_{n+1})\neq0,
\]
and  one of the following holds:
\begin{enumerate}[$(i)$]
\item \( t = a^d \), $W(X_3,\ldots, X_{n+1})\not\subset \mac X$, and 
\[
F_{d-1\,_{\underline{0,1,2}}}(X_0,\ldots,X_{n+1})=0.
\]
\item \( t =a^{d-1} \), $n=4$, $W(X_3,\ldots, X_5)\subset \mac X$, and
\[
F_{d\,_{\underline{0,1,2}}}(X_0,\ldots,X_5)=0.
\]
\end{enumerate}
\end{lem}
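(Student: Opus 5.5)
The plan mirrors Lemma~\ref{casesii,iii,lem1} and Lemma~\ref{caseiv,lem1}, with the roles of the two blocks of variables exchanged. Write $V:=W(X_3,\ldots,X_{n+1})$; this is the plane $\{X_3=\cdots=X_{n+1}=0\}\cong\mathbb P^2$ spanned by $P_0,P_1,P_2$.

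\emph{Nonvanishing.} First I show $\sum_{u=d-1,d}F_{u\,_{\underline{0,1,2}}}\neq0$. Suppose it vanished. Then every monomial of $F$ has degree at least $2$ in $X_3,\ldots,X_{n+1}$. Hence $F$ and all its partial derivatives vanish on $V$, so every point of $V$ is a singular point of $\mathcal X$, which is a contradiction.

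\emph{Determining $t$.} By \eqref{casesiv,v,vi,u}, every monomial in $F_{u\,_{\underline{0,1,2}}}$ is multiplied by $a^u$. So the nonzero pieces $F_{d\,_{\underline{0,1,2}}}$ and $F_{d-1\,_{\underline{0,1,2}}}$ force $t=a^d$ or $t=a^{d-1}$ respectively. Since $a\neq1$, $a^d\neq a^{d-1}$, so the two pieces cannot both be nonzero.
\begin{itemize}
\item If $t=a^d$, then $F_{d-1\,_{\underline{0,1,2}}}=0$ and $F_{d\,_{\underline{0,1,2}}}\neq0$. The form $F_{d\,_{\underline{0,1,2}}}=F|_V$ is then a nonzero form in $X_0,X_1,X_2$, so $V\not\subset\mathcal X$. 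This is case $(i)$.
\item If $t=a^{d-1}$, then $F_{d\,_{\underline{0,1,2}}}=0$, which means $V\subset\mathcal X$. The remaining work is to show $n=4$.
\end{itemize}

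\emph{Case $t=a^{d-1}$, upper bound $n\le4$.} Here the only terms of $F$ of degree at most one in $X_3,\ldots,X_{n+1}$ are $\sum_{l=3}^{n+1}G_l(X_0,X_1,X_2)X_l$, with $G_l$ of degree $d-1$. The singular locus of $\mathcal X$ along $V$ is therefore $\{G_3=\cdots=G_{n+1}=0\}\subset\mathbb P^2$. These are $n-1$ equations in $\mathbb P^2$, so a common zero exists if $n-1\le2$, i.e.\ $n\le3$. Smoothness thus forces $n\ge4$. In the other direction, a smooth hypersurface of dimension $n$ contains no linear subspace of dimension greater than $n/2$. Since $V\cong\mathbb P^2\subset\mathcal X$, this gives $2\le n/2$, i.e.\ $n\ge4$ again.

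\emph{Case $t=a^{d-1}$, lower bound $n\ge4$ needs the other direction.} I need $n\le4$, and I expect this to be the main obstacle. Neither the singular-locus count above nor the linear-subspace bound gives an upper bound on $n$, so this inequality must come from the codimension-two hypothesis of Type IV. In Type IV the fixed component is $W(X_0,X_1,X_2)\cap\mathcal X$, of dimension $n-2$.

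\begin{itemize}
\item If $W(X_0,X_1,X_2)\not\subset\mathcal X$, then $F_{0\,_{\underline{0,1,2}}}\neq0$, which forces $t=1$. Since $t=a^{d-1}$, this gives $a^{d-1}=1$. But $F_{d\,_{\underline{0,1,2}}}=0$, and together with smoothness this conflicts with the first-order terms, using the argument of Lemma~\ref{caseiv,lem1} with the roles swapped.
\item If $W(X_0,X_1,X_2)\subset\mathcal X$, then Lemma~\ref{caseiv,lem1} gives $n\le4$.
\end{itemize}

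So the plan is to combine Lemma~\ref{caseiv,lem1} (giving $n\le4$ and $t=a$) with the present $t=a^{d-1}$ and the bound $n\ge4$ from the smoothness of $\mathcal X$ along $V$, yielding $n=4$. For the branch where $W(X_0,X_1,X_2)\not\subset\mathcal X$, I would try first to show directly that $t=1$ is incompatible with $V\subset\mathcal X$ when $n\ge5$. The idea is to intersect $\{G_l=0\}$ with the equations coming from the $X_l$-linear terms of $F_{0\,_{\underline{0,1,2}}}$.
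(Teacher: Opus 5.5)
The main line of your argument is correct: the nonvanishing of $F_{d-1\,_{\underline{0,1,2}}}+F_{d\,_{\underline{0,1,2}}}$, the dichotomy $t=a^d$ versus $t=a^{d-1}$, and case $(i)$. When $t=a^{d-1}$, so are the inclusion $V\subset\mathcal X$ and the bound $n\ge4$ from the $n-1$ forms $G_l$ on $V\cong\mathbb P^2$. (Your headings have ``upper'' and ``lower'' swapped, but the content is right.)

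The step that fails is the first bullet, where you expect a contradiction when $W(X_0,X_1,X_2)\not\subset\mathcal X$. There is none, because in that branch the conclusion $n=4$ is false. Take $n=5$, $d=3$ and
\[
F=X_0^2X_3+X_1^2X_4+X_2^2X_5+X_3^3+X_4^3+X_5^3+X_6^3 .
\]
The partial derivatives are $2X_0X_3,\ X_0^2+3X_3^2$, the analogous pairs for $(X_1,X_4)$ and $(X_2,X_5)$, and $3X_6^2$. Their only common zero is the origin, so $\mathcal X$ is smooth. Moreover $g=[D(-1,-1,-1,I_4)]$ is an automorphism with $t=1=a^{d-1}\neq a^d$ and $F_{3\,_{\underline{0,1,2}}}=0$, so neither $(i)$ nor $(ii)$ holds; here $W(X_0,X_1,X_2)\not\subset\mathcal X$. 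Adding more cubes $X_l^3$ gives counterexamples for every $n\ge5$. So your closing plan, to show that $t=1$ is incompatible with $V\subset\mathcal X$ for $n\ge5$, cannot succeed.

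What is missing is the standing Type~IV hypothesis $W(X_0,X_1,X_2)\subset\mathcal X$. The component $\mathcal X\cap W(X_0,X_1,X_2)$ has codimension two only if it is all of $W(X_0,X_1,X_2)\cong\mathbb P^{n-2}$. The paper's proof uses this hypothesis tacitly by invoking Lemma~\ref{caseiv,lem1}, which assumes it, and that lemma is where $n\le4$ comes from. So you should add the hypothesis and delete the first bullet, rather than try to refute that branch.

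With that change your proof is complete, and it differs from the paper's only in how $n\ge4$ is obtained. The paper rules out $n=2$ by noting that $F$ would be divisible by $X_3$. It rules out $n=3$ by rewriting $g$ as the Type~II automorphism $[D(I_3,a^{-1},a^{-1})]$ and appealing to case $(i)$ of Lemma~\ref{caseii,lem}. Your count of the common zeros of $G_3,\ldots,G_{n+1}$ on $V\cong\mathbb P^2$ disposes of both values at once; so does the bound $\dim\Lambda\le n/2$ for a linear space $\Lambda$ on a smooth hypersurface. Your count uses only $F_{d\,_{\underline{0,1,2}}}=0$ and smoothness along $V$. It also makes clear that $n\le4$ is exactly the part of the lemma that needs $W(X_0,X_1,X_2)\subset\mathcal X$.
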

\begin{proof}
If $\sum_{u=d-1,d} F_{u\,_{\underline{0,1,2}}}(X_0,\ldots,X_{n+1})=0$, then the points $P_0$, $P_1$, and $P_2$ are singular
points of $\mathcal X$.
This contradicts the smoothness of $\mathcal X$.
Therefore, we have that $\sum_{u=d-1,d} F_{u\,_{\underline{0,1,2}}}(X_0,\ldots,X_{n+1})\neq0$.
By the equation $(\ref{casesiv,v,vi,u})$,
we have that if $F_{d\,_{\underline{0,1,2}}}(X_0,\ldots,X_{n+1})\neq0$
then $t=a^d$ whereas if $F_{d-1\,_{\underline{0,1,2}}}(X_0,\ldots,X_{n+1})\neq0$ then $t=a^{d-1}$.
Since $a\neq 1$, $a^d\neq a^{d-1}$.  
Thus, exactly one of the above cases occurs.

We assume that $F_{d\,_{\underline{0,1,2}}}(X_0,\ldots,X_{n+1})\neq0$.
If necessary, by permuting the variables $X_0, X_1, X_2$, we may assume that
$F_{d,0,0}(X_3,\ldots,X_{n+1}) \neq 0$.
Then $P_0\not\in \mathcal{X}$.
Consequently, $W(X_3,\ldots,X_{n+1})\not\subset\mathcal{X}$.

We assume that $F_{d-1\,_{\underline{0,1,2}}}(X_0,\ldots,X_{n+1})\neq0$.
By Lemma~$\ref{caseiv,lem1}$,
\[
\begin{split}
F(X_0,\ldots,X_{n+1})
&= \sum_{i,j,k=1}^{d-1} F_{i,j,k}(X_3,\ldots,X_{n+1})X_0^{i}X_1^jX_2^k.
\end{split}
\]
Then $W(X_3,\ldots,X_{n+1})\subset\mathcal{X}$.
If $n=2$, then $F_{i,j,k}(X_3)\in k[X_3]_{d-(i+j+k)}$.
Since \(1 \le i+j+k \le d-1\), it follows that
\(F(X_0,X_1,X_2,X_3)\) is divisible by \(X_3\).
This contradicts the smoothness of \(\mathcal X\).
We assume that \(n=3\).
Since
$g=[D(a,a,a,I_2)]=[D(I_3,a,a)]$,
we are reduced to Case~(ii).
Moreover, as \(i+j+k \le d-1\), this situation falls under
case~(i) of Lemma~\ref{caseii,lem}.
Consequently, \(n\) must be equal to \(2\),
which contradicts the assumption that \(n=3\).
\end{proof}
\begin{thm}\label{caseiv,thm}
Let $\mathcal{X}\subset \mathbb{P}^{n+1}$ be a smooth hypersurface of degree $d$ for $n\geq2$.
We assume that $\mathcal{X}$ admits an automorphism $g:=[D(a,a,a,I_{n-1})]$ and $W(X_0,X_1,X_2)\subset \mathcal X$ where $a\in k^{\times}\setminus\{1\}$.
Then $n\leq4$, and the order $\mr{ord}(g)$ of $g$ divides 
\[
d-1\quad \text{or}\quad d-2.
\]
Moreover, the case where $\mr{ord}(g)$ divides $d-2$ occurs only when $n=4$. 
If $\operatorname{ord}(g) = d-1$ or $d-2$, then
$\mathcal{X} / \langle g \rangle$ is a rational variety.
\end{thm}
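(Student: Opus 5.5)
We will combine Lemma~\ref{caseiv,lem1} and Lemma~\ref{caseiv,lem2}. Since $W(X_0,X_1,X_2)\subset\mathcal X$, Lemma~\ref{caseiv,lem1} gives $n\le 4$ and $t=a$. Lemma~\ref{caseiv,lem2} then leaves two mutually exclusive cases.
\begin{enumerate}[$(i)$]
\item In case $(i)$ we have $t=a^d$, so $a^d=a$ and $a^{d-1}=1$. Since $g=[D(a,a,a,I_{n-1})]$ has order equal to the order of $a$, it follows that $\mathrm{ord}(g)$ divides $d-1$.
\item In case $(ii)$ we have $t=a^{d-1}$ and $n=4$, so $a^{d-2}=1$ and $\mathrm{ord}(g)$ divides $d-2$.
\end{enumerate}
This proves the divisibility assertion, and it shows that the divisor $d-2$ arises only when $n=4$. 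If one wants ``divides $d-2$'' to exclude the overlap with case $(i)$, note that $a^{d-1}=a^{d-2}=1$ forces $a=1$. So an order greater than $1$ cannot lie in both cases simultaneously.

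For rationality we treat the two cases separately.
\begin{itemize}
\item Suppose $\mathrm{ord}(g)=d-1$, which is case $(i)$. Set $P^2=W(X_3,\ldots,X_{n+1})$ and $P^{n-2}=W(X_0,X_1,X_2)$. By hypothesis $P^{n-2}\subset\mathcal X$, and case $(i)$ of Lemma~\ref{caseiv,lem2} gives $P^2\not\subset\mathcal X$. The rational map $f_{P^2,P^{n-2}}$ therefore has degree $m=d-1$. Moreover, $g$ is given by $D(e_{d-1},e_{d-1},e_{d-1},I_{n-1})$ with $e_{d-1}=a$ primitive, which is exactly the matrix of Theorem~\ref{back} with $r=2$. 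Theorem~\ref{back} then shows that $f_{P^2,P^{n-2}}$ is Galois with group $\langle g\rangle$. Hence $\mathcal X/\langle g\rangle$ is birational to $\mathbb P^2\times\mathbb P^{n-2}$ and is rational.
\item Suppose $\mathrm{ord}(g)=d-2$, which is case $(ii)$ with $n=4$. Here both $W(X_0,X_1,X_2)$ and $W(X_3,X_4,X_5)$ lie in $\mathcal X$, so $f_{P^2,P^2}$ has degree $d-2$. Again $g$ is the matrix $D(e_{d-2}I_3,I_3)$, and Theorem~\ref{back} gives a Galois map to $\mathbb P^2\times\mathbb P^2$ with group $\langle g\rangle$. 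Hence the quotient is rational.
\end{itemize}

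The main point to check is the identification of $\deg f_{P^r,P^{n-r}}$ with the order of $g$ in each case. This requires the containment or non-containment data of the two linear spaces in $\mathcal X$, which are exactly the statements recorded in Lemma~\ref{caseiv,lem2}. Theorem~\ref{back} also requires $(n,d)\neq(2,4)$ and that automorphisms be linear. We treat the small-degree exceptions (where $d-1$ or $d-2\le 2$) as covered by the standing assumptions; alternatively, one can note directly that the quotient map factors birationally through $f$, since the invariants of $\langle g\rangle$ separate exactly the fibres of the projection.
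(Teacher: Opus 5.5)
Your proposal is correct and follows the paper's proof essentially step for step. You get $t=a$ and $n\le 4$ from Lemma~\ref{caseiv,lem1}, then use the dichotomy of Lemma~\ref{caseiv,lem2} to obtain $a^{d-1}=1$, or $a^{d-2}=1$ with $n=4$. For rationality you apply Theorem~\ref{back} with $r=2$, using the (non-)containment of $W(X_3,\ldots,X_{n+1})$ recorded in Lemma~\ref{caseiv,lem2}. Your remark that $a^{d-1}=a^{d-2}=1$ forces $a=1$, so that ``divides $d-2$'' really does force case $(ii)$ and hence $n=4$, makes explicit a step the paper leaves implicit. The $(n,d)\neq(2,4)$ caveat is unnecessary, since Theorem~\ref{back} is the easy direction and does not use it.
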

\begin{proof}
By Lemma $\ref{caseiv,lem1}$, we have
\[
t=a.
\]
By Lemma $\ref{caseiv,lem2}$,
\[
a=
\left\{
\begin{aligned}
&a^d&\ \ (\mr{case}~(i)\ \mr{of\ Lemma}~\ref{caseiv,lem2}),\\
&a^{d-1}\ \ \mr{and}\ \ n=4&\ \ (\mr{case}~(ii)\ \mr{of\ Lemma}~\ref{caseiv,lem2}).
\end{aligned}
\right.
\]
Thus, $\operatorname{ord}(g)$ divides $d-1$ or $d-2$.
In particular, if $\operatorname{ord}(g)=d-1$ (resp.\ $d-2$), then we are in case~(i) of Lemma~\ref{caseiv,lem2} (resp.\ case~(ii) of Lemma~\ref{caseiv,lem2}). In this situation, we have
$W(X_3,\ldots,X_{n+1}) \not\subset \mathcal{X}$ (resp. $W(X_3,\ldots,X_5) \subset \mathcal{X}$).
Therefore, by Theorem~\ref{back}, $\mathcal{X}/\langle g\rangle$ is rational.
\end{proof}

\subsection{Type V}
We now consider Type V and assume that $a=b$ and $a\neq c$.
\begin{lem}\label{casev,1}
Let $\mathcal{X}\subset \mathbb{P}^{n+1}$ be a smooth hypersurface of degree $d$ for $n\geq2$.
We assume that $W(X_0,X_1,X_2)\subset \mathcal X$ and
$\mathcal{X}$ admits an automorphism $g:=[D(a,a,c,I_{n-1})]$
where $a,c\in k^{\times}\backslash\{1\}$ with $a\neq c$.
Then one of the following holds:
\begin{enumerate}[$(i)$]
\item \( t = a \), $n\leq 3$, and
\begin{equation*}
\begin{split}
\sum_{u=0,1} F_{u\,_{\underline{0,1,2}}}(X_0,\ldots,X_{n+1})
=F_{1,0,0}(X_3,\ldots,X_{n+1})X_0+ F_{0,1,0}(X_3,\ldots,X_{n+1})X_1.
\end{split}
\end{equation*}
\item \( t = c \), $n=2$, and
\[
\sum_{u=0,1} F_{u\,_{\underline{0,1,2}}}(X_0,X_1,X_2,X_{3})= F_{0,0,1}(X_3)X_2 .
\]
\end{enumerate}
\end{lem}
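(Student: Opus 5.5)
We follow the pattern of the proofs of Lemma~\ref{caseii,lem} and Lemma~\ref{caseiv,lem1}. First compare the parts of degree at most one in $X_0,X_1,X_2$ on both sides of \eqref{casesiv,v,vi,u}. The monomials $X_0,X_1$ are multiplied by $a$, the monomial $X_2$ by $c$, and the constant term (in these variables) by $1$. By Lemma~\ref{casesiv,v,vi,lem1}, $\sum_{u=0,1}F_{u\,_{\underline{0,1,2}}}\neq0$. Since $1,a,c$ are pairwise distinct, $t\in\{1,a,c\}$. Moreover, only the summands whose eigenvalue equals $t$ survive:
\begin{itemize}
\item if $t=1$, only $F_{0,0,0}$ survives;
\item if $t=a$, only $F_{1,0,0}X_0+F_{0,1,0}X_1$ survives;
\item if $t=c$, only $F_{0,0,1}X_2$ survives.
\end{itemize}

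Next we exclude $t=1$. In that case $\sum_{u=0,1}F_{u\,_{\underline{0,1,2}}}=F_{0,0,0}\neq0$. Then $F$ restricted to $W(X_0,X_1,X_2)$ equals $F_{0,0,0}\not\equiv0$, which contradicts $W(X_0,X_1,X_2)\subset\mathcal X$.

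Now we bound $n$ in the two remaining cases, using the same singular-point argument as in Lemma~\ref{caseiv,lem1}. A point $p\in W(X_0,X_1,X_2)$ lies on $\mathcal X$. Its partial derivatives in $X_3,\ldots,X_{n+1}$ vanish there, because the part of $F$ of degree $0$ in $X_0,X_1,X_2$ is zero. Its derivatives in $X_0,X_1,X_2$ at $p$ are $F_{1,0,0}(p),F_{0,1,0}(p),F_{0,0,1}(p)$.
\begin{itemize}
\item If $t=a$, then $F_{0,0,1}=0$. The common zero locus of $F_{1,0,0}$ and $F_{0,1,0}$ in $W(X_0,X_1,X_2)\cong\mathbb P^{n-2}$ is nonempty as soon as $n-2\geq2$. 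This yields a singular point for $n\geq4$, so $n\leq3$.
\item If $t=c$, then only $F_{0,0,1}$ remains. Its zero locus in $\mathbb P^{n-2}$ is nonempty once $n\geq3$. Here $F_{0,0,1}$ is a form of degree $d-1\geq1$, and a nonzero constant is impossible since $d\geq3$. Hence $n=2$, and the sum reduces to $F_{0,0,1}(X_3)X_2$.
\end{itemize}

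The main point needing care is the dimension count. A form of positive degree on $\mathbb P^{m}$ with $m\geq1$ has a zero, and two forms on $\mathbb P^m$ with $m\geq2$ have a common zero. We must also confirm that $F_{1,0,0},F_{0,1,0},F_{0,0,1}$ have degree $d-1\geq1$, so they are not constants. Beyond that the argument is routine coefficient comparison.
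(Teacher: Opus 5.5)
Your proof is correct and follows the same route the paper indicates. The paper omits the proof and refers to the argument of Lemma~\ref{caseii,lem}, carried out explicitly in Lemma~\ref{caseiv,lem1}: compare the part of degree at most one in $X_0,X_1,X_2$ to get $t\in\{1,a,c\}$, use $W(X_0,X_1,X_2)\subset\mathcal X$ to rule out $t=1$, and note that common zeros of the surviving coefficients $F_{1,0,0},F_{0,1,0}$ (resp.\ $F_{0,0,1}$) on $W(X_0,X_1,X_2)\cong\mathbb P^{n-2}$ would be singular points unless $n\le 3$ (resp.\ $n=2$).
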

\begin{proof}
The argument is similar to that in the proof of Lemma~\ref{caseii,lem}.
We omit the proof.
%
%
\end{proof}

After permuting $X_0$ and $X_1$ if necessary, 
it suffices to consider the following possible configurations of 
\(\mathcal{X} \cap \{P_0, P_1,P_2\}\):
\begin{itemize}
\item \(\mathcal{X} \cap \{P_0, P_1,P_2\} = \emptyset\);
\item \(\mathcal{X} \cap \{P_0, P_1,P_2\} = \{P_2\}\);
\item \(\mathcal{X} \cap \{P_0, P_1,P_2\} = \{P_1\}\);
\item \(\mathcal{X} \cap \{P_0, P_1,P_2\} = \{P_1,P_2\}\);
\item \(\mathcal{X} \cap \{P_0, P_1,P_2\} = \{P_0,P_1\}\);
\item \(\mathcal{X} \cap \{P_0, P_1,P_2\} = \{P_0, P_1,P_2\}\).
\end{itemize}
\begin{thm}\label{casev,thm}
Let $\mathcal{X}\subset \mathbb{P}^{n+1}$ be a smooth hypersurface of degree $d$.
We assume that $\mathcal{X}$ admits an automorphism $g:=[D(a,a,c,I_{n-1})]$ and $W(X_0,X_1,X_2)\subset \mathcal X$ where $a,c\in k^{\times}\setminus\{1\}$ with $a\neq c$.
Then $n\leq 3$.   If $n=3$, then the order $\mr{ord}(g)$ of $g$ divides one of the integers
\[
(d-1)d,\quad (d-1)^2,\quad (d-2)d,\quad d^2-3d+3,\quad\mr{or}\quad (d-2)(d-1).
\]
If $n=2$, then $\mr{ord}(g)$ divides one of the integers
\[
(d-1)d,\quad (d-1)^2,\quad\mr{or}\quad (d-2)d. 
\]
Moreover, if $\operatorname{ord}(g) = ld$ or $l(d-1)$ for $l\geq2$, then $\mathcal{X} / \langle g \rangle$ is a rational variety.
\end{thm}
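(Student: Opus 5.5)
The bound $n\le 3$ comes directly from Lemma~\ref{casev,1}: in case $(i)$ we have $n\le3$, and in case $(ii)$ we have $n=2$. For the orders, I would follow the template of Theorem~\ref{caseiii,thm} and run through the six configurations of $\mathcal X\cap\{P_0,P_1,P_2\}$ listed above. In each configuration, the points $P_0,P_1,P_2$ force $F$ to contain certain monomials: $X_i^d$ if $P_i\notin\mathcal X$, and $X_i^{d-1}X_j$ for some $j\ne i$ if $P_i\in\mathcal X$. Plugging each such monomial into \eqref{casesiv,v,vi,u} gives an equation for $t$ in terms of $a,c$. By Lemma~\ref{casev,1} we know $t=a$ (with $n\le3$) or $t=c$ (with $n=2$). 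Since $\mathrm{ord}(g)$ is the lcm of the orders of $a$ and $c$, it suffices to solve the resulting system for the orders of $a$ and $c$.

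The generic subcases go as follows. Both $X_0^d$ and $X_1^d$ (or the pair $X_0^{d-1}X_1,\ X_1^{d-1}X_0$) give $a^d=t$, or $a^{d-1}=t$ if the $j$'s lie outside $\{0,1\}$. Combining this with $t=a$ gives $a^{d-1}=1$ or $a^{d-2}=1$. Combining it with $t=c$ gives $c=a^d$ or $c=a^{d-1}$. For $P_2$, the monomial $X_2^d$ gives $c^d=t$, while $X_2^{d-1}X_j$ gives $c^{d-1}a=t$ for $j\in\{0,1\}$, or $c^{d-1}=t$ for $j\ge3$. The typical cyclic substitutions then produce the listed integers.
\begin{itemize}
\item With $t=a$ and $c^{d-1}a=a$, we get $c^{d-1}=1$, so the order divides $\mathrm{lcm}$-type products such as $(d-1)^2$ or $(d-2)(d-1)$ when $a^{d-2}=1$ and $c^{d-1}=1$.
\item A chain of the form $a^{d-1}c=t=a$ together with $c^{d-1}=\dots$ gives $c=a^{2-d}$, and then $(d-2)(d-1)$- or $(d^2-3d+3)$-type exponents arise, exactly as in Badr--Bars (Theorem~\ref{bb1}).
\end{itemize}
I expect the $d^2-3d+3$ case to come from the configuration where $P_0,P_1,P_2\in\mathcal X$ with a cyclic pattern of monomials $X_0^{d-1}X_1$, $X_1^{d-1}X_2$, $X_2^{d-1}X_0$ (up to the Type V symmetry). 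The $(d-2)(d-1)$ case should come from $t=a$ combined with $a^{d-2}=1$ and $c^{d-1}=1$. These arise only when $n=3$, since in case $(ii)$ the relation $t=c$ occurs only for $n=2$. For $n=2$, I would check that the pieces with $t=a$ and $a^{d-2}=1$ contradict smoothness, via Lemma~\ref{caseiv,lem2}-style divisibility by $X_3$. This would leave only $(d-1)d$, $(d-1)^2$ and $(d-2)d$.

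For the rationality statement, suppose $\mathrm{ord}(g)=ld$ or $l(d-1)$ with $l\ge2$. Then $g^l$ has order $d$ or $d-1$ and is of Type I or Type II: for instance $[D(1,1,c^l,I_{n-1})]$ or $[D(a^l,a^l,I_{n})]$ after rescaling. I would apply Theorem~\ref{casei,thm}, Theorem~\ref{caseii,thm} or Theorem~\ref{back} to $g^l$, checking the containment conditions $W(\cdot)\not\subset\mathcal X$ from the monomials found above, exactly as in Theorem~\ref{caseiii,thm}. Rationality of $\mathcal X/\langle g^l\rangle$ then gives rationality of $\mathcal X/\langle g\rangle$, since the latter is dominated by the former and L\"uroth/Castelnuovo-type unirationality suffices in the way the paper already uses it.

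The main obstacle is the case bookkeeping. Each of the six configurations splits by the choice of $j$ and by $t=a$ versus $t=c$, and one must verify in each branch that the order divides one of the listed integers. One must also exclude, via smoothness arguments, the branches that would give orders outside the list; these are mainly the $n=2$ branches with $t=a$.
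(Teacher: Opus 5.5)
Your bound $n\le 3$ and your treatment of $n=3$ match the paper: the six configurations of $\mathcal X\cap\{P_0,P_1,P_2\}$, with $t=a$ forced by Lemma~\ref{casev,1}. The genuine gap is at $n=2$. There both $t=a$ and $t=c$ occur, and the naive bookkeeping produces orders outside the list $(d-1)d$, $(d-1)^2$, $(d-2)d$ in branches that your exclusion step (``$t=a$ and $a^{d-2}=1$, ruled out by divisibility by $X_3$'') does not reach.

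\begin{itemize}
\item Take $t=a$ with monomials $X_0^{d-1}X_2$, $X_1^{d-1}X_2$, $X_2^{d-1}X_3$. You get $ca^{d-2}=1$ and $c^{d-1}=a$, hence only $c^{d^2-3d+3}=1$. Here $a^{d-2}\neq 1$ (else $c=1$), and $F$ is not divisible by $X_3$, since it contains $X_0^{d-1}X_2$.
\item Take $t=c$ (so $X_2X_3^{d-1}$ occurs) with monomials $X_0^{d-1}X_3$, $X_1^{d-1}X_3$, together with $X_2^{d-1}X_0$ (resp.\ $X_2^{d-1}X_3$). You get $a^{d-1}=c$ and $ac^{d-2}=1$ (resp.\ $c^{d-2}=1$), so the order only divides $d^2-3d+3$ (resp.\ $(d-1)(d-2)$). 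Your exclusion step does not address $t=c$ branches at all.
\end{itemize}

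Relatedly, ``these arise only when $n=3$, since $t=c$ occurs only for $n=2$'' is a non sequitur. The $d^2-3d+3$ and $(d-2)(d-1)$ branches of the $n=3$ analysis are $t=a$ branches, and those exist equally when $n=2$.

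The missing idea is the rescaling the paper uses. For $n=2$, $[D(a,a,c,1)]=[D(1,1,ca^{-1},a^{-1})]$, where $ca^{-1}$ and $a^{-1}$ are distinct and different from $1$. So $g$ is of Type~III with respect to $(X_2,X_3)$. Lemma~\ref{caseiii,1} (smoothness along the line $X_2=X_3=0$) then gives $F(X_0,X_1,0,0)\neq 0$ and no terms of degree one in $(X_2,X_3)$. In your normalization this says $t=a^d$, which kills every spurious branch: $t=a$ forces $a^{d-1}=1$, $t=c$ forces $c=a^d$, and the monomial attached to $P_2$ then yields only $(d-1)d$, $(d-1)^2$, $(d-2)d$. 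Simplest is to quote Theorem~\ref{caseiii,thm} directly, which gives both the order list and the rationality clause for $n=2$; this is exactly the paper's argument.

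Two smaller points.
\begin{itemize}
\item For $n=3$ you cannot pass from rationality of $\mathcal X/\langle g^l\rangle$ to that of $\mathcal X/\langle g\rangle$ by domination plus L\"uroth/Castelnuovo, since a unirational threefold need not be rational. The correct reason is that the diagonal $g$ descends along the Galois projection with group $\langle g^l\rangle$ to a diagonal automorphism of $\mathbb P^3$ or $\mathbb P^2\times\mathbb P^1$. The quotient of such a target by a finite diagonal group is toric, hence rational.
\item The order $d^2-3d+3$ does not come from your cyclic pattern: there $X_0^{d-1}X_1$ and $X_1^{d-1}X_2$ give $t=a^d=a^{d-1}c$, forcing $a=c$. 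It comes instead from $X_0^{d-1}X_2$, $X_1^{d-1}X_2$, $X_2^{d-1}X_k$ with $k\ge 3$.
\end{itemize}
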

\begin{proof}
By Lemma~\ref{casev,1}, we have $n \leq 3$.
First, we assume that $n = 2$. In this case,
\[
g = [D(a,a,c,1)] = [D(I_2, ca^{-1}, a^{-1})],
\]
and hence the situation is reduced to Case~(iii).
Therefore, it follows from Theorem~\ref{caseiii,thm} that Theorem~\ref{casev,thm} holds.
Consequently, in what follows, we may assume that $n = 3$.

First, we assume that $\mathcal{X} \cap \{P_0, P_1,P_2\} = \emptyset$. 
Then $F(X_0,\ldots, X_{n+1})$ contains the monomials $X_0^{d}$, $X_1^{d}$, and $X_2^{d}$. 
By the equation $\eqref{casesiv,v,vi,u}$ and Lemma~\ref{casev,1},
\[
t=a^{d}=c^{d}=a.
\]
We obtain
$a^{d-1}=1$ and $c^{d}=a$.
By substituting \(a=c^{d}\) into \(a^{d-1}=1\), we get $c^{(d-1)d}=1$.
Since
\[
a=c^{d}\quad\mr{and}\quad c^{(d-1)d}=1,
\]
$\mr{ord}(g)\ \mr{divides}\ (d-1)d$.
In addition, if $\operatorname{ord}(g) =l(d-1)$ for $l\geq2$, then $g^l=[D(c^l,c^l,c^l,I_2)]$.
Since $P_0\not\in\mac X$, $W(X_3,X_4)\not\subset \mac X$.
By Theorem \ref{back}, $\mathcal{X} / \langle g^l \rangle$ is rational, and hence $X/\langle g\rangle$ is rational.
If $\operatorname{ord}(g) =ld$ for $l\geq2$, then $g^l=[D(I_2,c^l,I_2)]$.
Theorem~\ref{r1} implies that $\mathcal{X} / \langle g^l \rangle$ is rational, and hence $\mac X/\langle g\rangle$ is rational.
\\

Second, we assume that $\mathcal{X} \cap \{P_0, P_1,P_2\} = \{P_2\}$. 
Then $F(X_0,\ldots, X_{n+1})$ contains the monomials $X_0^{d}$, $X_1^{d}$, and $X_iX_2^{d-1}$ for some $i\neq2$.

We assume that $i\in\{0,1\}$.
By the equation $\eqref{casesiv,v,vi,u}$ and Lemma~\ref{casev,1},
\[
t=a^{d}=ac^{d-1}=a.
\]
Since
\[
a^{d-1}=c^{d-1}=1,
\]
$\mr{ord}(g)$ divides $d-1$.

We assume that $i\not\in\{0,1\}$.
By the equation $\eqref{casesiv,v,vi,u}$ and Lemma~\ref{casev,1},
\[
t=a^{d}=c^{d-1}=a.
\]
We obtain
$a^{d-1}=1$ and $c^{d-1}=a$.
By substituting \(a=c^{d-1}\) into \(a^{d-1}=1\), we get $c^{(d-1)^2}=1$.
Since
\[
a=c^{d-1}\quad\mr{and}\quad c^{(d-1)^2}=1,
\]
$\mr{ord}(g)\ \mr{divides}\ (d-1)^2$.
In addition, if $\operatorname{ord}(g) =l(d-1)$ for $l\geq2$, then $g^l=[D(I_2,c^l,I_2)]$.
Theorem~\ref{r1} implies that $\mathcal{X} / \langle g^l \rangle$ is rational, and hence $\mac X/\langle g\rangle$ is rational.
\\

Third, we assume that $\mathcal{X} \cap \{P_0, P_1,P_2\} = \{P_1\}$. 
Then $F(X_0,\ldots,X_{n+1})$ contains monomials $X_0^{d}$, $X_i X_1^{d-1}$, and $X_2^{d}$ for some $i \neq 1$.
If $i=2$, then the equation $\eqref{casesiv,v,vi,u}$ implies that
\[
t=a^d=ca^{d-1}=c^d.
\]
By $a^d=ca^{d-1}$, we obtain $a=c$. This contradicts $a \neq c$. Thus, $i\neq 2$, and hence $i \notin \{0,2\}$.
By the equation $\eqref{casesiv,v,vi,u}$,
\[
t=a^d=a^{d-1}=c^d.
\]
By $a^d=a^{d-1}$, we get $a = 1$. This contradicts $a \neq 1$.
Therefore, we conclude that $i = 0$.
By the equation $\eqref{casesiv,v,vi,u}$, 
\[
t=a^d=a^d=c^d.
\]
The situation reduces to the case where $\mathcal{X}\cap\{P_0,P_1,P_2\}=\emptyset$.
Hence, $\mr{ord}(g)$ divides $(d-1)d$.
Moreover, if $\mr{ord}(g)=l(d-1)$ or $ld$ for $l\geq2$, then $\mac X/\langle g\rangle$ is rational.
\\

Fourth, we assume that $\mathcal{X} \cap \{P_0, P_1,P_2\} = \{P_1,P_2\}$. 
Then $F(X_0,\ldots,X_{n+1})$ contains monomials $X_0^{d}$, $X_i X_1^{d-1}$, and $X_jX_2^{d-1}$ for some $i \neq 1$ and $j \neq 2$.
In the same way, we have 
\[
i=0.
\]
Then $t=a^d$, and the argument reduces to the case where $\mathcal{X} \cap \{P_0, P_1,P_2\} = \{P_2\}$.
Hence, $\mr{ord}(g)$ divides  $(d-1)^2$.
Moreover, if $\mr{ord}(g)=l(d-1)$ for $l\geq2$, then $\mac X/\langle g\rangle$ is rational.
\\

Fifth, we assume that $\mathcal{X} \cap \{P_0, P_1,P_2\} = \{P_0,P_1\}$. 
Then $F(X_0,\ldots, X_{n+1})$ contains the monomials $X_iX_0^{d-1}$, $X_jX_1^{d-1}$, and $X_2^{d}$ for some $i\neq0$ and $j\neq1$.

We assume that $i=1$.
Since $a \neq c$, 
by the equation $\eqref{casesiv,v,vi,u}$ we have $j \neq 2$.
If $j \notin \{0,2\}$, then by the equation $\eqref{casesiv,v,vi,u}$ we get $a = 1$, which contradicts the assumption $a \neq 1$.
Therefore, we conclude that $j = 0$, and hence
\[
t=a^d.
\]
By the equation $\eqref{casesiv,v,vi,u}$, the situation reduces to the case where $\mathcal{X} \cap \{P_0, P_1,P_2\} =\emptyset$. Hence, $\mr{ord}(g)$ divides $(d-1)d$.
Moreover, if $\mr{ord}(g)=l(d-1)$ or $ld$ for $l\geq2$, then $\mac X/\langle g\rangle$ is rational.

Similarly, we assume that $j\neq0$.
We assume that $i\neq1$.
In addition, we assume that $i=2$.
If $j\neq2$, then
by the equation $\eqref{casesiv,v,vi,u}$ we have $c=1$.
This contradicts that $c\neq1$.
Thus, $j=2$, and hence 
\[
t=ca^{d-1}.
\]
We assume that $k\in\{0,1\}$.
By the equation $\eqref{casesiv,v,vi,u}$ and Lemma~\ref{casev,1},
\[
t=ca^{d-1}=c^{d}=a.
\]
We obtain
$ca^{d-2}=1$ and $c^{d}=a$.
By substituting \(a=c^{d}\) into \(ca^{d-2}=1\), we get $c^{(d-1)^2}=1$.
Since
\[
a=c^d\quad \mr{and}\quad c^{(d-1)^2}=1,
\]
$\mr{and}\ \mr{ord}(g)\ \mr{divides}\ (d-1)^2$.
In addition, if $\operatorname{ord}(g) =l(d-1)$ for $l\geq2$, then $g^l=[D(c^l,c^l,c^l,I_2)]$.
Since $P_0\not\in\mac X$, $W(X_3,X_4)\not\subset \mac X$.
By Theorem \ref{back}, $\mathcal{X} / \langle g^l \rangle$ is rational, and hence $X/\langle g\rangle$ is rational.

We assume that $i\not\in\{0,1\}$.
By the case $i=2$, we get that $j\not\in\{0,2\}$.
By the equation $\eqref{casesiv,v,vi,u}$ and Lemma~\ref{casev,1},
\[
t=a^{d-1}=c^{d}=a.
\]
We obtain
$a^{d-2}=1$ and $c^{d}=a$.
By substituting \(a=c^{d}\) into \(a^{d-2}=1\), we get $c^{(d-2)d}=1$.
Since
\[
a=c^d\quad \mr{and}\quad c^{(d-2)d}=1,
\]
$\mr{ord}(g)\ \mr{divides}\ (d-2)d$.
In addition, if $\operatorname{ord}(g) =ld$ for $l\geq2$, then $g^l=[D(I_2,c^l,I_2)]$.
Theorem~\ref{r1} implies that $\mathcal{X} / \langle g^l \rangle$ is rational, and hence $\mac X/\langle g\rangle$ is rational.
\\

Finally, we assume that $\mathcal X\cap \{P_0,P_1,P_2\}=\{P_0,P_1,P_2\}$. Then 
$F(X_0,\ldots, X_{n+1})$ contains the monomials $X_iX_0^{d-1}$, $X_jX_1^{d-1}$, and $X_kX_2^{d-1}$ for some $i\neq0$, $j\neq1$, and $k\neq 2$. 

We assume that $i=1$.
Then
\[
t=a^d.
\]
Since $a \neq 1$ and $a\neq c$,  we have $j=0$.
Then the situation reduces to the case where $\mathcal X\cap \{P_0,P_1,P_2\}=\{P_2\}$.
Hence, $\mr{ord}(g)$ divides $(d-1)^2$.
Moreover, if $\mr{ord}(g)=l(d-1)$ for $l\geq2$, then $\mac X/\langle g\rangle$ is rational.

Similarly, if $j=0$, then $t=a^d$ and the situation reduces to the case where $\mathcal X\cap \{P_0,P_1,P_2\}=\{P_2\}$.
Hence, $\mr{ord}(g)$ divides $(d-1)^2$.
Moreover, if $\mr{ord}(g)=l(d-1)$ for $l\geq2$, then $\mac X/\langle g\rangle$ is rational.

Now, we assume that $i\neq 1$ and $j\neq0$.
We assume that $i=2$.
Then 
\[
t=ca^{d-1}.
\]
Since $c\neq1$, we have $j=2$.

In addition, we assume that $k\in\{0,1\}$.
By the equation $\eqref{casesiv,v,vi,u}$ and Lemma~\ref{casev,1},
\[
t=ca^{d-1}=ac^{d-1}=a.
\]
We obtain $ca^{d-2}=c^{d-1}=1$.
By substituting \(c=a^{2-d}\) into \(c^{d-1}=1\), we get $a^{(d-2)(d-1)}=1$.
Since
\[
c=a^{2-d}\quad \mr{and}\quad a^{(d-2)(d-1)}=1,
\]
$\mr{ord}(g)\ \mr{divides}\ (d-2)(d-1)$.
In addition, if $\operatorname{ord}(g) =l(d-1)$ for $l\geq2$, then $g^l=[D(I_2,c^l,I_2)]$.
Theorem~\ref{r1} implies that $\mathcal{X} / \langle g^l \rangle$ is rational, and hence $\mathcal X/\langle g\rangle$ is rational.

We assume that $k\not\in\{0,1\}$.
By the equation $\eqref{casesiv,v,vi,u}$ and Lemma~\ref{casev,1},
\[
t=ca^{d-1}=c^{d-1}=a.
\]
We obtain
$ca^{d-2}=1$ and $c^{d-1}=a$.
By substituting \(a=c^{d-1}\) into \(ca^{d-2}=1\), we get $c^{d^2-3d+3}=1$.
Since
\[
a=c^{d-1}\quad\mr{and}\quad c^{d^2-3d+3}=1,
\]
$\mr{ord}(g)\ \mr{divides}\ d^2-3d+3$.

We assume that $i\neq2$.
Since $a \neq 1$ and $c\neq1$, 
by the equation $\eqref{casesiv,v,vi,u}$ we have $j\neq2$.
Then
\[
t=a^{d-1}.
\]

We assume that $k\in\{0,1\}$.
By the equation $\eqref{casesiv,v,vi,u}$ and Lemma~\ref{casev,1},
\[
t=a^{d-1}=ac^{d-1}=a.
\]
Since
\[
a^{d-2}=c^{d-1}=1,
\]
$\mr{ord}(g)\ \mr{divides}\ (d-2)(d-1)$.
In addition, if $\operatorname{ord}(g) =l(d-1)$ for $l\geq2$, then $g^l=[D(I_2,c^l,I_2)]$.
Theorem~\ref{r1} implies that $\mathcal{X} / \langle g^l \rangle$ is rational, and hence $\mac X/\langle g\rangle$ is rational.

We assume that $k\not\in\{0,1\}$.
By the equation $\eqref{casesiv,v,vi,u}$ and Lemma~\ref{casev,1},
\[
t=a^{d-1}=c^{d-1}=a.
\]
We obtain
$a^{d-2}=1$ and $c^{d-1}=a$.
By substituting \(a=c^{d-1}\) into \(a^{d-2}=1\), we get $c^{(d-2)(d-1)}=1$.
Since
\[
a=c^{d-1}\quad\mr{and}\quad c^{(d-2)(d-1)}=1,
\]
$\mr{ord}(g)\ \mr{divides}\ (d-2)(d-1)$.
In addition, if $\operatorname{ord}(g) =l(d-1)$ for $l\geq2$, then $g^l=[D(I_2,c^l,I_2)]$.
Theorem~\ref{r1} implies that $\mathcal{X} / \langle g^l \rangle$ is rational, and hence $\mac X/\langle g\rangle$ is rational.
\end{proof}

In [\ref{bio:th21l}], the following result is stated.
\begin{thm}\label{r3}
$([\ref{bio:th21l},\ \mr{Theorem}\ 1.10])$.
Let $\mathcal X \subset \mathbb P^{n+1}$ be a smooth hypersurface of degree
$d \ge 4$ with $n \ge 2$, and let
$g \in \mathrm{PGL}(n+2,k)$ be an automorphism of $\mathcal X$
such that ${\rm ord}(g)=k(d-1)$ for some integer $k \ge 2$.
If one of the following holds, then $\mathcal X$ has a Galois point $P \in \mathcal X$, and
$\langle g^k\rangle$ is the Galois group of the projection
$\mathcal X \dashrightarrow \mathbb P^n$ from the point $P$.
\begin{enumerate}[$(a)$]
\item
$n=2$ and $|{\rm Fix}(g)| \ge 5$;
\item
$n \ge 3$ and ${\rm Fix}(g)$ has codimension two in $\mathcal X$.
\end{enumerate}
\end{thm}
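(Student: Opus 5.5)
Although this result is quoted from [\ref{bio:th21l}], it can be recovered from the case analysis of Section~3. First I would reduce to a diagonal normal form. Since ${\rm Fix}(g)$ has codimension at most two (for $n\ge3$ this is the hypothesis; for $n=2$ it is automatic), some eigenspace $W_i$ of a representative of $g$ satisfies $\dim W_i\ge n-1$, and $r(g)\le4$. So, after a change of coordinates, $g$ is of one of Types~I--VI. The goal is to show that in every case compatible with ${\rm ord}(g)=k(d-1)$, $k\ge2$, the power $g^k$ is a homology
\[
g^k=[D_{n+2}(1_{s},\zeta,1_{n+1-s})],\qquad \zeta \text{ a primitive }(d-1)\text{-st root of unity},
\]
whose isolated fixed point $P=P_s$ lies on $\mathcal X$.

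\textbf{Step 1: use the order lists to locate $g^k$.} The proofs of Theorems~\ref{casei,thm}, \ref{caseii,thm}, \ref{caseiii,thm}, \ref{caseiv,thm} and~\ref{casev,thm} (and the analogous Type~VI computation) list, subcase by subcase, the integers that ${\rm ord}(g)$ divides. Several entries can be discarded by divisibility:
\begin{itemize}
\item $\gcd(d-1,d)=\gcd(d-1,d-2)=1$;
\item $d^2-3d+3\equiv 1 \pmod{d-1}$.
\end{itemize}
Hence if ${\rm ord}(g)=k(d-1)$ with $k\ge2$, then $d-1$ divides ${\rm ord}(g)$, which rules out the subcases where ${\rm ord}(g)$ divides $d$, $d-2$, $(d-2)d$ or $d^2-3d+3$. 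In the surviving subcases, where ${\rm ord}(g)$ divides $(d-1)d$, $(d-1)^2$ or $(d-2)(d-1)$, the paper already records the explicit form of $g^k$, for example $[D(1,b^k,I_n)]$, $[D(a^k,I_{n+1})]$ or $[D(I_2,c^k,I_2)]$. In each such subcase $g^k$ has exactly one eigenvalue different from the others, on a single coordinate $X_s$. Types~II/IV with $g^k=[D(c^l,c^l,c^l,I_2)]$ can be rewritten as $[D(I_3,c^{-l},c^{-l})]$; I would check that these cases either fall outside the order hypothesis or reduce to the homology case.

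\textbf{Step 2: identify the Galois point.} Once $g^k=[D(\zeta,I_{n+1})]$ after renumbering, Theorem~\ref{casei,thm} applies and gives $a^{d-1}=1$ with $t=1$, so the case $P_0\in\mathcal X$ occurs. Comparing coefficients in~\eqref{casei}, only the exponents $i\equiv 0 \pmod{d-1}$ survive, so $F=F_d+X_0^{d-1}F_1$. The projection from $P_0$ is then $X\mapsto[X_1:\cdots:X_{n+1}]$ of degree $d-1$, and it is invariant under $\langle g^k\rangle\cong\mathbb Z/(d-1)$. By degree count it is Galois with group $\langle g^k\rangle$ (cf.\ Theorem~\ref{back} with $r=0$), so $P=P_0$ is a Galois point.

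\textbf{Main obstacle.} The hard part is ruling out, when $n=2$, configurations in which $g^k$ is not a homology. Examples are Type~III subcases in which both $a^k$ and $b^k$ differ from $1$, and Type~VI with three distinct nontrivial eigenvalues. Here the hypothesis $|{\rm Fix}(g)|\ge5$ must be used. I would first count fixed points as in Proposition~\ref{caseiii,pro2}: the eigenspace lines meet $\mathcal X$ in at most $d$ points, and the isolated eigenpoints contribute at most one each. I would then show that the remaining configurations force $|{\rm Fix}(g)|\le4$, or force an order coprime to $d-1$, contradicting either the hypothesis or ${\rm ord}(g)=k(d-1)$. For $n\ge3$ the analogous pathological cases are excluded by the bound $n\le3$ or $n\le4$ and the singularity arguments of Lemmas~\ref{caseiv,lem1} and~\ref{casev,1}.
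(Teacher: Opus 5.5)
Part $(b)$ of this statement is false, so your proposal cannot be completed; the paper says so immediately after quoting the theorem. The step that fails is your claim in Step~1 that in every surviving subcase $g^k$ is a homology, together with your deferred check that the cases with $g^k=[D(c^l,c^l,c^l,I_2)]$ ``either fall outside the order hypothesis or reduce to the homology case''. They do neither.

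These cases do not come from Types~II or~IV. In those types $\operatorname{ord}(g)$ divides $d$, $d-1$ or $d-2$, so $k\ge 2$ never occurs. They come from Type~V with $n=3$, which Lemma~\ref{casev,1} permits, so the bound $n\le 3$ excludes nothing. In the subcase $\mathcal X\cap\{P_0,P_1,P_2\}=\emptyset$ of Theorem~\ref{casev,thm} one has $a=c^d$ and $c^{(d-1)d}=1$. If $\operatorname{ord}(g)=l(d-1)$, then $g^l=[D_5(\eta,\eta,\eta,1,1)]$ with $\eta=c^l$ a primitive $(d-1)$-st root of unity.

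The paper's example realizes this. Take $\mathcal X=\{X_0^d+X_1^d+X_2^d+X_0X_3^{d-1}+X_1X_4^{d-1}=0\}\subset\mathbb P^4$, which is smooth. Let $g=[D_5(\zeta^d,\zeta^d,\zeta,1,1)]$ with $\zeta$ a primitive $d(d-1)$-th root of unity; it has order $d(d-1)$, so $k=d$. Then $\mathrm{Fix}(g)$ is the line $W(X_0,X_1,X_2)\subset\mathcal X$ together with $d$ points, hence has codimension two. Finally $g^d=[D_5(\zeta^d,\zeta^d,\zeta^d,1,1)]$.

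No such element can generate the Galois group of a projection $\pi_P$ from a point $P$. Suppose a linear $\sigma$, with lift $\tilde\sigma$, satisfies $\pi_P\circ\sigma=\pi_P$ on $\mathcal X$, and let $p$ represent $P$. The components of $\tilde\sigma(v)\wedge v\wedge p$ are quadrics in $v$ vanishing on $\mathcal X$. No nonzero quadric contains $\mathcal X$ because $d\ge 3$, so these components vanish identically. Hence $\tilde\sigma(v)\in\langle v,p\rangle$ for all $v$, which forces $\tilde\sigma=\lambda I+p\otimes\phi$. In particular $\tilde\sigma$ has an eigenspace of dimension at least $n+1$. The matrix $D_5(\eta,\eta,\eta,1,1)$ has eigenspaces of dimensions $3$ and $2$, so it fails this for $n=3$. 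By contrast, in $\mathbb P^3$ the analogous $[D_4(\eta,\eta,\eta,1)]$ \emph{is} a homology, which is why the problem is invisible when $n=2$. In this example $\mathcal X/\langle g\rangle$ is still rational, but through the Galois pair of Theorem~\ref{back} (the plane $W(X_3,X_4)$ and the line $W(X_0,X_1,X_2)$), not through a Galois point.

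What your Steps~1--2 actually prove is the corrected Theorem~\ref{r3-corrected}. For $n=3$ one must add the hypothesis that $\mathrm{Fix}(g)$ contains no line. This removes all of Type~V, since there $W(X_0,X_1,X_2)$ is a line inside $\mathrm{Fix}(g)$. In the only remaining type with $\operatorname{ord}(g)=k(d-1)$, $k\ge2$, namely Type~III, $g^k$ is indeed a homology $[D(1,b^k,I_n)]$ or $[D(a^k,I_{n+1})]$ centred at a coordinate point of $\mathcal X$. Separately, your treatment of $n=2$ is only announced: you never carry out the key claim that non-homology configurations force $|\mathrm{Fix}(g)|\le 4$.
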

However, Theorem~\ref{r3} requires a correction.
The reason is that, in the proof of Theorem~\ref{r3}, the case where
${\rm Fix}(g)$ has codimension two was not treated exhaustively:
specifically, Case~(v) was overlooked.
More precisely, when $n=3$ and
$\mathcal X \cap \{P_0,P_1,P_2\} = \emptyset$ in Theorem~\ref{casev,thm},
the fixed locus ${\rm Fix}(g)$ has codimension two in $\mathcal X$
and contains a projective line.
In this situation, the result of~[\ref{bio:th21l}] does not apply.
Indeed, such an example actually exists, as shown below.

\begin{exa}
Let $\mathcal X \subset \mathbb P^4$ be the smooth hypersurface defined by
\[
X_0^d + X_1^d + X_2^d + X_0 X_3^{d-1} + X_1 X_4^{d-1} = 0.
\]
Then $\mathcal X$ admits an automorphism
\[
g := [D(\zeta^{d}, \zeta^{d}, \zeta, I_2)]
\]
of order $d(d-1)$, where $\zeta$ denotes a primitive $d(d-1)$-th root of unity.
\end{exa}
We present below a corrected statement of Theorem~\ref{r3} in the case $n=3$.
The result follows directly from Theorem~\ref{casev,thm}.
\begin{thm}\label{r3-corrected}
Let $\mathcal X \subset \mathbb P^4$ be a smooth hypersurface of degree
$d$, and let
$g \in \mathrm{PGL}(5,k)$ be an automorphism of $\mathcal X$
such that ${\rm ord}(g)=k(d-1)$ for some integer $k \ge 2$.
If ${\rm Fix}(g)$ has codimension two in $\mathcal X$,
and ${\rm Fix}(g)$ does not contain a projective line, then $\mathcal X$ has a Galois point $P \in \mathcal X$, and
$\langle g^k\rangle$ is the Galois group of the projection
$\mathcal X \dashrightarrow \mathbb P^3$ from the point $P$.
\end{thm}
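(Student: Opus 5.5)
The proof reduces, via the normal forms of Section~2, to the diagonal Types~I--VI with $n=3$. The plan is to show that the hypotheses (codimension two, no line in $\mathrm{Fix}(g)$, $\mathrm{ord}(g)=k(d-1)$ with $k\ge2$) leave only a few sub-cases of Type~III. In each of these, $g^k$ will be a homology of order $d-1$ centered at a coordinate point of $\mathcal X$.

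First, $g$ has finite order, so it is diagonalizable, and after rescaling and permuting coordinates it takes one of the normal forms Types~I--VI with $n=3$. Since $d\ge4$ and $n=3$, every automorphism is linear, so this covers all automorphisms. The reduction then runs as follows.
\begin{itemize}
\item \emph{Type~I is excluded}, because there $\mathrm{Fix}(g)$ has codimension one.
\item \emph{Types~IV, V, VI are excluded.} In each, the eigenspace for the eigenvalue $1$ is $W(X_0,X_1,X_2)$, a line in $\mathbb P^4$. The other eigenspaces have dimension at most $1$ in Types~IV--VI, so a component of codimension two (a curve) can only come from this line. Hence the line lies in $\mathcal X$, so $\mathrm{Fix}(g)$ contains a line, contrary to hypothesis.
\item \emph{Type~II is excluded.} By Theorem~\ref{caseii,thm}, $\mathrm{ord}(g)$ divides $d$ or $d-1$, since the divisor $d-2$ with order at least $3$ occurs only for $n=2$. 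The order $d-2$ with $\mathrm{ord}(g)=2$ is too small to equal $k(d-1)\ge 2d-2$. Also $k(d-1)\ge 2d-2>d$ for $d\ge3$, so it divides neither $d$ nor $d-1$.
\end{itemize}
So $g$ is of Type~III, $g=[D(a,b,I_3)]$ with $a\ne b$.

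Next I go through the case analysis in the proof of Theorem~\ref{caseiii,thm} and discard the cases incompatible with $\mathrm{ord}(g)=k(d-1)$.
\begin{itemize}
\item If $\mathcal X\cap\{P_0,P_1\}=\emptyset$, the order divides $d$, which is impossible as above.
\item In the sub-case $i\ne1$, $j\ne0$ of $\mathcal X\cap\{P_0,P_1\}=\{P_0,P_1\}$, the order divides $d-1$, again impossible.
\item In the sub-case $(i,j)=(1,0)$ the order divides $(d-2)d$. Since $\gcd(d-1,(d-2)d)=1$, this forces $d-1\mid 1$, which is impossible for $d\ge4$.
\end{itemize}
The surviving cases are the following.
\begin{itemize}
\item $\mathcal X\cap\{P_0,P_1\}=\{P_1\}$, in both sub-cases $i=0$ and $i\ne0$. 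Here $g^k=[D(1,b^k,I_3)]$.
\item $\mathcal X\cap\{P_0,P_1\}=\{P_0,P_1\}$ with exactly one of $i=1$, $j=0$, where the order divides $(d-1)^2$. Here $g^k=[D(a^k,1,I_3)]$, or the analogous form after swapping $X_0,X_1$.
\end{itemize}
In every surviving case $g^k$ has order exactly $d-1$. It is a homology whose center is a coordinate point $P\in\{P_0,P_1\}$ and whose axis is the complementary coordinate hyperplane, and the proof of Theorem~\ref{caseiii,thm} shows in each case that this center lies on $\mathcal X$.

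Finally, I apply Theorem~\ref{back} with $r=0$, after permuting coordinates so that the center is $P^0$ and the axis is $P^{n}=P^3$. Since $P\in\mathcal X$ and the axis hyperplane is not contained in $\mathcal X$, the projection from $P$ has degree $m=d-1$. Then $g^k$ is represented by $\mathrm{diag}(e_{d-1},I_4)$, so the projection $\mathcal X\dashrightarrow\mathbb P^3$ from $P$ is Galois. Its Galois group has order $d-1$ and contains $\langle g^k\rangle$, so the two coincide, and $P$ is a Galois point.

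The main obstacle is the bookkeeping in this last step. In each surviving sub-case I must check that the eigenvector with the distinct eigenvalue of $g^k$ is the coordinate point lying on $\mathcal X$, and not the other one. For instance, in the case $i=0$ with $P_1\in\mathcal X$, the element $g^k=[D(1,b^k,I_3)]$ has center $P_1$, and $P_1\in\mathcal X$ as required. I also need to confirm that no hidden $t$-relation forces $b^k=1$, which follows from $\mathrm{ord}(g)=k(d-1)$.
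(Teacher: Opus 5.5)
Your proof is correct, but it takes a different route from the paper. The paper gives no independent argument. It says the statement ``follows directly from Theorem~\ref{casev,thm}'': the original proof of Theorem~\ref{r3} in [\ref{bio:th21l}] is taken to be valid except for the overlooked Type~V configuration. Theorem~\ref{casev,thm} shows that for $n=3$ this configuration forces the line $W(X_0,X_1,X_2)$ into $\mathrm{Fix}(g)$, so the new hypothesis removes it.

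You instead rebuild the argument entirely inside the present paper:
\begin{itemize}
\item the no-line hypothesis excludes Types~IV--VI;
\item codimension exactly two excludes Type~I;
\item the order computation of Theorem~\ref{caseii,thm}, together with $k(d-1)\ge 2d-2>d$, excludes Type~II;
\item the sub-case analysis of Theorem~\ref{caseiii,thm}, using $\gcd(d-1,(d-2)d)=1$, leaves only the Type~III sub-cases in which $g^k$ is a homology of order $d-1$ centered at a coordinate point lying on $\mathcal X$.
\end{itemize}
Theorem~\ref{back} then gives the Galois point. Equivalently, $g^k$ preserves the fibres of the projection from its center and $|\langle g^k\rangle|=d-1$ equals the degree of that projection.

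Your route does not depend on the rest of the argument in [\ref{bio:th21l}] being complete. It also yields more information: $g$ must be of Type~III, and the Galois point is the isolated eigenpoint of $g^k$. The paper's route is shorter, but it only makes sense read together with [\ref{bio:th21l}].

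One inaccuracy in your treatment of Types~IV--VI should be fixed. In Type~IV with $n=3$, the $a$-eigenspace $\{X_3=X_4=0\}$ is a plane, so ``the other eigenspaces have dimension at most $1$'' is false there, and a curve component of $\mathrm{Fix}(g)$ could lie in that plane. This causes no gap. The Type~IV normal form already places the chosen codimension-two component in $W(X_0,X_1,X_2)$. A codimension-two component inside the plane would, after rescaling by $a^{-1}$, put you in Type~II, which you exclude separately.

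For Types~V and~VI every eigenspace is at most a line, so any curve component of $\mathrm{Fix}(g)$ is a line, as you need. It need not be $W(X_0,X_1,X_2)$, though: in Type~V it could be the $a$-eigenline spanned by $P_0,P_1$.
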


\subsection{Type VI}
We consider Type VI.
Throughout this subsection, we assume that $a$, $b$, and $c$ are pairwise distinct elements.
After permuting \(X_0, X_1\), and  \(X_2\) if necessary, 
it suffices to consider the following possible configurations of 
\(\mathcal{X} \cap \{P_0, P_1, P_2\}\):
\begin{itemize}
\item \(\mathcal{X} \cap \{P_0, P_1, P_2\} = \emptyset\);
\item \(\mathcal{X} \cap \{P_0, P_1, P_2\} = \{P_2\}\);
\item \(\mathcal{X} \cap \{P_0, P_1, P_2\} = \{P_1, P_2\}\);
\item \(\mathcal{X} \cap \{P_0, P_1, P_2\} = \{P_0, P_1, P_2\}\).
\end{itemize}
\begin{lem}\label{casevi,1}
Let $\mathcal{X}\subset \mathbb{P}^{n+1}$ be a smooth hypersurface of degree $d$.
We assume that $\mathcal{X}$ admits an automorphism $g:=[D(a,b,c,I_{n-1})]$ and $W(X_0,X_1,X_2)\subset \mathcal X$ where $a,b,c\in k^{\times}\backslash\{1\}$ are pairwise distinct. 
Then $n=2$, and one of the following holds:
\begin{enumerate}[$(i)$]
\item \( t = a \) and 
\[\sum_{u=0,1} F_{u\,_{\underline{0,1,2}}}(X_0,X_1,X_2,X_{3})= F_{1,0,0}(X_3)X_0.
\]
\item \( t = b \) and
\[
\sum_{u=0,1} F_{u\,_{\underline{0,1,2}}}(X_0,X_1,X_2,X_{3})= F_{0,1,0}(X_3)X_1 .
\]
\item \( t = c \) and 
\[\sum_{u=0,1} F_{u\,_{\underline{0,1,2}}}(X_0,X_1,X_2,X_{3})= F_{0,0,1}(X_3)X_2.
\]
\end{enumerate}
\end{lem}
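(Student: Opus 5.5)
We argue exactly as in the proofs of Lemma~\ref{caseii,lem} and Lemma~\ref{caseiv,lem1}, looking only at the part of $F$ of degree at most one in $X_0,X_1,X_2$. Restricting \eqref{casesiv,v,vi,u} to this part gives
\[
F_{0,0,0}+aF_{1,0,0}X_0+bF_{0,1,0}X_1+cF_{0,0,1}X_2
= t\bigl(F_{0,0,0}+F_{1,0,0}X_0+F_{0,1,0}X_1+F_{0,0,1}X_2\bigr).
\]
Since $W(X_0,X_1,X_2)\subset\mathcal X$, we have $F_{0,0,0}=0$. By Lemma~\ref{casesiv,v,vi,lem1}, at least one of $F_{1,0,0},F_{0,1,0},F_{0,0,1}$ is nonzero. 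If $F_{1,0,0}\neq0$, comparing coefficients gives $t=a$. Because $a,b,c$ are pairwise distinct, this forces $F_{0,1,0}=F_{0,0,1}=0$. The other two cases are symmetric and give $t=b$ or $t=c$. This yields the three alternatives $(i)$--$(iii)$, each with exactly one surviving linear term.

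It remains to show $n=2$; this is the only step with geometric content. Suppose we are in case $(i)$; the others are identical after permuting $X_0,X_1,X_2$. Then
\[
F=F_{1,0,0}(X_3,\ldots,X_{n+1})X_0+(\text{terms of degree}\geq2 \text{ in } X_0,X_1,X_2).
\]
All partial derivatives of $F$ vanish on $W(X_0,X_1,X_2)$ except possibly $\partial F/\partial X_0=F_{1,0,0}$. The subspace $W(X_0,X_1,X_2)\cong\mathbb P^{n-2}$ has coordinates $X_3,\ldots,X_{n+1}$, and $F_{1,0,0}$ is a form of degree $d-1\geq1$ in them. If $n\geq3$, then $n-2\geq1$, so $\{F_{1,0,0}=0\}\cap W(X_0,X_1,X_2)$ is nonempty. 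Every point of this intersection is a singular point of $\mathcal X$, contradicting smoothness. Hence $n=2$, and in that case $F_{1,0,0}=F_{1,0,0}(X_3)$ is a nonzero multiple of $X_3^{d-1}$, which is consistent with the statement.

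The main point to check is the dimension count: a single hypersurface in $\mathbb P^{n-2}$ is nonempty as soon as $n-2\geq1$. This is why Type~VI forces $n=2$, whereas Type~V only forces $n\leq3$ (two linear terms survive) and Type~IV only $n\leq4$ (three survive). We also need $d\geq2$ so that $F_{1,0,0}$ has positive degree, which holds throughout the paper.
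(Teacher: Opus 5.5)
Your proposal is correct and is essentially the argument the paper has in mind when it says the lemma follows ``in the same manner as Lemma~\ref{caseiv,lem1}''. Comparing the part of $F$ of degree at most one in $X_0,X_1,X_2$ gives $F_{0,0,0}=0$ and exactly one surviving linear term, with $t$ equal to the corresponding eigenvalue because $a,b,c$ are pairwise distinct. If $n\geq3$, the zero set of that single coefficient on $W(X_0,X_1,X_2)\cong\mathbb{P}^{n-2}$ is nonempty and consists of singular points of $\mathcal X$, which forces $n=2$.
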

\begin{proof}
This lemma can be proved in the same manner as Lemma~\ref{caseiv,lem1}.
We omit the proof.
\end{proof}
\begin{thm}\label{casevi,thm}
Let $\mathcal{X}\subset \mathbb{P}^{n+1}$ be a smooth hypersurface of degree $d$.
We assume that $\mathcal{X}$ admits an automorphism $g:=[D(a,b,c,I_{n-1})]$ and $W(X_0,X_1,X_2)\subset \mathcal X$ where $a,b,c\in k^{\times}\setminus\{1\}$ are pairwise distinct.
Then the order of $g$ divides one of the integers
\[
(d-1)^2d,\quad (d-1)^3,\quad (d^2-3d+3)d,\quad (d^2-3d+3)(d-1),
\]
\[
(d-2)(d-1)d,\quad \mr{or}\quad (d-2)(d-1)^2.
\]
\end{thm}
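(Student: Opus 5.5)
By Lemma~\ref{casevi,1} we have $n=2$, so $\mathcal X\subset\mathbb P^3$ and $g=[D(a,b,c,1)]$. Lemma~\ref{casevi,1} gives three cases for the scalar $t$. Since permuting $X_0,X_1,X_2$ permutes $a,b,c$ while keeping them pairwise distinct, I will work with a generic value $t\in\{a,b,c\}$ and read off the constraint in each case separately. I will follow the same case split as in Theorem~\ref{casev,thm}, organised by the configuration $\mathcal X\cap\{P_0,P_1,P_2\}$ (the four configurations listed before Lemma~\ref{casevi,1}). For each coordinate point $P_s$ with $s\in\{0,1,2\}$ there are two possibilities:
\begin{itemize}
\item if $P_s\notin\mathcal X$, the monomial $X_s^d$ occurs in $F$, which yields the relation $e_s^d=t$;
\item if $P_s\in\mathcal X$, some monomial $X_{i_s}X_s^{d-1}$ with $i_s\neq s$ occurs, which yields $e_{i_s}e_s^{d-1}=t$.
\end{itemize}
Here $(e_0,e_1,e_2,e_3)=(a,b,c,1)$.

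The key steps are as follows. First, I record the three relations $R_0,R_1,R_2$, one for each of $P_0,P_1,P_2$, together with $t\in\{a,b,c\}$. Second, I discard the choices of $i_s$ that force two of $a,b,c,1$ to coincide, exactly as in the proofs of Theorems~\ref{caseiii,thm} and~\ref{casev,thm}. For instance, if $i_s=s'$ and $P_{s'}\notin\mathcal X$, then $e_{s'}e_s^{d-1}=e_{s'}^d$, which forces a coincidence unless the cycle structure allows it. Third, in each surviving case I eliminate variables. Typically one eigenvalue, say $c$, generates: the relation $t=c$ gives $c^{d}=c$ or $c^{d-1}=\ldots$, and substituting, for example $b=c^{1-d}$ or $a=b^{1-d}$, expresses everything as a power of one root of unity. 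This yields its order as a product of one factor from each ``link'' in the chain:
\begin{itemize}
\item a factor $d$ when a point lies off $\mathcal X$;
\item a factor $d-1$ for a link $X_{i}X_s^{d-1}$ pointing to $X_3$ or to $t$;
\item a factor $d^2-3d+3$ or $(d-2)$ for a 2-cycle $X_0^{d-1}X_1,\ X_1^{d-1}X_0$, mirroring the $(d-2)d$ and $d^2-3d+3$ computations of Theorem~\ref{casev,thm}.
\end{itemize}
Since $g$ has three distinct eigenvalues besides $1$, one expects a chain of length three, which produces the degree-three products in the list: $(d-1)^2d$, $(d-1)^3$, $(d^2-3d+3)d$, $(d^2-3d+3)(d-1)$, $(d-2)(d-1)d$, $(d-2)(d-1)^2$. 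This matches the plane curve list of Theorem~\ref{bb1} multiplied by one further factor $d$ or $d-1$.

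The main obstacle is the bookkeeping. Across the four configurations, the choices of $i_s\in\{0,1,2,3\}\setminus\{s\}$, and the three values of $t$, there are many subcases, and I must check that each surviving one lands in a divisor of one of the six integers. A further problem is that 3-cycles $X_0\to X_1\to X_2\to X_0$ would give order $(d-1)^3+1=d(d^2-3d+3)$. That is consistent with the list, but I have to verify this carefully.

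To keep the case analysis manageable, I would first note that $\operatorname{ord}(g)=\operatorname{lcm}(\operatorname{ord}a,\operatorname{ord}b,\operatorname{ord}c)$. Writing the relations as a linear system for exponents modulo the order of a generator, the order then divides the absolute value of a $3\times3$ determinant of the form $\det(M)$. Here $M$ has rows $(d,0,0)$ or $(d-1,1,0)$-type patterns, adjusted by $t$. Computing these determinants for each admissible pattern gives exactly the six values, which is faster than substituting case by case.
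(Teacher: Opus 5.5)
Your reduction is the same as the paper's. Lemma~\ref{casevi,1} gives $n=2$ and $t\in\{a,b,c\}$, and each $P_s$ contributes $e_s^d=t$ or $e_{i_s}e_s^{d-1}=t$.

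The gap is in the determinant step you propose in place of the paper's explicit case-by-case elimination. If $M$ is the integer matrix of exponent relations, then $|\det M|$ is the \emph{order} of the abstract group $\mathbb Z^3/\mathrm{rowspace}(M)$. But $\operatorname{ord}(g)=\operatorname{lcm}(\operatorname{ord}a,\operatorname{ord}b,\operatorname{ord}c)$ only divides its \emph{exponent*}. These differ whenever that group is not cyclic, and it is false that the determinants produce the six listed values.

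Already for $\mathcal X\cap\{P_0,P_1,P_2\}=\emptyset$ with $t=a$, the relations $a^{d-1}=1$, $b^d=a$, $c^d=a$ give rows $(d-1,0,0)$, $(-1,d,0)$, $(-1,0,d)$. Then $|\det M|=d^2(d-1)$, which divides none of the six integers. The group is $\mathbb Z/d(d-1)\times\mathbb Z/d$, and the correct bound, which the paper obtains, is $(d-1)d$. Two more failures occur for $d\ge4$. The configuration $\{P_2\}$ with monomial $X_3X_2^{d-1}$ and $t=c$ gives $d^2(d-2)$. The configuration $\{P_0,P_1,P_2\}$ with monomials $X_1X_0^{d-1}$, $X_0X_1^{d-1}$, $X_3X_2^{d-1}$ and $t=c$ gives $d(d-2)^2$. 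The true bound is $(d-2)d$ in both. So your final step does not prove the statement.

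To repair it, replace $|\det M|$ by the largest invariant factor $\Delta_3(M)/\Delta_2(M)$, i.e.\ the determinant divided by the gcd of the $2\times2$ minors. Alternatively, do what the paper does: solve each system explicitly, either expressing the eigenvalues as powers of one of them or bounding their orders separately, then take the lcm. Either way you still have to carry out the enumeration over configurations, choices of $i,j,k$, and values of $t$. The proposal only asserts its outcome.

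Some of your heuristics are also off:
\begin{itemize}
\item Because $t\neq1$, the $3$-cycle $X_1X_0^{d-1},X_2X_1^{d-1},X_0X_2^{d-1}$ with $t=a$ gives $(d-1)(d^2-3d+3)$, not $(d-1)^3+1$.
\item The list is not the plane-curve list multiplied by $d$ or $d-1$. That would produce $(d-1)d^2$ and $(d-2)d^2$, which are the spurious values above, and would miss $(d-2)(d-1)^2$.
\item The relation $e_{s'}e_s^{d-1}=e_{s'}^d$ only gives $e_s^{d-1}=e_{s'}^{d-1}$, not a coincidence of eigenvalues. So your discarding criterion does not by itself remove cases in Type~VI. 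This is harmless, since discarding is only an optimization.
\end{itemize}
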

\begin{proof}
First, we assume that \(\mathcal{X} \cap \{P_0, P_1, P_2\} = \emptyset\).
Then $F(X_0,\ldots, X_{n+1})$ contains the monomials $X_0^{d}$, $X_1^{d}$, and $X_2^{d}$. 
By the equation $\eqref{casesiv,v,vi,u}$, we have 
\[
t=a^{d}=b^{d}=c^{d}.
\]
By Lemma~\ref{casevi,1},
\[
a^{d}=b^{d}=c^{d}=
\left\{
\begin{aligned}
&a\quad(\mr{case}~(i)\ \mr{of\ Lemma}~\ref{casevi,1}),\\
&b\quad(\mr{case}~(ii)\ \mr{of\ Lemma}~\ref{casevi,1}),\\
&c\quad(\mr{case}~(iii)\ \mr{of\ Lemma}~\ref{casevi,1}).
\end{aligned}
\right.
\]
In case $(i)$, we have $a^{d-1}=1$, and $a=b^{d}=c^{d}$.
By substituting $a=b^{d}$ (resp. $a=c^{d}$) into \(a^{d-1}=1\), we get $b^{(d-1)d}=1$ (resp. $c^{(d-1)d}=1$).
Since
\[
a=b^d=c^d,\quad \mr{and}\quad b^{(d-1)d}=c^{(d-1)d}=1,
\]
$\mr{ord}(g)\ \mr{divides}\ (d-1)d$.
Cases~$(ii)$ and~$(iii)$ are reduced to case $(i)$
by exchanging $X_0$, $X_1$, and $X_2$.
Thus, $\mr{ord}(g)\ \mr{divides}\ (d-1)d$.
\\

Second, we assume that \(\mathcal{X} \cap \{P_0, P_1, P_2\} = \{P_2\}\).
Then $F(X_0,\ldots, X_{n+1})$ contains the monomials $X_0^{d}$, $X_1^{d}$, and $X_iX_2^{d-1}$ for some $i\neq2$. 

We assume that $i=0$.
By the equation $\eqref{casesiv,v,vi,u}$, we have 
\[
t=a^{d}=b^{d}=ac^{d-1}.
\]
By Lemma~\ref{casevi,1},
\[
a^{d}=b^{d}=ac^{d-1}=
\left\{
\begin{aligned}
&a\quad(\mr{case}~(i)\ \mr{of\ Lemma}~\ref{casevi,1}),\\
&b\quad(\mr{case}~(ii)\ \mr{of\ Lemma}~\ref{casevi,1}),\\
&c\quad(\mr{case}~(iii)\ \mr{of\ Lemma}~\ref{casevi,1}).
\end{aligned}
\right.
\]
In case $(i)$, we have $a^{d-1}=c^{d-1}=1$ and $b^{d}=a$.
By substituting $a=b^{d}$ into \(a^{d-1}=1\), we get $b^{(d-1)d}=1$.
Since
\[a=b^d,\quad b^{(d-1)d}=1,\quad \mr{and}\quad c^{d-1}=1,\]
$\mr{ord}(g)\ \mr{divides}\ (d-1)d$.
Case~$(ii)$ is reduced to case~$(i)$ by interchanging $X_0$ and~$X_1$.
Hence, $\mr{ord}(g)$ divides $(d-1)d$.
In case $(iii)$, we get $a^{d}=b^{d}=c$ and $a=c^{2-d}$.
By substituting \(a=c^{2-d}\) into \(a^{d}=c\), we obtain $c^{(d-1)^2}=1$.
By substituting \(a^d=c\) into \(a=c^{2-d}\), we obtain $a^{(d-1)^2}=1$.
Since 
\[a=b^{(2-d)d},\quad
b^{(d-1)^2d}=1,\quad\mr{and}\quad
c=b^{d},\]
$\mr{ord}(g)\ \mr{divides}\ (d-1)^2d$.

The case $i=1$ is reduced to the case $i=0$ by interchanging $X_0$ and $X_1$.
Hence, $\mr{ord}(g)$ divides $(d-1)^2d$.

We assume that $i\not\in\{0,1\}$.
By the equation $\eqref{casesiv,v,vi,u}$, we have 
\[
t=a^{d}=b^{d}=c^{d-1}.
\]
By Lemma~\ref{casevi,1},
\[
a^{d}=b^{d}=c^{d-1}=
\left\{
\begin{aligned}
&a\quad(\mr{case}~(i)\ \mr{of\ Lemma}~\ref{casevi,1}),\\
&b\quad(\mr{case}~(ii)\ \mr{of\ Lemma}~\ref{casevi,1}),\\
&c\quad(\mr{case}~(iii)\ \mr{of\ Lemma}~\ref{casevi,1}).
\end{aligned}
\right.
\]
In case $(i)$, we have $a^{d-1}=1$ and $b^{d}=c^{d-1}=a$.
By substituting $a=b^d$ into $a^{d-1}=1$, we get $b^{(d-1)d}=1$.
By substituting $a=c^{d-1}$ into $a^{d-1}=1$, we get $c^{(d-1)^2}=1$. 
Since
\[
a=b^d=c^{d-1}\quad \mr{and}\quad b^{(d-1)d}=c^{(d-1)^2}=1,
\]
$\mr{ord}(g)\ \mr{divides}\ (d-1)^2d$.
Case~$(ii)$ is reduced to case~$(i)$ by interchanging $X_0$ and~$X_1$.
Hence, $\mr{ord}(g)$ divides $(d-1)^2d$.
In case $(iii)$, we have 
$a^{d}=b^{d}=c$ and $c^{d-2}=1$.
Since
\[
a^{(d-2)d}=b^{(d-2)d}=1\quad \mr{and}\quad a^d=b^d=c,
\]
$\mr{ord}(g)\ \mr{divides}\ (d-2)d$.
\\

Third, we assume that \(\mathcal{X} \cap \{P_0, P_1, P_2\} = \{P_1, P_2\}\).
Then $F(X_0,\ldots, X_{n+1})$ contains the monomials $X_0^{d}$, $X_iX_1^{d-1}$, and $X_jX_2^{d-1}$ for some $i\neq1$ and some $j\neq2$. 

We assume that $i=0$ and $j=0$. 
By the equation $\eqref{casesiv,v,vi,u}$, we have 
\[
t=a^{d}=ab^{d-1}=ac^{d-1}.
\]
By Lemma~\ref{casevi,1},
\[
a^{d}=ab^{d-1}=ac^{d-1}=
\left\{
\begin{aligned}
&a\quad(\mr{case}~(i)\ \mr{of\ Lemma}~\ref{casevi,1}),\\
&b\quad(\mr{case}~(ii)\ \mr{of\ Lemma}~\ref{casevi,1}),\\
&c\quad(\mr{case}~(iii)\ \mr{of\ Lemma}~\ref{casevi,1}).
\end{aligned}
\right.
\]
In case $(i)$, we have 
\[
a^{d-1}=b^{d-1}=c^{d-1}=1.
\]
Thus, $\mr{ord}(g)\ \mr{divides}\ d-1$.
In case $(ii)$, we have $a^{d}=ac^{d-1}=b$ and $ab^{d-2}=1$.
Since
\[
a=b^{2-d},\quad b^{d-1}=c^{d-1},\quad \mr{and}\quad b^{(d-1)^2}=1,
\]
$\mr{ord}(g)\ \mr{divides}\ (d-1)^2$.
Case~$(iii)$ is reduced to case~$(ii)$ by interchanging $X_1$ and~$X_2$.
Hence, $\mr{ord}(g)$ divides $(d-1)^2$.

We assume that $i=0$ and $j=1$. 
By the equation $\eqref{casesiv,v,vi,u}$, we have 
\[
t=a^{d}=ab^{d-1}=bc^{d-1}.
\]
By Lemma~\ref{casevi,1},
\[
a^{d}=ab^{d-1}=bc^{d-1}=
\left\{
\begin{aligned}
&a\quad(\mr{case}~(i)\ \mr{of\ Lemma}~\ref{casevi,1}),\\
&b\quad(\mr{case}~(ii)\ \mr{of\ Lemma}~\ref{casevi,1}),\\
&c\quad(\mr{case}~(iii)\ \mr{of\ Lemma}~\ref{casevi,1}).
\end{aligned}
\right.
\]
In case $(i)$, we have $a^{d-1}=b^{d-1}=1$ and $c^{d-1}=a$.
By substituting $c^{d-1}=a$ into $a^{d-1}=1$, we get $c^{(d-1)^2}=1$.
Since
\[
a^{d-1}=b^{d-1}=c^{(d-1)^2}=1\quad \mr{and}\quad c^{d-1}=a,
\]
$\mr{ord}(g)\ \mr{divides}\ (d-1)^2$.
In case $(ii)$, we obtain $a^d=b$ and $ab^{d-2}=c^{d-1}=1$.
By substituting $a^d=b$ into $ab^{d-2}=1$, we get $a^{(d-1)^2}=1$.
Since
\[
a^d=b\quad\mr{and}\quad a^{(d-1)^2}=c^{d-1}=1,
\]
$\mr{ord}(g)\ \mr{divides}\ (d-1)^2$.
In case $(iii)$,
we have $a^{d}=ab^{d-1}=c$ and $bc^{d-2}=1$.
By substituting \(a^{d}=c\) into \(bc^{d-2}=1\), we obtain $b = a^{-(d-2)d}$.
By substituting \(a^{d}=c\) and \(b=a^{-(d-2)d}\) into \(ab^{d-1}=c\), we obtain $a^{(d-1)^3}=1$. Since
\[
a^{(d-1)^3}=1,\quad b=a^{-(d-2)d},\quad \mr{and}\quad c=a^d,
\]
$\mr{ord}(g)\ \mr{divides}\ (d-1)^3$.

We assume that $i=0$ and $j\not\in\{0,1,\}$. 
By the equation $\eqref{casesiv,v,vi,u}$, we have 
\[
t=a^{d}=ab^{d-1}=c^{d-1}.
\]
By Lemma~\ref{casevi,1},
\[
a^{d}=ab^{d-1}=c^{d-1}=
\left\{
\begin{aligned}
&a\quad(\mr{case}~(i)\ \mr{of\ Lemma}~\ref{casevi,1}),\\
&b\quad(\mr{case}~(ii)\ \mr{of\ Lemma}~\ref{casevi,1}),\\
&c\quad(\mr{case}~(iii)\ \mr{of\ Lemma}~\ref{casevi,1}).
\end{aligned}
\right.
\]
In case $(i)$, we have
$a^{d-1}=b^{d-1}=1\quad\mr{and}\quad c^{d-1}=a$.
By substituting $c^{d-1}=a$ into $a^{d-1}=1$,
we get $c^{(d-1)^2}=1$.
Since
\[
a=c^{d-1}\quad \mr{and}\quad c^{(d-1)^2}=b^{d-1}=1,
\]
$\mr{ord}(g)\ \mr{divides}\ (d-1)^2$.
In case $(ii)$, we obtain
$a^d=c^{d-1}=b$ and $ab^{d-2}=1$.
By substituting \(b=c^{d-1}\) into \(ab^{d-2}=1\), we have $a= c^{-(d-2)(d-1)}$.
By substituting \(a=c^{-(d-2)(d-1)}\) and \(b=c^{d-1}\) into
\(a^{d}=b\), we obtain $c^{(d-1)^3}=1$.
Since
\[
a=c^{-(d-2)(d-1)},\quad b=c^{d-1},\quad\mr{and} \quad c^{(d-1)^3}=1,
\]
$\mr{ord}(g)\ \mr{divides}\ (d-1)^3$.
In case $(iii)$,
we have $a^{d}=ab^{d-1}=c$ and $c^{d-2}=1$.
By substituting \(a^{d}=c\) into \((ab^{d-1})^d=c^d\), we get $cb^{(d-1)d}=c^d$.
By $c^{d-2}=1$, we have $c=b^{(d-1)d}$.
By substituting \(c=b^{(d-1)d}\)  into \(ab^{d-1}=c\), we obtain $a=b^{(d-1)^2}$.
Moreover, by substituting \(c=b^{(d-1)d}\) into $c^{d-2}=1$, we get $b^{(d-2)(d-1)d}=1$.
Since
\[
a=b^{(d-1)^2},\quad c=b^{(d-1)d},\quad\mr{and}\quad b^{(d-2)(d-1)d}=1,\ 
\]
$\mr{ord}(g)\ \mr{divides}\ (d-2)(d-1)d$.

The case $(i,j)=(2,0)$ is reduced to the case $(i,j)=(0,1)$ by interchanging $X_1$ and $X_2$.
Hence, $\mr{ord}(g)$ divides $(d-1)^2d$ or $(d-1)^3$.

We assume that $i=2$ and $j=1$. 
By the equation $\eqref{casesiv,v,vi,u}$, we have 
\[
t=a^{d}=cb^{d-1}=bc^{d-1}.
\]
By Lemma~\ref{casevi,1},
\[
a^{d}=cb^{d-1}=bc^{d-1}=
\left\{
\begin{aligned}
&a\quad(\mr{case}~(i)\ \mr{of\ Lemma}~\ref{casevi,1}),\\
&b\quad(\mr{case}~(ii)\ \mr{of\ Lemma}~\ref{casevi,1}),\\
&c\quad(\mr{case}~(iii)\ \mr{of\ Lemma}~\ref{casevi,1}).
\end{aligned}
\right.
\]
In case $(i)$, we have 
$a^{d-1}=1$ and $cb^{d-1}=bc^{d-1}=a$.
By raising the equalities \(cb^{d-1}=a\) and \(bc^{d-1}=a\) to the \((d-1)\)-st power and using \(a^{d-1}=1\) yields
\[
c^{d-1} b^{(d-1)^{2}} = c^{(d-1)^{2}} b^{d-1} = 1.
\]
Hence
\[
b^{(d-1)^{3}} = c^{(d-1)^{3}} = 1.
\]
On the other hand, from \(cb^{d-1}=bc^{d-1}\) we obtain \(b^{d-2}=c^{d-2}\). 
Since \(\gcd(d-2,(d-1)^{3})=1\), there exist integers \(u,v\) such that
\[
u(d-2) + v(d-1)^{3} = 1.
\]
Therefore, using \(b^{\,(d-1)^{3}}=c^{\,(d-1)^{3}}=1\) and \(b^{\,d-2}=c^{\,d-2}\), we deduce
\[
b = c.
\]
This contradicts the assumption \(b \ne c\). Hence case~$(i)$ cannot occur.
In case $(ii)$, we obtain
$a^d=b$ and $cb^{d-2}=c^{d-1}=1$.
By substituting $c=b^{2-d}$ into $c^{d-1}=1$, we have $b^{(d-1)(d-2)}=1$.
By substituting $b=a^{d}$ into $b^{(d-1)(d-2)}=1$, we get $a^{d(d-1)(d-2)}=1$.
Since
\[
a^{(d-2)(d-1)d}=1,\quad b=a^{d},\quad \mr{and}\quad c=a^{-(d-2)d},
\]
$\mr{ord}(g)\ \mr{divides}\ d(d-1)(d-2)$.
Case~$(iii)$ is reduced to case~$(ii)$ by interchanging \(X_{1}\) and \(X_{2}\).
Hence, $\mr{ord}(g)$ divides $d(d-1)(d-2)$.

We assume that $i=2$ and $j\not\in\{0,1\}$. 
By the equation $\eqref{casesiv,v,vi,u}$, we have 
\[
t=a^{d}=cb^{d-1}=c^{d-1}.
\]
By Lemma~\ref{casevi,1},
\[
a^{d}=cb^{d-1}=c^{d-1}=
\left\{
\begin{aligned}
&a\quad(\mr{case}~(i)\ \mr{of\ Lemma}~\ref{casevi,1}),\\
&b\quad(\mr{case}~(ii)\ \mr{of\ Lemma}~\ref{casevi,1}),\\
&c\quad(\mr{case}~(iii)\ \mr{of\ Lemma}~\ref{casevi,1}).
\end{aligned}
\right.
\]
In case $(i)$, we have 
$a^{d-1}=1$ and $cb^{d-1}=c^{d-1}=a$.
By substituting $c^{d-1}=a$ into $a^{d-1}=1$, we get $c^{(d-1)^2}=1$.
Since $a^{d-1}=c^{(d-1)^2}=1$ and $cb^{d-1}=a$, we obtain $b^{(d-1)^3}=1$. Since
\[
a=b^{-(d-1)^2},\quad b^{(d-1)^3}=1,\quad \mr{and}\quad c=b^{-(d-1)d},
\]
$\mr{ord}(g)\ \mr{divides}\ (d-1)^3$.
In case $(ii)$, we obtain
$a^d=c^{d-1}=b$ and $cb^{d-2}=1$.
By substituting \(a^{d}=b\) into \(cb^{d-2}=1\), we get $c=a^{-(d-2)d}$.
By substituting \(b=a^{d}\) and \(c=a^{-(d-2)d}\) into \(c^{d-1}=b\), we obtain $a^{d(d^2-3d+3)}=1$.
Since
\[
a^{d(d^2-3d+3)}=1,\quad b=a^{d},\quad \mr{and}\quad c=a^{-(d-2)d},
\]
$\mr{ord}(g)\ \mr{divides}\ d(d^2-3d+3)$.
In case $(iii)$,
we have $a^{d}=c$ and $b^{d-1}=c^{d-2}=1$.
By substituting $a^d=c$ into $c^{d-2}=1$, we obtain $a^{d(d-2)}=1$.
Since
\[
a^{(d-2)d}=b^{d-1}=1\quad \mr{and}\quad c=a^{d},
\]
$\mr{ord}(g)\ \mr{divides}\ (d-2)(d-1)d$.

The case $i\not\in\{0,2\}$ and $j=0$ is reduced to the case $i=0$ and $j\not\in\{0,1\}$ by interchanging $X_1$ and $X_2$.
Hence, $\mr{ord}(g)$ divides $(d-1)^3$ or $(d-2)(d-1)d$.

The case $i\not\in\{0,2\}$ and $j=1$ is reduced to the case $i=2$ and $j\not\in\{0,1\}$ by interchanging $X_1$ and $X_2$.
Hence, $\mr{ord}(g)$ divides $(d-1)^3$, $(d^2-3d+3)d$, or $(d-2)(d-1)d$.

We assume that $i\not\in\{0,2\}$ and $j\not\in\{0,1\}$. 
By the equation $\eqref{casesiv,v,vi,u}$, we have 
\[
t=a^{d}=b^{d-1}=c^{d-1}.
\]
By Lemma~\ref{casevi,1},
\[
a^{d}=b^{d-1}=c^{d-1}=
\left\{
\begin{aligned}
&a\quad(\mr{case}~(i)\ \mr{of\ Lemma}~\ref{casevi,1}),\\
&b\quad(\mr{case}~(ii)\ \mr{of\ Lemma}~\ref{casevi,1}),\\
&c\quad(\mr{case}~(iii)\ \mr{of\ Lemma}~\ref{casevi,1}).
\end{aligned}
\right.
\]
In case $(i)$, we have $a^{d-1}=1$ and $b^{d-1}=c^{d-1}=a$.
By substituting $b^{d-1}=a$ (resp. $c^{d-1}=a$) into $a^{d-1}=1$,
we have $b^{(d-1)^2}=1$ (resp. $c^{(d-1)^2}=1$).
Since
\[
a=b^{d-1}=c^{d-1}\quad \mr{and}\quad b^{(d-1)^2}=c^{(d-1)^2}=1,
\]
$\mr{ord}(g)\ \mr{divides}\ (d-1)^2$.
In case $(ii)$, we obtain
$a^d=c^{d-1}=b$ and $b^{d-2}=1$.
By substituting $a^{d}=b$ (resp. $c^{d-1}=b$) into $b^{d-2}=1$,
we have $a^{d(d-2)}=1$ (resp. $c^{(d-1)(d-2)}=1$).
Since
\[
a^{(d-2)d}=c^{(d-1)(d-2)}=1\quad \mr{and}\quad b=a^{d}=c^{d-1},
\]
$\mr{ord}(g)\ \mr{divides}\ (d-2)(d-1)d$.
Case~$(iii)$ is reduced to case~$(ii)$ of the same lemma by interchanging \(X_{1}\) and \(X_{2}\).
Hence, $\mr{ord}(g)$ divides $(d-2)(d-1)d$.
\\

Finally, we assume that \(\mathcal{X} \cap \{P_{0},P_{1},P_{2}\}=\{P_{0},P_{1},P_{2}\}\).
Then \(F(X_{0},\ldots,X_{n+1})\) contains the monomials
\(X_{i}X_{0}^{d-1}\), \(X_{j}X_{1}^{d-1}\), and \(X_{k}X_{2}^{d-1}\)
for some \(i\neq 0\), \(j\neq 1\), and \(k\neq 2\).
If necessary, by permuting \(X_{0},X_{1},X_{2}\), 
we may divide the discussion into the following two cases:
(I) \(X_{i}=X_{1}\) or (II) \(i\notin\{1,2\}\), \(j\notin\{0,2\}\), and \(k\notin\{0,1\}\).

We assume that $i=1$.
We assume that $j=k=0$.
By the equation $\eqref{casesiv,v,vi,u}$, we have 
\[
t=ba^{d-1}=ab^{d-1}=ac^{d-1}.
\]
By Lemma~\ref{casevi,1},
\[
ba^{d-1}=ab^{d-1}=ac^{d-1}=
\left\{
\begin{aligned}
&a\quad(\mr{case}~(i)\ \mr{of\ Lemma}~\ref{casevi,1}),\\
&b\quad(\mr{case}~(ii)\ \mr{of\ Lemma}~\ref{casevi,1}),\\
&c\quad(\mr{case}~(iii)\ \mr{of\ Lemma}~\ref{casevi,1}).
\end{aligned}
\right.
\]
In case $(i)$, we have $ba^{d-2}=b^{d-1}=c^{d-1}=1$.
By substituting \(ba^{d-2}=1\) into \(b^{d-1}=1\), we get $a^{(d-2)(d-1)}=1$.
Since
\[
a^{(d-2)(d-1)}=c^{d-1}=1\quad \mr{and}\quad b=a^{2-d},
\]
$\mr{ord}(g)\ \mr{divides}\ (d-2)(d-1)$.
In case $(ii)$, we obtain
$a^{d-1}=ab^{d-2}=1$ and $ac^{d-1}=b$.
By substituting \(ab^{d-2}=1\) into \(a^{d-1}=1\), we get $b^{(d-2)(d-1)}=1$.
By substituting \(ab^{d-2}=1\) into \(ac^{d-1}=b\), we get $b^{d-1}=c^{d-1}$.
Since
\[
a=b^{2-d},\quad b^{(d-2)(d-1)}=1,\quad \mr{and}\quad b^{d-1}=c^{d-1},
\]
$\mr{ord}(g)\ \mr{divides}\ (d-2)(d-1)$.
In case $(iii)$, we obtain
$ba^{d-1}=ab^{d-1}=c$ and $ac^{d-2}=1$.
By substituting \(ac^{d-2}=1\) into \(ba^{d-1}=c\), we get $b=c^{d^2-3d+3}$.
By substituting \(ac^{d-2}=1\) and \(b=c^{d^2-3d+3}\) into $ab^{d-1}=c$, we get $c^{(d-2)(d-1)^2}=1$.
Then 
\[
a=c^{2-d},\quad b=c^{d^2-3d+3},\quad c^{(d-2)(d-1)^2}=1,
\]
$\mr{ord}(g)\ \mr{divides}\ (d-2)(d-1)^2$.

The case $j=0$ and $k=1$ is reduced to the case $j=0$ and $k=0$ by interchanging $X_0$ and $X_1$.
Hence, $\mr{ord}(g)$ divides $(d-2)(d-1)^2$.

We assume that $j=0$ and $k\not\in\{0,1\}$.
By the equation $\eqref{casesiv,v,vi,u}$, we have 
\[
t=ba^{d-1}=ab^{d-1}=c^{d-1}.
\]
By Lemma~\ref{casevi,1},
\[
ba^{d-1}=ab^{d-1}=c^{d-1}=
\left\{
\begin{aligned}
&a\quad(\mr{case}~(i)\ \mr{of\ Lemma}~\ref{casevi,1}),\\
&b\quad(\mr{case}~(ii)\ \mr{of\ Lemma}~\ref{casevi,1}),\\
&c\quad(\mr{case}~(iii)\ \mr{of\ Lemma}~\ref{casevi,1}).
\end{aligned}
\right.
\]
In case $(i)$, we have 
$ba^{d-2}=b^{d-1}=1$ and $c^{d-1}=a$.
By substituting \(c^{d-1}=a\) into \(ba^{d-2}=1\), we get $b=c^{-(d-2)(d-1)}$.
By substituting \(b=c^{-(d-2)(d-1)}\) into \(b^{d-1}=1\), we get $c^{(d-2)(d-1)^2}=1$.
Since 
\[
a=c^{d-1},\quad b=c^{-(d-2)(d-1)},\quad  \mr{and}\quad c^{(d-2)(d-1)^2}=1,
\]
$\mr{ord}(g)\ \mr{divides}\ (d-2)(d-1)^2$.
Case~$(ii)$ is reduced to case~$(i)$ by interchanging \(X_{0}\) and \(X_{1}\).
Hence, $\mr{ord}(g)$ divides $(d-2)(d-1)^2$.
In case $(iii)$, we obtain
$ba^{d-1}=ab^{d-1}=c$ and $c^{d-2}=1$.
Since $ba^{d-1}=ab^{d-1}$, we have $a^{d-2}=b^{d-2}$.
By substituting $ba^{d-1}=c$, i.e. $b=a^{1-d}c$ into $ab^{d-1}=c$, we obtain $a^{-(d-2)d}c^{d-1}=c$.
Since $c^{d-2}=1$, we get $a^{(d-2)d}=1$.
Since $a^{d-2}=b^{d-2}$, we get $b^{(d-2)d}=1$.
Since
\[
a^{(d-2)d}=b^{(d-2)d}=c^{d-2}=1,
\]
$\mr{ord}(g)\ \mr{divides}\ (d-2)d$.

We assume that $j=2$ and $k=0$.
By the equation $\eqref{casesiv,v,vi,u}$, we have 
\[
t=ba^{d-1}=cb^{d-1}=ac^{d-1}.
\]
By Lemma~\ref{casevi,1},
\[
ba^{d-1}=cb^{d-1}=ac^{d-1}=
\left\{
\begin{aligned}
&a\quad(\mr{case}~(i)\ \mr{of\ Lemma}~\ref{casevi,1}),\\
&b\quad(\mr{case}~(ii)\ \mr{of\ Lemma}~\ref{casevi,1}),\\
&c\quad(\mr{case}~(iii)\ \mr{of\ Lemma}~\ref{casevi,1}).
\end{aligned}
\right.
\]
In case $(i)$, we have 
$ba^{d-2}=c^{d-1}=1$ and $cb^{d-1}=a$.
By substituting \(ba^{d-2}=1\), i.e. $b=a^{2-d}$ into \(cb^{d-1}=a\), we get $c=a^{d^2-3d+3}$.
By substituting \(c=a^{d^2-3d+3}\) into \(c^{d-1}=1\), we get $a^{(d-1)(d^2-3d+3)}=1$.
Then 
\[
a^{(d-1)(d^2-3d+3)}=1,\quad b=a^{2-d},\quad  c=a^{d^2-3d+3},
\]
$\mr{ord}(g)\ \mr{divides}\ (d^2-3d+3)(d-1)$.
Case~$(ii)$ (resp. $(iii)$) is reduced to case~$(i)$ by applying the cyclic permutation $(X_0, X_1, X_2) \mapsto (X_1, X_2, X_0)$ (resp. $(X_2,X_0,X_1)$).
Hence, $\mr{ord}(g)$ divides $(d^2-3d+3)(d-1)$.

The case where $(j,k)=(2,1)$ is reduced to the case where $(j,k)=(0,0)$ by applying the cyclic permutation $(X_0, X_1, X_2) \mapsto (X_2, X_0, X_1)$.
Hence, $\mr{ord}(g)$ divides $(d-2)(d-1)^2$.

We assume that $j=2$ and $k\not\in\{0,1\}$.
By the equation $\eqref{casesiv,v,vi,u}$, we have 
\[
t=ba^{d-1}=cb^{d-1}=c^{d-1}.
\]
By Lemma~\ref{casevi,1},
\[
ba^{d-1}=cb^{d-1}=c^{d-1}=
\left\{
\begin{aligned}
&a\quad(\mr{case}~(i)\ \mr{of\ Lemma}~\ref{casevi,1}),\\
&b\quad(\mr{case}~(ii)\ \mr{of\ Lemma}~\ref{casevi,1}),\\
&c\quad(\mr{case}~(iii)\ \mr{of\ Lemma}~\ref{casevi,1}).
\end{aligned}
\right.
\]
In case $(i)$, we have 
$ba^{d-2}=1$ and $cb^{d-1}=c^{d-1}=a$.
By substituting \(a=c^{d-1}\) into \(ba^{d-2}=1\), we get $b=c^{-(d-2)(d-1)}$.
By substituting \(a=c^{d-1}\) and $b=c^{-(d-2)(d-1)}$ into \(cb^{d-1}=a\), we get $c^{(d-1)(d^2-3d+3)}=1$.
Since
\[
a=c^{d-1},\quad b=c^{-(d-2)(d-1)},\quad \mr{and}\quad c^{(d-1)(d^2-3d+3)}=1,
\]
$\mr{ord}(g)\ \mr{divides}\ (d-1)(d^2-3d+3)$.
In case $(ii)$, we have 
$a^{d-1}=cb^{d-2}=1$ and $c^{d-1}=b$.
By substituting \(c=b^{2-d}\) into \(b=c^{d-1}\), we get $b^{d^2-3d+3}=1$.
Then 
\[
a^{d-1}=b^{d^2-3d+3}=1,\quad c=b^{2-d},\ \mr{and}\ \mr{ord}(g)\ \mr{divides}\ (d-1)(d^2-3d+3).
\]
In case $(iii)$, we have $ba^{d-1}=c$ and $b^{d-1}=c^{d-2}=1$.
By substituting $b^{d-1}=c^{d-2}=1$ into $b^{(d-2)(d-1)}a^{(d-2)(d-1)^2}=c^{(d-2)(d-1)}$, we have $a^{(d-2)(d-1)^2}=1$.
Since
\[
a^{(d-2)(d-1)^2}=b^{d-1}=c^{d-2}=1,
\]
$\mr{ord}(g)\ \mr{divides}\ (d-2)(d-1)^2$.

The case where $j\not\in\{0,2\}$ and $k=0$ is reduced to the case where $j=2$ and $k\not\in\{0,1\}$ by applying the cyclic permutation $(X_0, X_1, X_2) \mapsto (X_1, X_2, X_0)$.
Hence, $\mr{ord}(g)$ divides $(d-2)(d-1)^2$ or $(d-1)(d^2-3d+3)$.

We assume that $j\not\in\{0,2\}$ and $k=1$.
By the equation $\eqref{casesiv,v,vi,u}$, we have 
\[
t=ba^{d-1}=b^{d-1}=bc^{d-1}.
\]
By Lemma~\ref{casevi,1},
\[
ba^{d-1}=b^{d-1}=bc^{d-1}=
\left\{
\begin{aligned}
&a\quad(\mr{case}~(i)\ \mr{of\ Lemma}~\ref{casevi,1}),\\
&b\quad(\mr{case}~(ii)\ \mr{of\ Lemma}~\ref{casevi,1}),\\
&c\quad(\mr{case}~(iii)\ \mr{of\ Lemma}~\ref{casevi,1}).
\end{aligned}
\right.
\]
In case $(i)$, we have 
$ba^{d-2}=1$ and $b^{d-1}=bc^{d-1}=a$.
By substituting \(ba^{d-2}=1\) into \(b^{d-1}=a\), we get $a^{d^2-3d+3}=1$.
Since $ba^{d-2}=1$ and $\gcd(d-2,d^2-3d+3)=1$, we have $b^{d^2-3d+3}=1$.
Since $a^{d^2-3d+3}=b^{d^2-3d+3}=1$ and $bc^{d-1}=a$, we obtain $c^{(d-1)(d^2-3d+3)}=1$.
Since
\[
a^{d^2-3d+3}=b^{d^2-3d+3}=c^{(d-1)(d^2-3d+3)}=1,\]
$\mr{ord}(g)\ \mr{divides}\ (d-1)(d^2-3d+3)$.
In case $(ii)$, we have 
\[
a^{d-1}=b^{d-2}=c^{d-1}=1.
\]
Thus, $\mr{ord}(g)\ \mr{divides}\ (d-2)(d-1)$.
Case~$(iii)$ is reduced to case~$(i)$ by interchanging \(X_{0}\) and \(X_{2}\).
Hence, $\mr{ord}(g)$ divides $(d-1)(d^2-3d+3)$.

We assume that $j\not\in\{0,2\}$ and $k\not\in\{0,1\}$.
By the equation $\eqref{casesiv,v,vi,u}$, we have 
\[
t=ba^{d-1}=b^{d-1}=c^{d-1}.
\]
By Lemma~\ref{casevi,1},
\[
ba^{d-1}=b^{d-1}=c^{d-1}=
\left\{
\begin{aligned}
&a\quad(\mr{case}~(i)\ \mr{of\ Lemma}~\ref{casevi,1}),\\
&b\quad(\mr{case}~(ii)\ \mr{of\ Lemma}~\ref{casevi,1}),\\
&c\quad(\mr{case}~(iii)\ \mr{of\ Lemma}~\ref{casevi,1}).
\end{aligned}
\right.
\]
In case $(i)$, we have $ba^{d-2}=1$ and $b^{d-1}=c^{d-1}=a$.
By substituting \(b^{d-1}=a\) into \(ba^{d-2}=1\), we get $b^{d^2-3d+3}=1$.
Since \(b^{d-1}=a\), we obtain $a^{(d^2-3d+3)(d-1)}$.
Since
\[
a=b^{d-1}=c^{d-1}\quad \mr{and}\quad a^{(d^2-3d+3)(d-1)}=1,
\]
$\mr{ord}(g)\ \mr{divides}\ (d^2-3d+3)(d-1)$.
In case $(ii)$, we have 
$a^{d-1}=b^{d-2}=1$ and $c^{d-1}=b$.
By substituting \(c^{d-1}=b\) into \(b^{d-2}=1\), we get $c^{(d-2)(d-1)}=1$.
Since
\[
a^{d-1}=c^{(d-2)(d-1)}=1\quad \mr{and}\quad b=c^{d-1},
\]
$\mr{ord}(g)\ \mr{divides}\ (d-2)(d-1)$.
In case $(iii)$, we have 
$ba^{d-1}=b^{d-1}=c$ and $c^{d-2}=1$.
By substituting \(b^{d-1}=c\) into \(c^{d-2}=1\), we get $b^{(d-2)(d-1)}=1$.
By substituting \(b^{d-1}=c\) into $ba^{d-1}=c$, we get $a^{d-1}=b^{d-2}$.
Since $b^{(d-2)(d-1)}=1$, we have $a^{(d-1)^2}=1$.
Since
\[
a^{(d-1)^2}=b^{(d-2)(d-1)}=1\quad \mr{and}\quad c=b^{d-1},
\]
$\mr{ord}(g)\ \mr{divides}\ (d-2)(d-1)^2$.

We assume that \(i\notin\{1,2\}\), \(j\notin\{0,2\}\), and \(k\notin\{0,1\}\).
By the equation $\eqref{casesiv,v,vi,u}$, we have 
\[
t=a^{d-1}=b^{d-1}=c^{d-1}.
\]
By Lemma~\ref{casevi,1},
\[
a^{d-1}=b^{d-1}=c^{d-1}=
\left\{
\begin{aligned}
&a\quad(\mr{case}~(i)\ \mr{of\ Lemma}~\ref{casevi,1}),\\
&b\quad(\mr{case}~(ii)\ \mr{of\ Lemma}~\ref{casevi,1}),\\
&c\quad(\mr{case}~(iii)\ \mr{of\ Lemma}~\ref{casevi,1}).
\end{aligned}
\right.
\]
In case $(i)$, we have $a^{d-2}=1$ and $b^{d-1}=c^{d-1}=a$.
By substituting $b^{d-1}=a$ (resp. $c^{d-1}=a$) into $a^{d-2}=1$, we get $b^{(d-2)(d-1)}=1$ (resp. $c^{(d-2)(d-1)}=1$).
\[
a=b^{d-1}=c^{d-1}\quad \mr{and}\quad b^{(d-2)(d-1)}=c^{(d-2)(d-1)}=1,
\]
$\mr{ord}(g)\ \mr{divides}\ (d-2)(d-1)$.
Case~$(ii)$ (resp. $(iii)$) is reduced to case~$(i)$ by interchanging \(X_{0}\) and \(X_{1}\) (resp. $X_0$ and $X_2$).
Hence, $\mr{ord}(g)$ divides $(d-2)(d-1)$.
\end{proof}

\section*{Funding}
This research was supported by JSPS KAKENHI Grant Number 23K12507.

\section*{Declarations}
\textbf{Conflict of interest} The authors declared that they have no conflict of interest.


\begin{thebibliography}{99}
\bibitem{ha}\label{bio:acgh}
E. Arbarello, M. Cornalba, P.A. Griffiths, J. Harris, Geometry of algebraic curves, vol. I. Grundlehren der Mathematischen Wissenschaften 267. Springer, New York (1985).

\bibitem{ha}\label{bio:bb16n}
E. Badr and F. Bars, Non-singular plane curves with an element of ``large'' order in its automorphism group, Int. J. Algebra Comput. 26 (2016), 399--434.

\bibitem{ha}\label{bio:bb16a}
E.~E. Badr and F. Bars~Cortina, Automorphism groups of nonsingular plane curves of degree 5, Comm. Algebra {\bf 44} (2016), no.~10, 4327--4340.




\bibitem{ha}\label{bio:bc25}
E.~E. Badr and F. Bars~Cortina, The stratification by automorphism groups of smooth plane sextic curves, Ann. Mat. Pura Appl. (4) {\bf 204} (2025), no.~5, 2005--2048.

\bibitem{ha}\label{bio:fm14}
S. Fukasawa and K. Miura, Galois points for a plane curve and its dual curve, Rend. Semin. Mat. Univ. Padova {\bf 132} (2014), 61--74.

\bibitem{ha}\label{bio:gl11}
V. Gonz\'alez-Aguilera and A. Liendo, Automorphisms of prime order of smooth cubic $n$-folds, Arch. Math. (Basel) {\bf 97} (2011), no.~1, 25--37.

\bibitem{ha}\label{bio:gl13}
V. Gonz\'alez-Aguilera and A. Liendo, On the order of an automorphism of a smooth hypersurface, Israel J. Math. {\bf 197} (2013), no.~1, 29--49.

\bibitem{ha}\label{bio:th21l}
T. Hayashi, Linear automorphisms of smooth hypersurfaces giving Galois points, Bull. Korean Math. Soc. {\bf 58} (2021), no.~3, 617--635.

\bibitem{ha}\label{bio:th21o}
T. Hayashi, Orders of automorphisms of smooth plane curves for the automorphism groups to be cyclic, Arab. J. Math. (Springer) {\bf 10} (2021), no.~2, 409--422.

\bibitem{ha}\label{bio:th21s}
T. Hayashi, Smooth plane curves with freely acting finite groups, Vietnam J. Math. {\bf 49} (2021), no.~4, 1027--1036.

\bibitem{ha}\label{bio:th23g}
T. Hayashi, Galois covers of the projective line by smooth plane curves of large degree, Beitr. Algebra Geom. {\bf 64} (2023), no.~2, 311--365.

\bibitem{ha}\label{bio:th25g}
T. Hayashi, Galois skew lines of smooth surfaces in $\Bbb{P}^3$, Comm. Algebra {\bf 53} (2025), no.~1, 436--449.

\bibitem{ha}\label{bio:th25a}
T. Hayashi, Abelian automorphism groups of smooth hypersurfaces with smooth quotient, Beitr. Algebra Geom. {\bf 66} (2025), no.~4, 941--983.

\bibitem{ha}\label{bio:hs25l}
T. Hayashi and K. Shimahara, Large orders of automorphisms of smooth curves in $\mathbb P^1\times \mathbb P^1$, preprint arXiv:2512.14948 (2025).

\bibitem{ha}\label{bio:mm63}
H. Matsumura and P. Monsky, On the automorphisms of hypersurfaces, J. Math. Kyoto Univ. 3 (1963/1964), 347$-$361.

\bibitem{ha}\label{bio:my00}
K. Miura and H. Yoshihara, Field theory for function fields of plane quartic curves, J. Algebra, 226 (2000), 283$-$294.

\bibitem{ha}\label{bio:ou19}
K. Oguiso and X. Yu, Automorphism groups of smooth quintic threefolds, Asian J. Math. {\bf 23} (2019), no.~2, 201--256.

\bibitem{ha}\label{bio:wu20}
L. Wei and X. Yu, Automorphism groups of smooth cubic threefolds, J. Math. Soc. Japan {\bf 72} (2020), no.~4, 1327--1343.

\bibitem{ha}\label{bio:yz24}
S. Yang, X. Yu and Z. Zhu, Automorphism groups of cubic fivefolds and fourfolds, J. Lond. Math. Soc. (2) {\bf 110} (2024), no.~4, Paper No. e12997, 35 pp..


\bibitem{ha}\label{bio:y03}
H. Yoshihara, Galois points for smooth hypersurfaces, J. Algebra {\bf 264} (2003), no.~2, 520--534.

\bibitem{ha}\label{bio:z22}
Z. Zheng, On abelian automorphism groups of hypersurfaces, Israel J. Math. {\bf 247} (2022), no.~1, 479--498.

\end{thebibliography}
\end{document}